\newtheorem{definition}{Definition}[section]
\newtheorem{theorem}[definition]{Theorem}
\newtheorem{lemma}[definition]{Lemma}
\newtheorem{corollary}[definition]{Corollary}
\newtheorem{proposition}[definition]{Proposition}
\theoremstyle{definition}
\newcommand\style{\mathcal }          
\newcommand{\B}{\style{B}}
\newcommand{\M}{\style{M}}
\newcommand\A{{\style A}}
\renewcommand{\H}{\style{H}}
\newcommand{\K}{\style K}
\newcommand\osr{{\style R}}
\newcommand\oss{{\style S}}
\newcommand\ost{{\style T}}
\newcommand\omin{\otimes_{\rm min}}
\newcommand\omax{\otimes_{\rm max}}
\newcommand\oc{\otimes_{\rm c}}
\newcommand\coisubset{\subseteq_{\rm coi}}   
\newcommand\cstar{{\rm C}^*}                              
\newcommand\cstare{{\rm C}_{\rm e}^*}              
\newcommand\cstaru{{\rm C}_{\rm u}^*}              
\begin{document}

\title[The Operator System of Toeplitz Matrices]{The Operator System of Toeplitz Matrices}

\author[Douglas Farenick]{Douglas Farenick}
\address{Department of Mathematics \& Statistics, University of Regina, Regina, Saskatchewan S4S 0A2, Canada}
\curraddr{}
\email{douglas.farenick@uregina.ca}
\thanks{Supported in part by the NSERC Discovery Grant program}
 
\subjclass[2020]{46L07, 47L05}

\date{}

\dedicatory{}

\begin{abstract}
A recent paper of A.~Connes and W.D.~van Suijlekom \cite{connes-vansuijlekom2020} identifies the 
operator system of $n\times n$ Toeplitz matrices with the dual of the space of all trigonometric polynomials of degree less
than $n$. The present paper examines this identification in somewhat more detail by showing explicitly that the 
Connes--van Suijlekom isomorphism is a unital complete order isomorphism of operator systems. Applications include 
two special results in matrix analysis: (i) that every positive linear map
of the $n\times n$ complex matrices is completely positive when restricted to the operator subsystem of Toeplitz matrices and
(ii) that every linear unital isometry of the $n\times n$ Toeplitz matrices into the algebra of all $n\times n$ complex matrices is 
a unitary similarity transformation. 

An operator systems approach to Toeplitz matrices yields new insights into the positivity of block Toeplitz matrices,
which are viewed herein as elements of tensor product spaces of an arbitrary operator system with the operator
system of $n\times n$ complex Toeplitz matrices. In particular, it is shown that min and max positivity are distinct if the blocks 
themselves are Toeplitz matrices, and that the maximally entangled Toeplitz matrix $\xi_n$ generates an extremal ray
in the cone of all continuous $n\times n$ Toeplitz-matrix valued functions $f$ on the unit circle $S^1$ whose Fourier coefficients $\hat f(k)$
vanish for $|k|\geq n$. Lastly, it is noted that all positive Toeplitz matrices over nuclear C$^*$-algebras are approximately separable.
\end{abstract}

\maketitle
 
\section{Introduction}
Toeplitz operators and matrices are among the most intensively studied and best understood 
of all classes of Hilbert space operators;
in this paper, they are considered from the perspective of the unital selfadjoint linear subspaces they generate. 
These subspaces of matrices and operators are concrete instances of \emph{operator systems}, 
which in the abstract refer to matrix-ordered involutive complex vector spaces possessing an 
Archimedean order unit \cite{choi--effros1977}.

In addition to classical Toeplitz matrices, this paper considers block Toeplitz matrices, 
which are 
matrices $x$ of the form
\begin{equation}\label{tms-intro}
x 
=\left[ \begin{array}{cccccc} 
s_0 & s_{-1} & s_{-2} &  \dots&  s_{-n+2}& s_{-n+1} \\
s_1 & s_0 & s_{-1} & s_{-2}& \dots & s_{-n+2} \\
s_2 & s_1 & s_0 & s_{-1} &\ddots&   \vdots\\
\vdots & \ddots & \ddots & \ddots &\ddots &s_{-2} \\
s_{n-2} &   &  \ddots & \ddots &\ddots & s_{-1} \\
s_{n-1} & s_{n-2} & \dots & s_{2} &s_{1}&s_0
\end{array}
\right],
\end{equation}
for some $s_{-n+1}, \dots,s_{n-1}$ in an operator system $\oss$. The present 
paper considers such matrices $x$ when $\oss$ is an operator system, and
addresses the issue of positivity for these Toeplitz matrices, particularly in the cases 
$\oss=\M_m(\mathbb C)$, the C$^*$-algebra
of $m\times m$ complex matrices, and $\oss=C(S^1)^{(m)}$, the operator system of $m\times m$
complex Toeplitz matrices.

The work in this paper is strongly motivated by recent results of Connes and van Suijlekom
\cite{connes-vansuijlekom2020} which, among other things, 
identify the operator system of Toeplitz matrices with the
dual space of a function system of trigonometric polynomials.
To explain the contributions of the present paper
and set the notation, 
let $C(S^1)$ denote the unital abelian C$^*$-algebra of all continuous functions $f:S^1\rightarrow \mathbb C$,
where $S^1\subset\mathbb C$ is the unit circle. For each $n\in\mathbb N$, let 
$C(S^1)_{(n)}$ denote the vector space of those $f\in C(S^1)$ for which the Fourier coefficients $\hat f(k)$ of
$f$ satisfy $\hat f(k)=0$ for every $k\in\mathbb Z$ such that $|k|\geq n$. Thus, every $f\in C(S^1)_{(n)}$ is given by
\[
f(z)=\sum_{k=-n+1}^{n-1}\alpha_kz^k ,
\]
as a function of $z\in S^1$, 
where each $\alpha_k =\frac{1}{2\pi}\displaystyle\int_0^{2\pi}f(e^{i\theta})e^{-ik\theta}\,d\theta$. 
The vector space $C(S^1)_{(n)}$ is an 
operator system via the matrix ordering that arises from the identification
of $\M_p\left( C(S^1)_{(n)}\right)$, the space of $p\times p$ matrices with entries from $C(S^1)_{(n)}$, with the space
of continuous functions $F:S^1\rightarrow\M_p(\mathbb C)$, and where the Archimedean order unit is the canonical
one (namely, the constant function $\chi_0:S^1\rightarrow \mathbb C$ given by $\chi_0(z)=1$, for $z\in S^1$).

The operator system of all $n\times n$ Toeplitz matrices over $\mathbb C$
is denoted by $C(S^1)^{(n)}$; the identity matrix in $\M_{n}(\mathbb C)$ is the canonical
Archimedean order unit for $C(S^1)^{(n)}$.

As explained in \cite{choi--effros1977}, if
$\osr$ is an operator system and $\osr^d$ denotes its dual space, then $\osr^d$ is a matrix-ordered $*$-vector space.
Specifically, a matrix $\Phi=[\varphi_{ij}]_{i,j=1}^p$ of linear functionals $\varphi_{ij}:\osr\rightarrow\mathbb C$ is considered
positive whenever the linear map $r\mapsto [\varphi_{ij}(r)]_{i,j=1}^p$ is a completely positive linear map of 
$\osr$ into the algebra $\M_p(\mathbb C)$. Furthermore, if
$\phi:\osr\rightarrow\oss$ is a linear map of operator systems and $\phi^d:\oss^d\rightarrow\osr^d$ denotes the
adjoint transformation as linear mapping of matrix-ordered $*$-vector spaces, then 
$\phi$ is positive if and only if $\phi^d$ is positive. Likewise,
$\phi\otimes\mbox{\rm id}_{\M_p(\mathbb C)}$ is positive if and only if 
$\phi^d\otimes\mbox{\rm id}_{\M_p(\mathbb C)}$ is positive, for $p\in\mathbb N$.

If an operator system $\osr$ has finite dimension, then any faithful state $\varphi$ on $\osr$ serves an Archimedean order unit
for the matrix-ordered $*$-vector space $\osr^d$, thereby giving $\osr^d$ the structure of an operator system. 
Because the linear functional $\mathfrak e_{(n)}:C(S^1)_{(n)}\rightarrow\mathbb C$ given by
\[
\mathfrak e_{(n)}[f]=\hat f(0)=\frac{1}{2\pi}\int_0^{2\pi}f(e^{i\theta})\,d\theta
\]
is a faithful state, we shall henceforth designate $\mathfrak e_{(n)}$ as the Archimedean order unit for the operator
system dual of $C(S^1)_{(n)}$.

The category $\mathcal O_1$ has as its objects operator systems, and as its morphisms unital completely positive linear maps.
Therefore, an isomorphism in this category is a unital completely positive linear map $\phi:\osr\rightarrow\oss$ between operator systems 
$\osr$ and $\oss$ such that $\phi$ is a linear bijection and both $\phi$ and $\phi^{-1}$ are completely positive. (The complete positivity
of a linear bijection $\phi$ is not sufficient to imply the complete positivity of its inverse $\phi^{-1}$.) Such a linear isomorphism is called 
a \emph{unital complete order isomorphism} and we denote the existence of such an isomorphism between operator systems 
$\osr$ and $\oss$ with the notation
\[
\osr 
\simeq
\oss.
\]
This notation above has its own ambiguity, as explicit reference to the Archimedean order units of $\osr$ and $\oss$
is not made. In this paper, whenever we are speaking of specific operator systems $\osr$ and $\oss$ that have had
Archimedean order units $e_\osr$ and $e_\oss$ explicitly
designated, then $\osr\simeq \oss$ implies that there is a complete order isomorphism between these
operator systems that sends $e_\osr$ to $e_\oss$.  

The main result of this paper is the following isomorphism theorem in the operator system category; the remaining results
in this paper are derived from this isomorphism.

\begin{theorem}\label{main result} $C(S^1)^{(n)}\simeq \left(C(S^1)_{(n)}\right)^d$  for every $n\in\mathbb N$.
\end{theorem}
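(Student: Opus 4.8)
The plan is to write the natural duality pairing as an explicit map $\Gamma$, check that it is a unital linear isomorphism, and then prove separately that $\Gamma$ and $\Gamma^{-1}$ are completely positive (the two directions need genuinely different arguments). Define $\Gamma\colon C(S^1)^{(n)}\to\bigl(C(S^1)_{(n)}\bigr)^d$ by sending a Toeplitz matrix $T$ whose $k$-th diagonal carries the constant $s_k$ (so $T_{ij}=s_{i-j}$) to the functional $\varphi_T$ on $C(S^1)_{(n)}$ with $\varphi_T(z^m)=s_{-m}$ for $|m|<n$; equivalently, with $\sigma_T(z)=\sum_{|k|<n}s_kz^k$ the symbol of $T$, $\varphi_T(f)=\widehat{f\sigma_T}(0)=\frac1{2\pi}\int_0^{2\pi}f(e^{i\theta})\sigma_T(e^{i\theta})\,d\theta$. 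Since $\dim C(S^1)^{(n)}=2n-1=\dim\bigl(C(S^1)_{(n)}\bigr)^d$ and $\varphi_T=0$ forces every $s_k=0$, the map $\Gamma$ is a linear bijection; and because $\sigma_{I_n}\equiv 1$ we get $\varphi_{I_n}=\mathfrak e_{(n)}$, so $\Gamma$ is unital. It then suffices to show, for every $p$, that $\Gamma\otimes\mathrm{id}_{\M_p}$ and its inverse preserve positivity.

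For $\Gamma^{-1}$ this is direct. By the definition of the dual operator system recalled above, a positive element of $\M_p\bigl((C(S^1)_{(n)})^d\bigr)$ is exactly a completely positive map $\Phi\colon C(S^1)_{(n)}\to\M_p$ (identifying a matrix of functionals with its associated map), and unwinding the definition of $\Gamma$ shows that $(\mathrm{id}_{\M_p}\otimes\Gamma^{-1})(\Phi)$, viewed as an $n\times n$ block matrix over $\M_p$, has $(a,b)$-block $\Phi(z^{b-a})$; that is, it equals $(\mathrm{id}_{\M_n}\otimes\Phi)(R)$, where $R\in\M_n\bigl(C(S^1)_{(n)}\bigr)$ is the matrix-valued trigonometric polynomial with $R(z)=[z^{b-a}]_{a,b}$. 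But $R(z)=u(z)u(z)^{*}$ with $u(z)=(1,\bar z,\dots,\bar z^{\,n-1})^{\mathsf{T}}$, so $R$ is a positive element of $\M_n\bigl(C(S^1)_{(n)}\bigr)$; complete positivity of $\Phi$ then forces $(\mathrm{id}_{\M_n}\otimes\Phi)(R)\ge 0$, which is precisely the positivity of $(\mathrm{id}_{\M_p}\otimes\Gamma^{-1})(\Phi)$ in $\M_p\bigl(C(S^1)^{(n)}\bigr)$. Hence $\Gamma^{-1}$ is completely positive.

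The main work is the complete positivity of $\Gamma$. Fix $X\in\M_p\bigl(C(S^1)^{(n)}\bigr)_+$; viewed as a block matrix over $\M_p$ it is a positive block-Toeplitz matrix whose $k$-th diagonal carries the block $s_k$, and $(\Gamma\otimes\mathrm{id}_{\M_p})(X)$ corresponds to the map $\Phi_X\colon C(S^1)_{(n)}\to\M_p$ with $\Phi_X(z^m)=s_{-m}$. To see that $\Phi_X$ is completely positive, take $q\in\mathbb N$ and $F\in\M_q\bigl(C(S^1)_{(n)}\bigr)_+$, i.e.\ a pointwise-positive $\M_q$-valued trigonometric polynomial of degree $<n$; by the matrix Fej\'er--Riesz theorem, $F(z)=G(z)^{*}G(z)$ for a matrix polynomial $G(z)=\sum_{l=0}^{n-1}g_lz^l$ with $g_l\in\M_q$. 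Comparing Fourier coefficients yields
\[
(\mathrm{id}_{\M_q}\otimes\Phi_X)(F)=\sum_{l,l'=0}^{n-1}(g_l^{*}g_{l'})\otimes s_{\,l-l'}=(\theta\otimes\mathrm{id}_{\M_p})(X),
\]
where $\theta\colon\M_n\to\M_q$ is the completely positive map $\theta(A)=V^{*}(A\otimes 1_q)V$ built from $V\colon\mathbb C^q\to\mathbb C^n\otimes\mathbb C^q$, $V\eta=\sum_l e_l\otimes g_l\eta$, so that $\theta(e_{ll'})=g_l^{*}g_{l'}$. Since $X\ge 0$ and $\theta\otimes\mathrm{id}_{\M_p}$ is completely positive, $(\mathrm{id}_{\M_q}\otimes\Phi_X)(F)\ge 0$; as $q$ and $F$ were arbitrary, $\Phi_X$ is completely positive, so $\Gamma$ is completely positive. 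Consequently $\Gamma$ is a unital complete order isomorphism carrying $I_n$ to $\mathfrak e_{(n)}$, which is the asserted isomorphism $C(S^1)^{(n)}\simeq\bigl(C(S^1)_{(n)}\bigr)^d$.

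I expect the matrix Fej\'er--Riesz factorization to be the real obstacle. Positivity of $\Phi_X$ at a single matrix level is strictly weaker than positivity of the block-Toeplitz matrix $X$, so the passage from the latter to complete positivity of $\Phi_X$ must invoke something substantive, and it is precisely the factorization of a positive degree-$<n$ matrix trigonometric polynomial into a degree-$<n$ product that lets the compression--amplification map $\theta$ transfer positivity of $X$ to positivity of $(\mathrm{id}\otimes\Phi_X)(F)$. The remaining steps — reshuffling between $\M_p\bigl(C(S^1)^{(n)}\bigr)$ and block-Toeplitz matrices over $\M_p$, and keeping track of signs and index ranges in the Fourier computations — are routine but must be done carefully enough that $\Gamma$ comes out unital.
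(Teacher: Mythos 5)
Your proof is correct, and it splits into one half that mirrors the paper and one half that is genuinely different. For the complete positivity of $\Gamma$ you use exactly the paper's key ingredient, the matrix-valued Fej\'er--Riesz factorization $F=G^*G$ with $\deg G<n$; the only difference is packaging. The paper runs an explicit inner-product computation with the Schur--Hadamard isometry $v$ and the vectors $\eta_i=(1_p\otimes b_i)\xi$, whereas you compress through the single completely positive map $\theta(A)=V^*(A\otimes 1_q)V$ with $\theta(e_{ll'})=g_l^*g_{l'}$, which makes the identity $(\mathrm{id}_{\M_q}\otimes\Phi_X)(F)=(\theta\otimes\mathrm{id}_{\M_p})(X)$ do all the work in one line; this is cleaner but mathematically the same mechanism. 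For the complete positivity of $\Gamma^{-1}$ your route diverges from the paper's: the paper first proves mere positivity of $\phi^{-1}$ by a Krein--Milman argument (pure states of $C(S^1)_{(n)}$ extend to point evaluations on $C(S^1)$, whose preimages are the rank-one Toeplitz matrices $\Lambda$), and then upgrades to complete positivity by dualizing and invoking the fact that positive maps into the abelian C$^*$-envelope $C(S^1)$ are automatically completely positive (Lemmas \ref{cp1}--\ref{abelian dual}). You instead observe that a positive element of $\M_p\bigl((C(S^1)_{(n)})^d\bigr)$ is by definition a completely positive $\Phi:C(S^1)_{(n)}\to\M_p$, and that $\Gamma^{-1}(\Phi)$ is literally $(\mathrm{id}_{\M_n}\otimes\Phi)(R)$ for the single positive element $R(z)=u(z)u(z)^*\in\M_n\bigl(C(S^1)_{(n)}\bigr)_+$ with $u(z)=(1,\bar z,\dots,\bar z^{n-1})^{\mathsf T}$. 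This handles all matrix levels at once, avoids both the extreme-point argument and the duality/abelian-envelope lemma, and is arguably the more transparent of the two arguments; its only cost is that it leans directly on the definition of the dual matrix ordering rather than on structural facts about $C(S^1)$. Your closing remark correctly identifies the Fej\'er--Riesz factorization as the substantive input: single-level positivity of $\Phi_X$ is indeed strictly weaker than positivity of the block-Toeplitz matrix $X$ (this is the content of Theorem \ref{auto cp}), and the degree-$<n$ analytic factorization is precisely what bridges the gap.
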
 

Theorem \ref{main result}, as stated above, 
was proved in \cite[Theorem 4.5]{farenick--paulsen2012} for $n=2$ using an approach rather
different from the approach of the present paper. In that paper, the operator system $C(S^1)_{(2)}$ arises as
$\oss_1$, the operator system generated by a universal unitary operator.
The approach in the present paper is inspired by and based upon an elegant argument of 
Connes and van Suijlekom \cite[Proposition 4.6]{connes-vansuijlekom2020}, which proves that there is a linear unital order
isomorphism $\phi$ between the operator system $C(S^1)^{(n)}$ and 
the operator system dual $\left(C(S^1)_{(n)}\right)^d$. The 
new contribution in Theorem \ref{main result} is the proof that this linear unital order
isomorphism $\phi$ is a complete order isomorphism.

Theorem \ref{main result} has a curious consequence for positive linear maps $\psi:\M_n(\mathbb C)
\rightarrow \M_m(\mathbb C)$. Recall that a positive linear map $\psi:\M_n(\mathbb C)\rightarrow \M_m(\mathbb C)$ is \emph{decomposable}
if it is the sum of completely positive and completely co-positive linear maps; that is, if there are $n\times m$
matrices $a_1,\dots,a_k$ and $b_1,\dots,b_\ell$
such that
\[
\psi(x)\,=\, \sum_{i=1}^k a_i^*x a_i \,+\, \sum_{j=1}^\ell b_j^*x^t b_j,
\]
where $x^t$ denotes the transpose map on $\M_n(\mathbb C)$ (which is positive but not completely positive). 
If $x$ is a Toeplitz matrix, then
there is a unitary $u$ independent of $x$ for which 
$x^t=u^*xu$ (namely, $u=\displaystyle\sum_{i=1}^ne_{i,n-i+1}$, 
where $e_{ij}$ is a matrix unit of $\M_n(\mathbb C)$).
Thus, the restriction of a decomposable positive linear map to the 
Toeplitz operator system $C(S^1)^{(n)}$ is completely positive. However, for all
$n\geq 3$ there exist indecomposable positive linear maps on $\M_n(\mathbb C)$
\cite{tanahashi--tomiyama1988}, which makes the following assertion somewhat unexpected.

\begin{theorem}\label{auto cp} The restriction of every positive linear map 
$\psi:\M_{n}(\mathbb C)\rightarrow \M_m(\mathbb C)$ to the operator
system $C(S^1)^{(n)}$ of Toeplitz matrices is completely positive.
\end{theorem}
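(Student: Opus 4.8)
The plan is to derive Theorem \ref{auto cp} from the isomorphism theorem (Theorem \ref{main result}) by transporting the restriction map through the operator system dual, where positivity and complete positivity become indistinguishable.

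First I would fix the complete order isomorphism $\Phi\colon\left(C(S^1)_{(n)}\right)^d\rightarrow C(S^1)^{(n)}$ supplied by Theorem \ref{main result}, together with the inclusion $\iota\colon C(S^1)^{(n)}\hookrightarrow\M_n(\mathbb C)$. Given a positive linear map $\psi\colon\M_n(\mathbb C)\rightarrow\M_m(\mathbb C)$, I would form
\[
\rho\,=\,\psi\circ\iota\circ\Phi\colon\left(C(S^1)_{(n)}\right)^d\longrightarrow\M_m(\mathbb C),
\]
which is positive because $\Phi$ and $\iota$ are unital completely positive and $\psi$ is positive. Since $\Phi^{-1}$ is completely positive and $\psi\vert_{C(S^1)^{(n)}}=\psi\circ\iota=\rho\circ\Phi^{-1}$, it would be enough to show that $\rho$ is completely positive.

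To that end I would pass to the adjoint transformation. As recalled in the introduction, a map of operator systems is completely positive precisely when its adjoint is, so $\rho$ is completely positive if and only if
\[
\rho^d\colon\M_m(\mathbb C)^d\longrightarrow\left(\left(C(S^1)_{(n)}\right)^d\right)^d
\]
is completely positive. Because $C(S^1)_{(n)}$ is finite dimensional, the canonical map into its second dual is a complete order isomorphism, so I would identify the codomain of $\rho^d$ with $C(S^1)_{(n)}$; under this identification $\rho^d$ is a positive linear map from the operator system $\M_m(\mathbb C)^d$ into $C(S^1)_{(n)}$. Now $C(S^1)_{(n)}$ is an operator subsystem of the commutative $\cstar$-algebra $C(S^1)$, and every positive linear map from an operator system into a commutative $\cstar$-algebra is automatically completely positive (each evaluation $\mathrm{ev}_z\circ\rho^d$ is a positive functional, hence completely positive, and positivity in $\M_p\left(C(S^1)\right)$ is checked pointwise). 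Hence $\rho^d$ is completely positive, so $\rho$ is completely positive, so $\psi\vert_{C(S^1)^{(n)}}$ is completely positive.

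The real content here is entirely in Theorem \ref{main result}: it is the \emph{complete} order isomorphism $C(S^1)^{(n)}\simeq\left(C(S^1)_{(n)}\right)^d$, rather than merely the unital order isomorphism of \cite[Proposition 4.6]{connes-vansuijlekom2020}, that lets the dual of the restriction map land in a function system. The remaining ingredients — reflexivity of finite-dimensional operator systems, the equivalence of complete positivity for a map and for its adjoint, and the automatic complete positivity of positive maps into abelian $\cstar$-algebras — are standard, so the only real obstacle is the isomorphism theorem itself, which we take as given. The points that still call for a little care are tracking the designated Archimedean order units through the chain of identifications, and observing that complete positivity of $\rho^d$ as a map into $C(S^1)_{(n)}$ coincides with complete positivity as a map into $C(S^1)$, since $C(S^1)_{(n)}$ carries the matrix order inherited from $C(S^1)$.
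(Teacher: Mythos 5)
Your argument is correct and is essentially the paper's own proof: the paper routes the same reasoning through Lemma \ref{abelian dual}, dualizing the restricted map so that it lands (via Theorem \ref{main result}) in the operator system $C(S^1)_{(n)}$, whose C$^*$-envelope $C(S^1)$ is abelian, and then invoking automatic complete positivity of positive maps into abelian C$^*$-algebras before dualizing back. Whether one precomposes with $\Phi$ before dualizing, as you do, or identifies the codomain $\left(C(S^1)^{(n)}\right)^d\simeq C(S^1)_{(n)}$ after dualizing, as the paper does, is only a bookkeeping difference.
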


Theorems \ref{main result} and \ref{auto cp} also lead to 
a new proof of the following theorem from \cite{farenick--mastnak--popov2016} concerning linear isometries
of operator systems of Toeplitz matrices.

\begin{theorem}\label{iso}
If $\phi:C(S^1)^{(n)}\rightarrow \M_{n}(\mathbb C)$ is a unital linear isometry, then there exists a unitary matrix $v$
such that $\phi(x)=v^*xv$, for every $x\in C(S^1)^{(n)}$.
\end{theorem}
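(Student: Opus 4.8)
The plan is to promote $\phi$ to a unital completely positive map, to extend it by Arveson's theorem to a unital completely positive map of $\M_n(\mathbb C)$ into itself, and then to prove that this extension is a $*$-automorphism; the theorem then follows because every $*$-automorphism of $\M_n(\mathbb C)$ is inner.

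First I would record the standard fact that a unital linear isometry between operator systems is automatically positive: for each state $f$ of $\M_n(\mathbb C)$ the functional $f\circ\phi$ is unital and of norm at most $1$ on $C(S^1)^{(n)}$, hence is a state, so $f(\phi(a))\in\mathbb R$ whenever $a=a^*$ and therefore $\phi$ is self-adjoint; and if $0\le a\le\unit$ then $\|2\phi(a)-\unit\|=\|\phi(2a-\unit)\|\le 1$, which forces $0\le\phi(a)\le\unit$. Next I would establish complete positivity of $\phi$: by Theorem~\ref{main result} a positive linear map $\phi:C(S^1)^{(n)}\to\M_m(\mathbb C)$ may be viewed as a positive linear map on $\bigl(C(S^1)_{(n)}\bigr)^d$, and its adjoint (positivity and complete positivity being preserved under passing to adjoints, as recalled in the Introduction) is then a positive linear map $\M_m(\mathbb C)\to C(S^1)_{(n)}$; since $C(S^1)_{(n)}$ is an operator subsystem of the commutative C$^*$-algebra $C(S^1)$, and every positive linear map into a commutative C$^*$-algebra is completely positive (compose with point evaluations, each of which is a positive, hence completely positive, functional), this adjoint is completely positive, and therefore so is $\phi$. (Applying this to the restriction of a positive map $\M_n(\mathbb C)\to\M_m(\mathbb C)$ recovers Theorem~\ref{auto cp}.) Thus $\phi$ is unital and completely positive, so Arveson's extension theorem provides a unital completely positive map $\widetilde\phi:\M_n(\mathbb C)\to\M_n(\mathbb C)$ with $\widetilde\phi\vert_{C(S^1)^{(n)}}=\phi$.

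The main work is to show that $\widetilde\phi$ is a $*$-automorphism of $\M_n(\mathbb C)$. The multiplicative domain of $\widetilde\phi$ is a unital C$^*$-subalgebra of $\M_n(\mathbb C)$, and since the truncated shift $S=\sum_{i=1}^{n-1}e_{i+1,i}\in C(S^1)^{(n)}$ generates $\M_n(\mathbb C)$ as a C$^*$-algebra, it suffices to prove that $S$ lies in the multiplicative domain, i.e.\ that $\widetilde\phi(S^*S)=\widetilde\phi(S)^*\widetilde\phi(S)$ and $\widetilde\phi(SS^*)=\widetilde\phi(S)\widetilde\phi(S)^*$. In each identity the inequality ``$\ge$'' is the Schwarz inequality for the unital completely positive map $\widetilde\phi$; the reverse inequality I would extract from the hypothesis that $\phi$ is isometric on $C(S^1)^{(n)}$ — so that $\|\widetilde\phi(x)\|=\|\phi(x)\|=\|x\|$ for every Toeplitz matrix $x$, and in particular $\|\widetilde\phi(S^{k})\|=\|S^{k}\|=1$ for $0\le k\le n-1$ — together with the algebraic relations $S^*S=\unit-e_{nn}$, $SS^*=\unit-e_{11}$, $(S^{n-1})^*S^{n-1}=e_{11}$ and $S^{n-1}(S^{n-1})^*=e_{nn}$ among the Toeplitz generators, which pin down the values of $\widetilde\phi$ on the diagonal matrix units $e_{11},\dots,e_{nn}$. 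Once $S$ is shown to lie in the multiplicative domain, $\widetilde\phi$ is a unital $*$-endomorphism of the simple algebra $\M_n(\mathbb C)$, hence a unital $*$-automorphism, hence of the form $x\mapsto v^*xv$ for some unitary $v\in\M_n(\mathbb C)$; restricting to $C(S^1)^{(n)}$ gives $\phi(x)=v^*xv$ for all $x\in C(S^1)^{(n)}$.

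I expect the reverse Schwarz inequality of the previous paragraph to be the main obstacle. Complete positivity of $\phi$ itself falls out formally from the Connes--van Suijlekom duality; but turning the bare isometry hypothesis into equality in the Schwarz inequality — equivalently, into the assertion that $\phi$ is a \emph{complete} isometry, whence a complete order embedding — is where the very particular way in which the operator system of Toeplitz matrices sits inside $\M_n(\mathbb C)$, and hence the full strength of Theorems~\ref{main result} and~\ref{auto cp}, must genuinely be used. (An alternative organisation of this last step, avoiding the multiplicative domain, is to prove directly that $\phi$ is a complete order isomorphism onto its range $\ose:=\phi\bigl(C(S^1)^{(n)}\bigr)$ and then to invoke the fact that $\M_n(\mathbb C)$, being simple and generated by $C(S^1)^{(n)}$, is the C$^*$-envelope of $C(S^1)^{(n)}$, so that $\phi$ extends to a $*$-isomorphism of $\M_n(\mathbb C)$; but this route still requires complete positivity of $\phi^{-1}$, which is the same obstacle in a different guise.)
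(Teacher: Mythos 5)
Your architecture (unital isometry $\Rightarrow$ positive $\Rightarrow$ completely positive via the duality of Theorem \ref{main result}, then an Arveson extension and a rigidity argument on $\M_n(\mathbb C)$) is the right one, and the first two stages are sound and essentially the paper's own: the paper derives positivity from the fact that a unital isometry preserves numerical ranges and that $x\in C(S^1)^{(n)}$ is positive iff $W(x)\subseteq[0,\infty)$, while your state-composition argument is an acceptable substitute; the automatic complete positivity is exactly Theorem \ref{acp proof}. The genuine gap is the step you yourself flag as ``the main obstacle'': you never prove that the shift $S$ lies in the multiplicative domain of the extension $\widetilde\phi$. The reverse Schwarz inequality does not follow from the data you propose to use. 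From $\|\widetilde\phi(S^{n-1})\|=1$ and $\widetilde\phi(S^{n-1})^*\widetilde\phi(S^{n-1})\le\widetilde\phi(e_{11})\le 1$ you learn only that $\widetilde\phi(e_{11})$ has $1$ in its spectrum, not that it is a rank-one projection; the operator norms $\|\widetilde\phi(S^{k})\|$ together with the relations $S^*S=1-e_{nn}$, $(S^{n-1})^*S^{n-1}=e_{11}$, etc., do not pin down the values $\widetilde\phi(e_{kk})$, and some strictly finer information about $\phi(S)$ is required. Your alternative route (complete order isomorphism onto the range plus rigidity of the C$^*$-envelope) founders on the same point, as you note.

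The paper supplies exactly this missing ingredient with Wu's numerical-range characterization of Jordan blocks \cite{wu1998}: since $\phi$ is a unital isometry, $W(\phi(s))=W(s)$, and Wu's theorem asserts that a contraction in $\M_n(\mathbb C)$ whose numerical range coincides with that of the $n\times n$ nilpotent Jordan block is unitarily equivalent to it; hence $\phi(s)=v^*sv$ for some unitary $v$ \emph{before} any multiplicativity is discussed. The map $y\mapsto v\phi(y)v^*$, extended to a ucp map of $\M_n(\mathbb C)$ by Arveson's extension theorem, then fixes the irreducible pair $s,s^*$, and Arveson's Boundary Theorem --- rather than a multiplicative-domain computation --- forces the extension to be the identity, giving $\phi(x)=v^*xv$ on all of $C(S^1)^{(n)}$. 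To complete your proof you must either import Wu's theorem (or an equivalent identification of $\phi(s)$ up to unitary equivalence) or find another genuine use of the isometry hypothesis; as written, the multiplicativity of $\widetilde\phi$ at $S$ is asserted, not established.
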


In \S\ref{S:otp}, to
address questions of matrix positivity for Toeplitz matrices $x$ over operator systems $\oss$, as in equation (\ref{tms-intro}),
operator system tensor products will have a role in giving meaning to various notions of positivity. All of these notions coincide
if the $s_k$ are assumed to be elements of $\M_m(\mathbb C)$ (or, indeed, of any nuclear C$^*$-algebra), but the notions become
distinct if the elements $s_k$ in the Toeplitz matrix $x$ are themselves Toeplitz matrices (Corollary \ref{min neq max}).

Lastly, there is a well-known relationship between Toeplitz operators
on the Hardy space of $S^1$ and essentially-bounded functions (symbols) on $S^1$; this relationship 
is addressed in \S\ref{S:to} for the symbol spaces
$C(S^1)_{(n)}$, giving rise to the identification in the operator system category of certain Toeplitz operators
as dual elements to finite Toeplitz matrices.

\section{Preliminaries}

\subsection{Terminology}
Throughout this paper, when referring to a \emph{positive} 
matrix or operator, what is meant is a selfadjoint operator with spectrum contained in the
halfline $[0,\infty)$; thus, ``positive'' is the same as ''positive semidefinite'' in this terminology.

The \emph{Schur-Hadamard product} (or entry-wise product)
of $d\times d $ matrices $a$ and $b$ is denoted by $a\circ b$ and is the matrix in which each $(i,j)$-entry of $a\circ b$ is given by the product of the corresponding
$(i,j)$-entries of $a$ and $b$. The Schur-Hadamard isometry is the linear map $v:\mathbb C^d\rightarrow\mathbb C^d\otimes\mathbb C^d$ that sends the $k$-th canonical
orthonormal basis vector $e_k$ of $\mathbb C^d$ to $e_k\otimes e_k\in \mathbb C^d\otimes\mathbb C^d$; this isometry has the property that $a\circ b=v^*(a\otimes b)v$,
implying that $a\otimes b$ is positive if $a$ and $b$ are positive.

The canonical matrix units for a full matrix algebra $\M_n(\mathbb C)$ shall be denoted, henceforth, by $e_{ij}$,
and the \emph{analytic shift matrix} in $\M_n(\mathbb C)$ is the lower-triangular
Toeplitz matrix $s$ given 
\[
s=\sum_{j=1}^{n-1}e_{j+1,j} = \left[\begin{array}{ccccc} 0&&&&  \\ 1&0&&&  \\ &1&0&&  \\ &&\ddots&\ddots& \\ &&&1&0 \end{array}\right].
\]

The algebra of bounded linear operators acting on a Hilbert space $\H$ is denoted by $\B(\H)$.
An operator $x\in\B(\H)$ is \emph{irreducible} if the commutant of $\{x,x^*\}$
is trivial (i.e., consists only of
scalar multiples of the identity operator). Likewise, an
operator subsystem $\oss\subseteq\B(\H)$ is irreducible if $\oss'$ (the commutant of $\oss$) is trivial.
Thus, the analytic Toeplitz matrix $s$ is irreducible, along with any operator subsystem of $\M_n(\mathbb C)$ that contains $s$.

\subsection{Operator systems}

Formally, an operator system is a triple $(\oss, \{\mathcal C_n\}_{n\in\mathbb N}, e_\oss)$ consisting of:
\begin{enumerate}
\item a complex $*$-vector space $\oss$;
\item a family $\{\mathcal C_n\}_{n\in\mathbb N}$ of proper cones $\mathcal C_n\subseteq\M_n(\oss)_{\rm sa}$ of selfadjoint matrices
with the properties that $\mathcal C_n\oplus\mathcal C_m\subseteq\mathcal C_{n+m}$ and $\gamma^*\mathcal C_n\gamma\subseteq\mathcal C_m$
for all $n,m\in\mathbb N$ and all linear transformations $\gamma:\mathbb C^m\rightarrow\mathbb C^n$; and
\item an Archimedean order unit $e_\oss$ for the ordered real vector space $\oss_{\rm sa}$.
\end{enumerate}
In concrete situations, operator systems arise as unital $*$-closed subspaces of unital C$^*$-algebras.

If $\oss$ and $\osr$ are operator systems with Archimedean order units $e_\oss$ and $e_\osr$, then $\oss$
is said to be an \emph{operator subsystem} of $\osr$ if $\oss\subseteq\osr$ and $e_\oss=e_\osr$. 
A unital completely positive linear map $\phi:\oss\rightarrow\ost$ is a \emph{unital complete order embedding}
if $\phi$ is a unital complete order isomorphism between
$\oss$ and the operator subsystem 
$\phi(\oss)$ of $\ost$. 

The embedding theorem of Choi and Effros \cite{choi--effros1977} states that every operator system $\osr$ is unitally 
completely order isomorphic to an
operator subsystem of $\B(\H)$, for some Hilbert space $\H$. Therefore, every operator system $\osr$ is capable of generating
a C$^*$-algebra. Two such C$^*$-algebras of note are the C$^*$-envelope, $\cstare(\osr)$, and the universal C$^*$-algebra, $\cstaru(\osr)$, which
satisfy, respectively, minimal and maximal universal properties \cite{hamana1979b,kirchberg--wassermann1998}.

The following result from the literature 
shows that C$^*$-envelope of an operator system can be nuclear, while the universal C$^*$-algebra
of the same operator system need not be exact.

\begin{theorem}\label{lit} For every $n\geq2$,
\begin{enumerate}
\item $\cstare ( C(S^1)^{(n)} )=\M_{n}(\mathbb C)$,
\item $\cstare ( C(S^1)_{(n)}  )= C(S^1)$, and 
\item neither $\cstaru ( C(S^1)^{(n)}  )$ nor $\cstaru ( C(S^1)_{(n)}  )$ is an exact C$^*$-algebra.
\end{enumerate}
\end{theorem}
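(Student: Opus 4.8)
The statement to prove is Theorem \ref{lit}, which collects three facts: the C$^*$-envelope of $C(S^1)^{(n)}$ is $\M_n(\mathbb C)$, the C$^*$-envelope of $C(S^1)_{(n)}$ is $C(S^1)$, and neither universal C$^*$-algebra is exact. Since the excerpt explicitly calls this a ``result from the literature,'' the proof plan is essentially one of \emph{attribution and assembly} rather than original argument: each piece is known, and the task is to identify the source and sketch the reasoning that connects it to the present notation.

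\medskip

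The plan is as follows. For part (1), I would invoke the boundary representation / Shilov boundary machinery for operator systems: since $C(S^1)^{(n)}$ contains the analytic shift matrix $s$, and $s$ together with the identity generates $\M_n(\mathbb C)$ as a C$^*$-algebra, it suffices to show that $\M_n(\mathbb C)$ is the \emph{smallest} such C$^*$-algebra, i.e. that the identity representation is a boundary representation. Irreducibility of $s$ (noted in the Preliminaries) gives that $\M_n(\mathbb C)$ is generated irreducibly; one then shows the identity map has the unique extension property, e.g. by an application of Arveson's criterion or by citing \cite{farenick--mastnak--popov2016} where this computation appears. For part (2), the operator system $C(S^1)_{(n)}$ sits inside $C(S^1)$ and contains the coordinate function $z$, which generates $C(S^1)$ as a unital C$^*$-algebra; since $C(S^1)$ is commutative, the C$^*$-envelope is forced to be a quotient of $C(S^1)$ corresponding to a closed subset of $S^1$, and because $C(S^1)_{(n)}$ separates points of $S^1$ and contains enough trigonometric polynomials, the Shilov boundary is all of $S^1$, giving $\cstare(C(S^1)_{(n)}) = C(S^1)$. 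For part (3), I would cite the relevant source — this is precisely the phenomenon studied by Kavruk, Paulsen, Todorov and Winter \cite{kirchberg--wassermann1998} (or the Kavruk--Paulsen--Todorov--Winter circle of ideas, and for the Toeplitz case specifically the work connecting $\oss_1$ and group C$^*$-algebras) — where non-exactness of $\cstaru$ of these low-dimensional operator systems is established, typically by relating $\cstaru(C(S^1)_{(n)})$ or $\cstaru(C(S^1)^{(n)})$ to a quotient that would contradict exactness, or to a C$^*$-algebra known to fail the lifting property.

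\medskip

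I expect part (3) to be the main obstacle in terms of locating and correctly citing the argument, because non-exactness is a genuinely non-elementary fact: it relies on the failure of some tensorial property (e.g. the double commutant expectation or the weak expectation property) for these operator systems, and the cleanest route is to cite the theorem rather than reconstruct it. For $n=2$ the operator system $C(S^1)_{(2)}$ is the universal operator system $\oss_1$ generated by a single unitary, and $\cstaru(\oss_1) = \cstar(\mathbb Z \ast \mathbb Z \text{-type construction})$ — more precisely it relates to the full group C$^*$-algebra of a free product — whose non-exactness is classical; the general $n$ case follows by a similar universal-unitary-with-relations argument. Parts (1) and (2), by contrast, are routine Shilov-boundary computations and I would present them compactly, leaning on the irreducibility observation already recorded and on the fact that in the commutative case the C$^*$-envelope is the algebra of continuous functions on the Shilov boundary.

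\medskip

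In the write-up I would keep the proof short: state that (1) and (2) follow from standard identification of the C$^*$-envelope via boundary representations (with the key input being irreducibility of $s$ for (1) and point-separation for (2)), and that (3) is established in the cited references. Since the theorem is labelled as drawn from the literature, a detailed original proof is neither expected nor appropriate; the obstacle is purely bibliographic precision, and the resolution is to pin down the correct citations for each of the three assertions.
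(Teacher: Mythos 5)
Your approach matches the paper's: the proof there is pure attribution, with no argument supplied beyond the citations. Since you flagged bibliographic precision as the real obstacle, here are the corrections: the paper sources (1) and (2) to Propositions 4.2 and 4.3 of \cite{connes-vansuijlekom2020}, not to \cite{farenick--mastnak--popov2016}, and it sources (3) to Proposition 6.3 of \cite{kavruk2014}. Your citation of \cite{kirchberg--wassermann1998} under the name ``Kavruk--Paulsen--Todorov--Winter'' conflates two different references; the non-exactness result is Kavruk's. Your sketches for (1) and (2) are sound as far as they go --- and for (1) the paper's own Proposition \ref{hyperrigid} (hyperrigidity of $C(S^1)^{(n)}$ in $\M_n(\mathbb C)$, via the fact that the unitary Toeplitz matrices generate the algebra) would give an internal route to $\cstare(C(S^1)^{(n)})=\M_n(\mathbb C)$ without leaving the paper --- and your remark that for $n=2$ the system $C(S^1)_{(2)}$ is the universal operator system $\oss_1$ of a single unitary, whose universal C$^*$-algebra is tied to a full free-product group C$^*$-algebra, is indeed the mechanism behind Kavruk's non-exactness proof.
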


\begin{proof} The first two assertions are given by Propositions 4.2 and 4.3 of
\cite{connes-vansuijlekom2020}, while the third assertion is derived from
Proposition 6.3 of \cite{kavruk2014}.
\end{proof}

\begin{definition} An operator subsystem $\osr$ of a unital C$^*$-algebra $\A$ is \emph{hyperrigid in $\A$} if, for every 
representation $\pi:\A\rightarrow\B(\H_\pi)$ of $\A$, the ucp map $\pi_{\vert\osr}:\osr\rightarrow\B(\H_\pi)$ has a unique
extension to a completely positive linear map on $\A$.
\end{definition}

In the definition above, the unique completely positive extension of the restriction $\pi_{\vert\osr}$ of $\pi$ to $\osr$ 
is of course $\pi$ itself.

\begin{proposition}\label{hyperrigid} The operator system $C(S^1)^{(n)}$ is hyperrigid in $M_n(\mathbb C)$, for all $n\geq2$.
\end{proposition}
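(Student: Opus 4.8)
The plan is to invoke Arveson's characterization of hyperrigidity in terms of the unique extension property together with the concrete structure of $\M_n(\mathbb C)$. Since $\M_n(\mathbb C)$ has, up to unitary equivalence, only one irreducible representation---the identity representation $\pi_0$ on $\mathbb C^n$---and every representation of $\M_n(\mathbb C)$ is a multiple of $\pi_0$, it suffices to check that $\pi_0$ restricted to $C(S^1)^{(n)}$ has a unique completely positive extension to $\M_n(\mathbb C)$; the general case then follows by taking direct sums. Equivalently, by Arveson's boundary theorem machinery, I would verify that $C(S^1)^{(n)}$ is a boundary set for $\M_n(\mathbb C)$ in the sense that the only unital completely positive map $\Psi:\M_n(\mathbb C)\to\M_n(\mathbb C)$ agreeing with the identity on $C(S^1)^{(n)}$ is the identity map itself.

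So let $\Psi:\M_n(\mathbb C)\to\M_n(\mathbb C)$ be ucp with $\Psi(x)=x$ for all Toeplitz $x$. The key observation is that the analytic shift matrix $s$ lies in $C(S^1)^{(n)}$ and satisfies $\Psi(s)=s$, $\Psi(s^*)=s^*$, and $\Psi(\unit)=\unit$. Now $s$ is a partial isometry with $s^*s=\unit - e_{nn}$ and $ss^*=\unit - e_{11}$; the point is to exploit the multiplicative-domain argument. Because $\Psi$ is ucp and fixes $s$ and $s^*$, and $s^*s$ and $ss^*$ are again Toeplitz (diagonal) matrices fixed by $\Psi$, one gets $\Psi(s^*s)=s^*s=\Psi(s)^*\Psi(s)$ and $\Psi(ss^*)=ss^*=\Psi(s)\Psi(s)^*$. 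By the Choi--Effros multiplicative domain theorem, $s$ (and $s^*$) therefore lie in the multiplicative domain of $\Psi$, so $\Psi(sas^*)=s\Psi(a)s^*$ and more generally $\Psi$ is a bimodule map over the $*$-algebra generated by $s$. But the $*$-algebra generated by the irreducible operator $s$ is all of $\M_n(\mathbb C)$, whence the multiplicative domain is everything and $\Psi$ is a unital $*$-homomorphism; being unital and on $\M_n(\mathbb C)$ it must be the identity (or a unitary conjugation, but fixing $s$ forces it to be the identity, since the commutant of $\{s,s^*\}$ is trivial).

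The main obstacle is making the multiplicative-domain step fully rigorous: one must be careful that the equalities $\Psi(s^*s)=\Psi(s)^*\Psi(s)$ genuinely follow from $\Psi$ fixing the relevant Toeplitz matrices and not merely from some weaker positivity bound. This is where the detailed structure of $C(S^1)^{(n)}$---that it contains $\unit$, $s$, $s^*$, and all the diagonal matrices $s^k(s^*)^k$ and $(s^*)^ks^k$ needed---has to be checked against equation~(\ref{tms-intro}), noting that powers $s^j$ for $j\le n-1$ are themselves Toeplitz matrices in $C(S^1)^{(n)}$. Once $s,s^*,\dots,s^{n-1},(s^*)^{n-1}$ are all seen to be fixed and in the multiplicative domain, the algebra they generate is $\M_n(\mathbb C)$ and the argument closes. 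Alternatively, and perhaps more cleanly, one can cite Arveson's result that an operator system containing a set of matrices that generates $\M_n(\mathbb C)$ as a C$^*$-algebra and has the unique extension property at the identity representation is hyperrigid, reducing the proof to the single computation that the identity representation is a boundary representation for $C(S^1)^{(n)}$---which is essentially the content of assertion~(1) of Theorem~\ref{lit} that $\cstare(C(S^1)^{(n)})=\M_n(\mathbb C)$ together with irreducibility of $s$.
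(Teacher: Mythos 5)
Your overall strategy---reduce to representations of the form $a\mapsto a\otimes 1$ and run a multiplicative-domain argument---is sound and close in spirit to the paper's proof, which verifies the hypothesis of a lemma of Harris and Kim (if the unitaries of an operator subsystem generate the ambient C$^*$-algebra, the subsystem is hyperrigid). But the central step of your argument fails. You assert that $s^*s$ and $ss^*$ are ``Toeplitz (diagonal) matrices fixed by $\Psi$,'' and later that $C(S^1)^{(n)}$ contains ``all the diagonal matrices $s^k(s^*)^k$ and $(s^*)^ks^k$.'' Neither claim is true: $s^*s=1-e_{nn}$ and $ss^*=1-e_{11}$, and a diagonal Toeplitz matrix is constant along its main diagonal, hence a scalar multiple of the identity. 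These projections therefore do not lie in $C(S^1)^{(n)}$, the hypothesis that $\Psi$ fixes the Toeplitz matrices says nothing about $\Psi(s^*s)$, and the Schwarz inequality yields only $\Psi(s^*s)\geq\Psi(s)^*\Psi(s)$ --- not the equality needed to place $s$ in the multiplicative domain. This is exactly the obstacle you flag in your final paragraph, and it is fatal to the argument as written.

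The repair is the paper's device: write $s=\tfrac12(u+w)$, where $u=e_{1,n}+\sum_{i=2}^{n}e_{i,i-1}$ and $w=-e_{1,n}+\sum_{i=2}^{n}e_{i,i-1}$ are \emph{unitary} Toeplitz matrices. If $\Psi$ is ucp and $\Psi(u)=u$, then $\Psi(u)^*\Psi(u)=1=\Psi(1)=\Psi(u^*u)$ holds automatically, so $u$ and $w$ lie in the multiplicative domain of $\Psi$; the $*$-algebra they generate contains $s$ and $s^*$ and hence, by irreducibility of $s$, all of $\M_n(\mathbb C)$, so $\Psi$ is the identity and the argument closes. Two smaller remarks. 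First, your reduction to the identity representation ``by taking direct sums'' uses the fact that the unique extension property passes to direct sums; this is true but requires a corner-plus-Schwarz argument (alternatively, run the multiplicative-domain argument directly for $\pi(a)=a\otimes 1_{\H_0}$, which costs nothing extra). Second, your closing alternative understates what is needed: in finite dimensions $\cstare(\oss)=\M_n(\mathbb C)$ holds for \emph{every} irreducible generating operator subsystem $\oss$ simply because $\M_n(\mathbb C)$ is simple, so assertion (1) of Theorem \ref{lit} by itself does not certify that the identity representation has the unique extension property; for that one needs Arveson's boundary theorem (as invoked in Proposition \ref{ti}) or the existence theorem for boundary representations.
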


\begin{proof} 
Write the lower-triangular shift matrix $s\in C(S^1)^{(n)}$ as
$s=(1/2)(u+w)$, where $u$ and $w$ are the unitary Toeplitz matrices
\[
u=e_{1,n}+\sum_{i=2}^{n} e_{i,i-1} \mbox{ and } w=-e_{1,n}+\sum_{i=2}^{n} e_{i,i-1}.
\]
As the algebra generated by $s$ and $s^*$ is $\M_n(\mathbb C)$, this shows that
the unitary Toeplitz matrices generate the algebra $\M_n(\mathbb C)$. The hyperrigidity of
$C(S^1)^{(n)}$ now follows from
\cite[Lemma 3.11]{harris--kim2019}, which states that if the unitary elements of an operator subsystem $\osr$
of a unital C$^*$-algebra $\A$ generate $\A$, then $\osr$ is hyperrigid in $\A$.
\end{proof}

The following lemmas on automatic complete positivity will be useful. 

\begin{lemma} \label{cp1}
If $\osr$ and $\oss$ are operator systems such that the C$^*$-envelope of $\oss$ is abelian, then
every positive linear map $\phi:\osr\rightarrow\oss$ is completely positive.
\end{lemma}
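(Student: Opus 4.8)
The plan is to reduce the completely positive case to the positive case by using two standard facts: first, that a linear map into an abelian C$^*$-algebra is completely positive if and only if it is positive, and second, that complete positivity of $\phi:\osr\to\oss$ is detected after amplification by the identity on matrix algebras, and such an amplification still has abelian-C$^*$-envelope target after a suitable adjustment.

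First I would recall that by the Choi--Effros embedding we may regard $\oss$ concretely inside its C$^*$-envelope $\cstare(\oss)$, which by hypothesis is an abelian C$^*$-algebra, hence of the form $C(X)$ for some compact Hausdorff space $X$ (or, if $\oss$ is finite-dimensional, $\CC^N$). Thus $\oss$ is a unital selfadjoint subspace of $C(X)$. The key classical lemma is: every positive linear map $\psi:\osr\to C(X)$ is automatically completely positive. This is because, for $p\in\NN$ and $[r_{ij}]\in\M_p(\osr)_+$, one must show $[\psi(r_{ij})]$ is positive in $\M_p(C(X))=C(X,\M_p)$, i.e. that $[\psi(r_{ij})(x)]\geq0$ in $\M_p$ for each $x\in X$; but $r\mapsto\psi(r)(x)$ is the composition of $\psi$ with the evaluation state $\mathrm{ev}_x$ on $C(X)$, hence a positive \emph{functional}-valued\,---\,rather, a positive \emph{scalar}-valued when $p=1$, and for general $p$ one invokes that a positive linear map with range $\CC$ (a state, up to scaling) is completely positive, so each $\mathrm{ev}_x\circ\psi:\osr\to\CC$ is completely positive and therefore $[\,(\mathrm{ev}_x\circ\psi)(r_{ij})\,]\geq0$. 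Assembling over all $x\in X$ gives positivity of $[\psi(r_{ij})]$ in $C(X,\M_p)$.

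The second step is the translation to general $\oss$ whose C$^*$-envelope is abelian. The inclusion $\iota:\oss\hookrightarrow\cstare(\oss)=C(X)$ is a unital complete order embedding, so for any linear map $\phi:\osr\to\oss$, $\phi$ is positive (resp. completely positive) as a map into $\oss$ if and only if $\iota\circ\phi$ is positive (resp. completely positive) as a map into $C(X)$. Now: if $\phi$ is positive into $\oss$, then $\iota\circ\phi$ is positive into $C(X)$; by the classical lemma $\iota\circ\phi$ is completely positive into $C(X)$; since the embedding is a complete order embedding, $\phi$ is completely positive into $\oss$. That completes the argument.

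The main obstacle, such as it is, is really just the careful packaging of the classical fact that positive maps into abelian C$^*$-algebras are completely positive\,---\,ensuring the evaluation-functional argument is correctly stated for $p\times p$ amplifications (one needs that a state on an operator system is automatically completely positive, which is elementary: $\M_p$-valued positivity of $[\omega(r_{ij})]$ for a state $\omega$ follows because $[\omega(r_{ij})]=(\mathrm{id}_p\otimes\omega)([r_{ij}])$ and a state, being a ucp map onto $\CC$, has $\mathrm{id}_p\otimes\omega$ positive by the Stinespring/Kadison argument, or directly since $\CC$ has the trivial matrix order). The only genuine subtlety is being careful about whether $\cstare(\oss)$ is unital abelian in the stated generality; for the applications in this paper $\oss$ will be finite-dimensional so $\cstare(\oss)=\CC^N$ and $X$ is finite, which makes every step transparent, but the proof goes through verbatim for $X$ an arbitrary compact Hausdorff space.
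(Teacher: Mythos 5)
Your proposal is correct and follows essentially the same route as the paper: embed $\oss$ into its abelian C$^*$-envelope via the complete order embedding, invoke the classical fact that positive maps into a unital abelian C$^*$-algebra are completely positive, and pull the conclusion back through the embedding. The only difference is that you prove that classical fact via evaluation functionals rather than citing it (the paper cites Theorem~3.9 of Paulsen's book), which is a harmless and correct elaboration.
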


\begin{proof} The unital complete order embedding $\iota_{\rm e}:\oss\rightarrow\cstare(\oss)$ has the property that, for any $X\in\M_p(\oss)$, 
the matrix $\iota_e^{[p]}(X)$ is positive in $\M_p\left(\cstare(\oss)\right)$ if and only if $X$ is positive in $\M_p(\oss)$.
In other words, 
\[
\oss\simeq \iota_{\rm e}(\oss).
\]
As every positive linear map of an operator system into a unital abelian C$^*$-algebra is completely positive \cite[Theorem 3.9]{Paulsen-book},
we deduce that
$\iota_{\rm e}\circ\phi$ is completely positive. Hence, $\phi$ is necessarily completely positive.
\end{proof}

\begin{lemma}\label{abelian dual} If $\osr$ is a finite-dimensional operator system such that $\cstare(\osr^d)$ is abelian, then every positive
linear map $\phi:\osr\rightarrow\B(\H)$ is completely positive.
\end{lemma}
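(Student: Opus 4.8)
The plan is to derive this from Lemma~\ref{cp1} by way of duality, after first reducing to maps whose range is finite-dimensional. Let $\phi:\osr\to\B(\H)$ be a positive linear map. For each finite-rank projection $P\in\B(\H)$ the compression $\phi_P(x):=P\phi(x)P$ is again a positive linear map of $\osr$, now into the finite-dimensional C$^*$-algebra $P\B(\H)P$, which we identify with $\M_p(\mathbb C)$ for $p=\dim P\H$. It is enough to prove that every such $\phi_P$ is completely positive: positivity of $\phi^{[p]}(X)$ for a positive $X\in\M_p(\osr)$ is tested against vectors $\eta\in\H^{\oplus p}$, and each such $\eta$ lies in $(P\H)^{\oplus p}$ for a suitable finite-rank $P$, for which $\langle\phi^{[p]}(X)\eta,\eta\rangle=\langle\phi_P^{[p]}(X)\eta,\eta\rangle\geq0$. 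Thus the problem becomes that of showing that an arbitrary positive linear map $\psi:\osr\to\M_p(\mathbb C)$ is completely positive.

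Here the hypothesis on $\osr^d$ enters. Since $\osr$ is finite-dimensional, $\osr^d$ is an operator system, and so is $\M_p(\mathbb C)^d$. By the duality of positive and completely positive maps under taking adjoints (recalled in the Introduction), the adjoint $\psi^d:\M_p(\mathbb C)^d\to\osr^d$ is positive. Since $\cstare(\osr^d)$ is abelian by hypothesis, Lemma~\ref{cp1} applies to $\psi^d$ and shows that it is in fact completely positive. Dualising once more, $(\psi^d)^d$ is completely positive; and since $\osr$ and $\M_p(\mathbb C)$ are finite-dimensional, the canonical complete order isomorphisms $\osr\simeq\osr^{dd}$ and $\M_p(\mathbb C)\simeq\M_p(\mathbb C)^{dd}$ identify $(\psi^d)^d$ with $\psi$. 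Hence $\psi$, and therefore $\phi$, is completely positive.

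The argument is essentially formal once Lemma~\ref{cp1} is in hand, so no step poses a serious obstacle; the one point that requires care is the reduction to finite rank, which cannot be skipped, since $\B(\H)^d$ fails to be an operator system when $\H$ is infinite-dimensional and so the dualisation cannot be applied to $\phi$ itself. Conceptually, all that is happening is that duality interchanges the two automatic-complete-positivity phenomena: completeness of positive maps \emph{into} an operator system $\oss$ is governed by $\cstare(\oss)$, while completeness of positive maps \emph{out of} $\osr$ into an arbitrary $\B(\H)$ is governed by $\cstare(\osr^d)$.
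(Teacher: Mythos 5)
Your argument is correct and follows essentially the same route as the paper: reduce to a finite-dimensional codomain, pass to the adjoint map, apply Lemma~\ref{cp1} to the positive map into $\osr^d$ (whose C$^*$-envelope is abelian), and dualise back. The only difference is cosmetic: the paper obtains the finite-dimensional reduction in one step by replacing $\B(\H)$ with the operator subsystem $\ost$ spanned by $\phi(\osr)$ and the identity (finite-dimensional because $\osr$ is), whereas you compress to finite-rank corners $P\B(\H)P$ and test against vectors; both are valid.
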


\begin{proof} The range of $\phi$ is a $*$-closed subspace of $\B(\H)$. Let $\ost$ denote the operator subsystem of $\B(\H)$ spanned by
$\phi(\osr)$ and the identity operator on $\H$; thus, $\ost$ is a finite-dimensional operator system and $\phi$ is a positive linear map 
$\osr\rightarrow\ost$. Consider the dual linear map $\phi^d:\ost^d\rightarrow\osr^d$; because $\phi$ is positive, the dual map $\phi^d$ is positive.
Because $\cstare(\osr^d)$ is abelian, Lemma \ref{cp1} implies that $\phi^d$ is completely positive; hence,  $(\phi^d)^d=\phi$ is completely
positive.
\end{proof}

\subsection{Tensor representations}
Suppose that an operator system $\osr$
has a linear basis $\{r_{\ell}\}_{\ell}$. Fix $p\in\mathbb N$ and let $\{e_{ij}\}_{i,j=1}^p$ denote the canonical basis of $\M_p(\mathbb C)$ given by the
standard matrix units. The algebraic tensor product $\osr\otimes\M_p(\mathbb C)$ may be identified with the set of $p\times p$ matrices with entries from $\osr$ or, 
alternatively,
as ``$\osr$ with entries from $\M_p(\mathbb C)$.'' As it is this second viewpoint that is required, a brief explanation of what this phrase means is given below. 

If $x\in\osr\otimes\M_p(\mathbb C)$, then there exists scalars $\alpha_{\ell i j}\in\mathbb C$, finitely many of which are nonzero, such that
\[
x=\sum_\ell\sum_i\sum_j \alpha_{\ell i j} (r_\ell\otimes e_{ij}) = \sum_i\sum_j\left(\sum_\ell \alpha_{\ell i j} r_\ell \right)\otimes e_{ij} = \sum_i\sum_j x_{ij}\otimes e_{ij},
\]
where $x_{ij}\in\osr$ is the element $\sum_\ell \alpha_{\ell i j} r_\ell$. This gives the identification of $\osr\otimes\M_p(\mathbb C)$ as $p\times p$ matrices with entries from $\osr$,
which is the usual point of view in the theory of operator systems and operator spaces. On the other hand, we may express $x\in\osr\otimes\M_p(\mathbb C)$ as
\[
x=\sum_\ell\sum_i\sum_j \alpha_{\ell i j} (r_\ell\otimes e_{ij})=\sum_\ell r_\ell\otimes\left( \sum_i\sum_j \alpha_{\ell i j}e_{ij}\right) = \sum_\ell r_\ell\otimes a_\ell,
\]
where $a_\ell\in\M_p(\mathbb C)$ is the matrix $ \sum_i\sum_j \alpha_{\ell i j}e_{ij}$. This give us the conceptual identification of $\osr\otimes\M_p(\mathbb C)$
as $\osr$ with entries from $\M_p(\mathbb C)$.

Similarly, if $\phi:\osr\rightarrow\oss$ is a linear map of operator systems, then the linear map 
$\phi^{[p]}=\phi\otimes\mbox{\rm id}_{\M_p(\mathbb C)}:\osr\otimes\M_p(\mathbb C)\rightarrow\oss\otimes\M_p(\mathbb C)$ evaluated at $x\in\osr\otimes\M_p(\mathbb C)$
is, in our two views of $x$ above, given by
\[
\phi^{[p]}[x]=
\phi^{[p]}\left[\sum_\ell\sum_i\sum_j \alpha_{\ell i j} (r_\ell\otimes e_{ij}) \right]= \sum_i\sum_j \phi(x_{ij})\otimes e_{ij}= \sum_\ell \phi(r_\ell)\otimes a_\ell.
\]

\subsection{Canonical linear bases}

The canonical linear basis for 
the operator system $C(S^1)^{(n)}$ of Toeplitz matrices is given by powers of the shift and their adjoints. That is,  
if $\{r_{-n+1},\dots,r_0,\dots, r_{n-1}\}$ is the set of matrices
defined  by
\[
r_k\;=\; \left\{
       \begin{array}{lcl}
           s^k   &:\quad&      \mbox{if }k\geq0  \\
            (s^*)^k          &:\quad&       \mbox{if }k < 0
           
      \end{array}
      \right\}
      \,,
\]
then $\{r_{-n+1},\dots,r_0,\dots, r_{n-1}\}$ is a linear basis of $C(S^1)^{(n)}$. The identity matrix 
$r_0$ serves as the Archimedean order unit 
for the operator system $C(S^1)^{(n)}$.

The operator system $C(S^1)_{(n)}$ of trigonometric polynomials of degree less than $n$ has a canonical linear basis consisting of functions
$\chi_k:S^1\rightarrow\mathbb C$ defined by $\chi_k(z)=z^k$, for $k=-n+1,\dots,n-1$. Recall that
$\chi_0$ is the Archimedean order unit for the operator system $C(S^1)_{(n)}$. 

Let
$\{\mathfrak e_k\,|\, -n+1\leq k\leq n-1\}$ denote the
dual basis
of the $\{\chi_k\,|\,-n+1\leq k\leq n-1\}$; thus, for each $k$,
\[
\mathfrak e_k [f] =\hat f(k),
\]
for every $f\in C(S^1)_{(n)}$. The faithful state $\mathfrak e_0$ is the designated
Archimedean order unit of the operator system dual $\left(C(S^1)_{(n)}\right)^d$.

\section{The Connes-van Suijlekom Theorem}

The Connes-van Suijlekom theorem \cite[Proposition 4.6]{connes-vansuijlekom2020} is formulated and proved below in the
context of the operator system category. The proof is modeled on the arguments of Connes and van Suijlekom.
 
\begin{theorem}\label{main result proof} $C(S^1)^{(n)}\simeq \left(C(S^1)_{(n)}\right)^d$  for every $n\in\mathbb N$.
\end{theorem}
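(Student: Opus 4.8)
The plan is to work directly with the linear map $\phi\colon C(S^1)^{(n)}\to\bigl(C(S^1)_{(n)}\bigr)^d$ determined on the canonical bases by $\phi(r_k)=\mathfrak e_k$ for $-n+1\le k\le n-1$. This map is a unital linear bijection (it carries $r_0$ to $\mathfrak e_0$), and it is an order isomorphism by \cite[Proposition 4.6]{connes-vansuijlekom2020}; what remains is to show that $\phi^{[p]}$ and $(\phi^{[p]})^{-1}=(\phi^{-1})^{[p]}$ are positive for every $p\in\mathbb N$. The reformulation I would use is to identify $\M_p\bigl((C(S^1)_{(n)})^d\bigr)$ with the linear maps $C(S^1)_{(n)}\to\M_p(\mathbb C)$, so that, by the very definition of the matrix ordering on a dual operator system, positivity of an element corresponds to complete positivity of the associated map. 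A general element of $\M_p\bigl(C(S^1)^{(n)}\bigr)$ is a block Toeplitz matrix $x=\sum_k r_k\otimes a_k=[\,a_{i-j}\,]_{i,j=1}^n$ with entries $a_k\in\M_p(\mathbb C)$, and a short computation on the basis elements shows that under the identification above $\phi^{[p]}(x)$ corresponds to the map $\Psi_x\colon C(S^1)_{(n)}\to\M_p(\mathbb C)$ with $\Psi_x(\chi_k)=a_k$, equivalently $\Psi_x(f)=\sum_k\hat f(k)\,a_k$. Everything then reduces to the equivalence
\[
[\,a_{i-j}\,]_{i,j=1}^n\ \text{is positive in }\M_{np}(\mathbb C)
\qquad\Longleftrightarrow\qquad
\Psi_x\colon C(S^1)_{(n)}\to\M_p(\mathbb C)\ \text{is completely positive}:
\]
the forward implication gives complete positivity of $\phi$, and the reverse implication that of $\phi^{-1}$.

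For the reverse implication I would use a single positive test element. Put $K:=[\,\chi_{i-j}\,]_{i,j=1}^n\in\M_n\bigl(C(S^1)_{(n)}\bigr)$; since $K$ corresponds to the matrix-valued function $z\mapsto k(z)k(z)^*$, where $k(z)=(1,z,\dots,z^{n-1})$ is regarded as a column vector, one has $K\in\M_n\bigl(C(S^1)_{(n)}\bigr)_+$. As $\Psi_x^{[n]}(K)=[\,\Psi_x(\chi_{i-j})\,]_{i,j}=[\,a_{i-j}\,]_{i,j}$ is nothing but $x$ after the canonical shuffle of tensor legs, complete positivity of $\Psi_x$ forces $x\ge0$ at once.

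The forward implication is the crux and is where a classical analytic input is required. Positivity of the block Toeplitz matrix $[\,a_{i-j}\,]_{i,j=1}^n$ is precisely the solvability condition in the truncated matricial trigonometric moment problem, so there is a positive $\M_p(\mathbb C)$-valued Borel measure $\mu$ on $S^1$ with $\int_{S^1}z^k\,d\mu(z)=a_k$ for $|k|\le n-1$. Integration against $\mu$ yields a completely positive linear map $\widetilde\Psi\colon C(S^1)\to\M_p(\mathbb C)$, $\widetilde\Psi(f)=\int_{S^1}f\,d\mu$, whose restriction to $C(S^1)_{(n)}$ is $\Psi_x$; since the restriction of a completely positive map is completely positive, $\Psi_x$ is completely positive. (Alternatively one can bypass the moment problem: given $F\in\M_q\bigl(C(S^1)_{(n)}\bigr)_+$, factor $F(z)=G(z)G(z)^*$ with $G$ an analytic matrix polynomial of degree less than $n$ via the matricial Fej\'er--Riesz theorem, and then recognise $\Psi_x^{[q]}(F)$ as a Schur--Hadamard-type product of the positive matrices $x$ and $[\,g_ig_j^*\,]_{i,j}$, which is positive by the Schur--Hadamard isometry recalled in the preliminaries.) Granting the displayed equivalence, $\phi$ is a unital linear bijection with $\phi$ and $\phi^{-1}$ both completely positive, hence a unital complete order isomorphism carrying $r_0$ to $\mathfrak e_0$, which is exactly the assertion $C(S^1)^{(n)}\simeq\bigl(C(S^1)_{(n)}\bigr)^d$. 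The only non-formal step is the matricial Carath\'eodory--Toeplitz (equivalently, Fej\'er--Riesz) theorem invoked in the forward implication; everything else is bookkeeping with the canonical bases and the tensor-leg identifications.
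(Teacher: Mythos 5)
Your proposal is correct, and while its overall skeleton (an explicit basis-level map, plus the identification of $\M_p\bigl((C(S^1)_{(n)})^d\bigr)_+$ with completely positive maps $C(S^1)_{(n)}\to\M_p(\mathbb C)$) matches the paper's, both halves of the key equivalence are argued differently. For complete positivity of $\phi$, your primary route through the truncated matricial trigonometric moment problem --- producing a representing positive matrix measure and hence a completely positive map on all of $C(S^1)$ restricting to $\Psi_x$ --- is a clean packaging of the classical input; the paper instead takes a positive test function $F\in\M_p(C(S^1)_{(n)})$, factors it by the operator-valued Riesz--Fej\'er theorem, and verifies positivity of $\sum_k\tau_{-k}\circ a_k$ by a direct inner-product computation with the Schur--Hadamard isometry, which is exactly your parenthetical alternative. (The moment-problem route is the same Ando-type result the paper itself invokes later, so nothing circular is introduced.) The more genuinely novel divergence is in the reverse direction: your single positive kernel $K=[\chi_{i-j}]_{i,j=1}^n$ with $\Psi_x^{[n]}(K)=[a_{i-j}]$ yields complete positivity of $\phi^{-1}$ at every matrix level in one stroke, whereas the paper first proves only positivity of $\phi^{-1}$ (pure states on $C(S^1)_{(n)}$, Krein--Milman, point evaluations $\Lambda$) and then upgrades to complete positivity by passing to the adjoint map $(\phi^{-1})^d$ and using that the C$^*$-envelope of $C(S^1)_{(n)}$ is abelian; your test-element argument is more direct and avoids that bootstrap entirely. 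The only cosmetic discrepancy is your convention $\phi(r_k)=\mathfrak e_k$ versus the paper's $\phi(r_k)=\mathfrak e_{-k}$; these differ by the transpose on Toeplitz matrices, which is a unitary conjugation, so either yields the asserted unital complete order isomorphism.
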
 

\begin{proof} The case $n=1$ is trivial; therefore, it is assumed that $n\geq2$.

Consider the linear map $\phi:C(S^1)^{(n)}\rightarrow\left(C(S^1)_{(n)}\right)^d$ which takes a Toeplitz matrix
$t=[\tau_{k-\ell }]_{k,\ell=0}^{n-1}\in\M_{n}(\mathbb C)$ to the linear functional $\varphi_t:C(S^1)_{(n)} \rightarrow\mathbb C$ defined by
\begin{equation}\label{lf defn}
\varphi_t(f)=\sum_{k=-n+1}^{n-1}\tau_{-k}a_k,
\end{equation}
where $f(z)=\displaystyle\sum_{k=-n+1}^{n-1}a_kz^k$. Note that $\phi$ is injective (and hence surjective) and that $\phi$ sends the
identity matrix in $C(S^1)^{(n)}$ to the
linear functional $f\mapsto\hat f(0)$, which we have identified as the Archimedean order unit $\mathfrak e_0$
of the operator system dual $\left(C(S^1)_{(n)}\right)^d$. Therefore, $\phi$ is a unital linear isomorphism, and 
it remains to show that $\phi$ and $\phi^{-1}$ are completely positive.

To show that $\phi$ is completely positive, fix $p\in\mathbb N$ and consider the linear isomorphism $\phi^{[p]}=\phi\otimes\mbox{\rm id}_{\M_p(\mathbb C)}$.
If $T\in C(S^1)^{(n)}\otimes \M_p(\mathbb C)$, then there are matrices $\tau_\ell\in\M_p(\mathbb C)$ such that
\[
T=\sum_{\ell=-n+1}^{n-1}r_\ell\otimes \tau_{\ell},
\]
which as a matrix is represented as
\begin{equation}\label{e:bt}
T= 
\left[ \begin{array}{cccccc} 
\tau_0 & \tau_{-1} & \tau_{-2} &  \dots&  \tau_{-n+2}& \tau_{-n+1} \\
\tau_1 & \tau_0 & \tau_{-1} & \tau_{-2}& \dots & \tau_{-n+2} \\
\tau_2 & \tau_1 & \tau_0 & \tau_{-1} &\ddots&   \vdots\\
\vdots & \ddots & \ddots & \ddots &\ddots &\tau_{-2} \\
\tau_{n-2} &   &  \ddots & \ddots &\ddots & \tau_{-1} \\
\tau_{n-1} & \tau_{n-2} & \dots & \tau_{2} &\tau_{1}& \tau_0
\end{array}
\right].
\end{equation}
Similarly, $\phi^{[p]}(T)$ is given by 
\[
\phi^{[p]}(T)=\sum_{\ell=-n+1}^{n-1}\phi(r_\ell)\otimes \tau_{\ell} = \sum_{\ell=-n+1}^{n-1}\mathfrak e_\ell\otimes \tau_{\ell}.
\]

To understand the action of $\phi^{[p]}(T)$ on $C(S^1)_{(n)}\otimes\M_p(\mathbb C)$, it is enough to understand the action of $\mathfrak e_\ell\otimes g$, for some
fixed $g\in\M_p(\mathbb C)$. Writing $\mathfrak e_\ell\otimes g$ as a $p\times p$ matrix of linear functionals on $C(S^1)_{(n)}$, we obtain
\[
\mathfrak e_\ell\otimes g=\left[\begin{array}{ccc} g_{11}\mathfrak e_\ell& \dots & g_{1p}\mathfrak e_\ell \\ \vdots & \ddots & \vdots \\
g_{p1}\mathfrak e_\ell &\dots & g_{pp}\mathfrak e_\ell\end{array} \right] .
\]
Therefore, if $F\in C(S^1)_{(n)}\otimes \M_p(\mathbb C)$ is given by $F(z)=\sum_{k=-n+1}^{n-1}a_kz^k$, for some matrices $a_k\in\M_p(\mathbb C)$, 
and if $a_k^{ij}\in\mathbb C$ denotes the $(i,j)$-entry of $a_k$, then
\[
F(z)=\left[ \begin{array}{ccc} \displaystyle\sum_{k=-n+1}^{n-1}a_k^{11}z^k & \dots & \displaystyle\sum_{k=-n+1}^{n-1}a_k^{1p}z^k \\
\vdots & \ddots & \vdots \\
\displaystyle\sum_{k=-n+1}^{n-1}a_k^{p1}z^k & \dots & \displaystyle\sum_{k=-n+1}^{n-1}a_k^{pp}z^k
\end{array}
\right] 
\]
and
\[
\mathfrak e_\ell\otimes g(F) = \left[ \begin{array}{ccc} g_{11}\mathfrak e_\ell\left(\displaystyle\sum_{k=-n+1}^{n-1}a_k^{11}z^k\right) & \dots & g_{1p}\mathfrak e_\ell\left(\displaystyle\sum_{k=-n+1}^{n-1}a_k^{1p}z^k \right)\\
\vdots & \ddots & \vdots \\
g_{p1}\mathfrak e_\ell\left(\displaystyle\sum_{k=-n+1}^{n-1}a_k^{p1}z^k\right) & \dots & g_{pp}\mathfrak e_\ell\left(\displaystyle\sum_{k=-n+1}^{n-1}a_k^{pp}z^k\right)
\end{array}
\right] ,
\]
implying that
\[
\mathfrak e_\ell\otimes g(F)
=
\left[ \begin{array}{ccc} g_{11}a_\ell^{11} & \dots &  g_{1p}a_\ell^{1p}\\
\vdots & \ddots & \vdots \\
g_{p1}a_\ell^{p1} & \dots &  g_{pp}a_\ell^{pp} \end{array}
\right] = g\circ a_\ell,
\]
the Schur-Hadamard product of $g$ with $a_\ell$. 
Therefore, if $T$ is the block Toeplitz matrix in (\ref{e:bt}), then
the evaluation of $\phi^{[p]}(T)$ at 
$F(z)=\displaystyle\sum_{k=-n+1}^{n-1}a_kz^k$ 
is given by
\[
\phi^{[p]}(T)[F]=\sum_{k=-n+1}^{n-1}\tau_{-k}\circ a_k.
\]

We now show that if $T$ is positive, then $\phi^{[p]}(T)[F]$ is positive for every positive $F\in C(S^1)_{(n)}\otimes\M_p(\mathbb C)$.
To this end, assume that the matrix $T$ in (\ref{e:bt}) is positive. Thus, the matrix $\tilde T$ whose $(k,\ell)$-entry is $\tau_{k-\ell}\otimes 1_p$,
where $1_p$ denotes the identity matrix of $\M_p(\mathbb C)$, is positive in $ \M_n\left(\M_p(\mathbb C)\otimes \M_p(\mathbb C)\right)$.
That is, the matrix $\tilde T$ is a positive operator on the Hilbert space $\H$ constructed from the direct sum of $n$ copies of $\mathbb C^p\otimes\mathbb C^p$.  Hence, if
$\eta\in\H$ is given by $\eta=\displaystyle\bigoplus_{i=0}^{n-1}\eta_i$, where each
$\eta_i\in \mathbb C^p\otimes\mathbb C^p$, then
\begin{equation}\label{e:ip}
\begin{array}{rcl} 
\langle\tilde T\eta, \eta\rangle &=&\displaystyle\sum_{i=0}^{n-1}\displaystyle\sum_{k=0}^{n-1}\langle(\tau_{i-k}\otimes 1_p)\eta_k,\eta_i\rangle \\ && \\
&=&\displaystyle\sum_{\ell=-n+1}^{n-1}\displaystyle\sum_{j\in I_\ell} \langle(\tau_{-\ell}\otimes 1_p)\eta_{\ell+j},\eta_j\rangle \\ && \\
&\geq& 0,
\end{array}
\end{equation} 
where the set $I_j$ is given by
\begin{equation}\label{e:I_j}
I_j = \left\{ j\in\{0,1,\dots,n-1\}\,\vert\,\ell+j\geq0\right\}.
\end{equation}
If $F:S^1\rightarrow \M_p(\mathbb C)$ is a positive matrix-valued function of the form $F(z)=\displaystyle\sum_{k=-n+1}^{n-1}a_kz^k$,
for some $a_k\in\M_p(\mathbb C)$, then, by the operator-valued Riesz-Fej\'er theorem \cite{dritschel2004,Helson-book}, there exist
$b_0,\dots, b_{n-1}\in\M_p(\mathbb C)$ such that $F(z)=H(z)^*H(z)$ for all $z\in S^1$, where
\[
H(z)=\sum_{k=0}^{n-1} b_kz^k.
\]
In computing the product $H(z)^*H(z)$, we obtain 
\begin{equation}\label{e:rf}
F(z)= \displaystyle\sum_{\ell=-n+1}^{n-1}\left(\displaystyle\sum_{j\in I_\ell} b_j^*b_{\ell+j}\right)z^\ell,
\end{equation}
where $I_j$ is the set in (\ref{e:I_j}). Hence, 
\[
a_\ell = \displaystyle\sum_{j\in I_\ell} b_j^*b_{\ell+j}
\]
for each $\ell\in\{-n+1,\dots,n-1\}$.

Consider the element $q=\displaystyle\sum_{\ell=-n+1}^{n-1}\tau_{-\ell}\otimes a_\ell$ in $\M_p(\mathbb C)\otimes \M_p(\mathbb C)$, and select any vector $\xi\in\mathbb C^p\otimes\mathbb C^p$. For each
$i\in\{0,1,\dots,n-1\}$, set $\eta_i=(1_p\otimes b_i)\xi$ and $\eta=\displaystyle\bigoplus_{i=0}^{n-1}\eta_i\in\H$.
Using
\[
\begin{array}{rcl} q&=& \displaystyle\sum_{\ell=-n+1}^{n-1}\tau_{-\ell}\otimes \left(\displaystyle\sum_{j\in I_\ell} b_j^*b_{\ell+j}\right) \\ && \\  
&=&  \displaystyle\sum_{\ell=-n+1}^{n-1}\displaystyle\sum_{j\in I_\ell} (1_p\otimes b_j)^*(\tau_{-\ell}\otimes 1_p)(1_p\otimes b_{\ell+j}),
\end{array}
\]
we deduce that
\begin{equation}\label{e:q}
\begin{array}{rcl}
\langle q\xi,\xi\rangle &=& 
\displaystyle\sum_{\ell=-n+1}^{n-1}\displaystyle\sum_{j\in I_\ell} \left\langle (\tau_{-\ell}\otimes 1_p)(1_p\otimes b_{\ell+j})\xi, (1_p\otimes b_j)\xi\right\rangle \\ && \\
&=& \displaystyle\sum_{\ell=-n+1}^{n-1}\displaystyle\sum_{j\in I_\ell} \langle(\tau_{-\ell}\otimes 1_p)\eta_{\ell+j},\eta_j\rangle \\ && \\
&=& \langle \tilde T\eta,\eta\rangle \\ && \\
&\geq& 0.
\end{array}
\end{equation}
Hence, $q$ is a positive element of $\M_p(\mathbb C)\otimes \M_p(\mathbb C)$. 

Denote by $v$ the Schur-Hadamard
isometry $v:\mathbb C^p\rightarrow\mathbb C^p\otimes \mathbb C^p$ that implements Schur-Hadamard multiplication of matrices $x$ and $y$ via $x\circ y=v^*(x\otimes y)v$.
Then, because $q$ is positive, so is
\[
\sum_{k=-n+1}^{n-1}\tau_{-k}\circ a_k =  \sum_{k=-n+1}^{n-1}v^*(\tau_{-k}\otimes a_k )v = v^*qv,
\]
which proves that $\phi^{[p]}$ is a positive linear map. Hence, $\phi$ is completely positive.

Turning now to the proof that $\phi^{-1}$ is completely positive, we begin by showing $\phi^{-1}$ is positive. To this end, let
$\psi$ be a pure state on $C(S^1)_{(n)}$, and consider weak*-closed convex set $\mathfrak S_\psi$ of all states on $C(S^1)$ that extend $\psi$. By the Krein-Milman theorem
$\mathfrak S_\psi$ has an extreme point $\Psi$, and this extreme point is necessarily an extreme point of the state space of the abelian C$^*$-algebra
$C(S^1)$; hence $\Psi$ is a point evaluation at some point $\lambda\in S^1$, implying that
$\psi(f)=\displaystyle\sum_{k=-n+1}^{n-1}a_k\lambda^k$, for all $f(z)=\displaystyle\sum_{k=-n+1}^{n-1}a_k z^k$.
Let $\Lambda$ be the positive Toeplitz matrix given by 
\[
\Lambda=\left[ 1\quad \lambda\quad \lambda^2 \quad \dots\quad\lambda^{n-1}\right]^* \left[ 1\quad \lambda\quad \lambda^2 \quad \dots\quad\lambda^{n-1}\right],
\]
and note that $\phi(\Lambda)=\psi$. Hence, $\phi^{-1}(\psi)$ is positive and, therefore, so is $\phi^{-1}(\varphi)$, for every linear functional $\varphi$ that is a limit of positive scalar multiples of
convex combinations of pure states on $C(S^1)_{(n)}$. This proves that $\phi^{-1}$ is positive.

Because $\phi^{-1}$ is positive linear map of $\left(C(S^1)_{(n)}\right)^d$ into $C(S^1)^{(n)}$ , 
its adjoint $\left(\phi^{-1}\right)^d:\left(C(S^1)^{(n)}\right)^d\rightarrow C(S^1)_{(n)}$ is also positive. Furthermore, since the operator system $C(S^1)_{(n)}$ is an operator subsystem
of the abelian C$^*$-algebra $C(S^1)$ and because every positive linear map of an operator system into a unital abelian
C$^*$-algebra is completely positive by Lemma \ref{cp1}, the positive linear map $\left(\phi^{-1}\right)^d$ is completely positive. Thus, 
$\phi^{-1}$ is completely positive.

Hence, $\phi$ is a unital complete order isomorphism.
\end{proof}

\begin{corollary}\label{main result dual}
$C(S^1)_{(n)} \simeq \left(C(S^1)^{(n)}\right)^d$, for every $n\in\mathbb N$.
\end{corollary}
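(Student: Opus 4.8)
The plan is to deduce Corollary \ref{main result dual} from Theorem \ref{main result proof} (equivalently, Theorem \ref{main result}) by a duality argument, using the finite-dimensionality of the operator systems involved together with the fact that taking duals of finite-dimensional operator systems is (essentially) involutive and functorial with respect to unital complete order isomorphisms. Concretely, the first step is to record that for any finite-dimensional operator system $\osr$ equipped with a designated faithful state as Archimedean order unit for $\osr^d$, the canonical bidual embedding $\osr\hookrightarrow (\osr^d)^d$ is a unital complete order isomorphism. This is standard: the matrix cones of $(\osr^d)^d$ are defined so that a matrix $[\psi_{ij}]$ of functionals on $\osr^d$ is positive iff the associated map $\osr^d\to\M_p(\mathbb C)$ is completely positive, and evaluation at points of $\osr$ recovers exactly the cones of $\osr$ when $\osr$ is finite-dimensional (since then every state on $\osr^d$ is, up to the identification, evaluation at some positive element of $\osr$, by the argument already used in the proof of Theorem \ref{main result proof} to show $\phi^{-1}$ is positive); the designated order units match because $\mathfrak e_0$ evaluated back on $C(S^1)^{(n)}$ is the identity. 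So I may assume the bidual fact freely.

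Next, I apply the dual functor to the isomorphism $\phi:C(S^1)^{(n)}\xrightarrow{\ \simeq\ }\left(C(S^1)_{(n)}\right)^d$ furnished by Theorem \ref{main result proof}. Since $\phi$ is a linear bijection with both $\phi$ and $\phi^{-1}$ completely positive, and since for a linear map of operator systems $\psi$ one has $\psi^{[p]}\geq 0\iff (\psi^d)^{[p]}\geq 0$ for every $p$ (as recalled in the Introduction), the adjoint $\phi^d:\left(\left(C(S^1)_{(n)}\right)^d\right)^d\to \left(C(S^1)^{(n)}\right)^d$ is a linear bijection with both $\phi^d$ and $(\phi^d)^{-1}=(\phi^{-1})^d$ completely positive, hence a unital complete order isomorphism (unitality: $\phi$ sends the order unit to the order unit, so $\phi^d$ does as well under the designated order units). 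Composing $\phi^d$ with the bidual isomorphism $C(S^1)_{(n)}\xrightarrow{\ \simeq\ }\left(\left(C(S^1)_{(n)}\right)^d\right)^d$ yields a unital complete order isomorphism $C(S^1)_{(n)}\xrightarrow{\ \simeq\ }\left(C(S^1)^{(n)}\right)^d$, which is exactly the assertion.

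I do not expect a serious obstacle here; the only point requiring a little care is the bookkeeping of \emph{which} faithful state serves as the Archimedean order unit on each dual, so that all the maps in sight are genuinely unital and the symbol $\simeq$ (which, per the conventions fixed in the Introduction, carries the order units along) is being used correctly. The designated order unit on $\left(C(S^1)_{(n)}\right)^d$ is $\mathfrak e_0$; under the bidual identification this corresponds to the identity of $C(S^1)^{(n)}$ pairing as $\chi_0\mapsto 1$; and $\phi^d$ carries the identity of $\left(C(S^1)_{(n)}\right)^d$-bidual to the functional on $C(S^1)^{(n)}$ given by $t\mapsto \tau_0$, which is the designated faithful state (the normalized trace restricted to Toeplitz matrices) serving as order unit on $\left(C(S^1)^{(n)}\right)^d$. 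Once this matching is checked, the corollary follows immediately, and the argument is otherwise purely formal.
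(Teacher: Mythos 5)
Your proposal is correct and follows essentially the same route as the paper: identify the finite-dimensional operator system with its bidual (with matching order units) and then dualize the isomorphism of Theorem \ref{main result proof}. The extra bookkeeping you carry out on the designated Archimedean order units is sound and simply makes explicit what the paper's two-sentence proof leaves implicit.
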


\begin{proof} With any finite-dimensional operator system $\osr$, the Archimedean order
unit $e_\osr$ serves as an Archimedean order
unit for the bidual $\osr^{dd}$, implying that the operator system $\osr^{dd}$ is unitally completely
order isomorphic to $\osr$. Hence, in passing to operator system duals,
Theorem \ref{main result proof} yields the conclusion.
\end{proof}

\section{Operator Systems of Toeplitz Operators}\label{S:to}

The canonical orthonormal basis functions for the Hilbert space $L^2(S^1)$ are given by $e_k(z)=(2\pi)^{-1/2}z^k$, for $k\in\mathbb Z$,
while the Hardy space $H^2(S^1)$ is the subspace of $L^2(S^1)$ having orthonormal basis $\{e_k\}_{k\geq0}$. The projection operator on  $L^2(S^1)$ with range $H^2(S^1)$ is denoted by $P$.
The linear map $\pi:C(S^1)\rightarrow \B\left(L^2(S^1)\right)$ given by $\pi(f)=M_f$, the operator of multiplication by $f$ on the Hilbert space $L^2(S^1)$, is an isometric $*$-representation of 
$C(S^1)$ on $L^2(S^1)$, implying that the linear map $\varphi:C(S^1)\rightarrow \B\left(H^2(S^1)\right)$ defined by $\varphi(f)=PM_f{}_{\vert H^2(S^1)}$ is unital and completely positive. The operator
$\varphi(f)$ is denoted by $T_f$, the Toeplitz operator with symbol $f$.
In expressing a Toeplitz operator $T_f$ as a
matrix with respect to the canonical orthonormal basis of $H^2(S^1)$, the result is an infinite Toeplitz matrix whose
$(\ell,j)$-entry of the matrix is given by $\hat f(\ell-j)$, for $\ell,j\in\{0,1,2,\dots\}$.

For each $n\in\mathbb N$, let
\[
\ost_{(n)}= \left\{T_f\,|\, f\in C(S^1)_{(n)}\right\},
\]
which is an operator system  of Toeplitz operators on $H^2(S^1)$.

\begin{proposition}\label{to} If $\varphi: C(S^1)\rightarrow\B\left(H^2(S^1)\right)$ denotes the symbol map
$\varphi(f)=T_f$, for $f\in C(S^1)$, and if $\varphi_n=\varphi_{\vert C(S^1)_{(n)}}$, for every $n\in\mathbb N$,
then $\varphi_n$ is a unital complete 
order isomorphism of $C(S^1)_{(n)}$ and $\ost_{(n)}$.
\end{proposition}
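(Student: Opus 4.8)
The plan is to realize $\varphi_n$ as the composition of the Connes--van Suijlekom isomorphism with the natural identification of $\ost_{(n)}$ as an operator system dual, so that Theorem~\ref{main result proof} does all the work. First I would observe that $\varphi_n$ is unital (since $T_{\chi_0}$ is the identity operator on $H^2(S^1)$) and injective: the matrix of $T_f$ has $(\ell,j)$-entry $\hat f(\ell-j)$, so the finitely many Fourier coefficients of $f\in C(S^1)_{(n)}$ are all recovered from the entries of $T_f$, whence $T_f=0$ forces $f=0$. Being unital and completely positive (as a restriction of the ucp symbol map $\varphi$), $\varphi_n$ is automatically a unital order isomorphism onto its range once we know it is injective; the only real content is complete order \emph{reflection}, i.e.\ that $(\varphi_n^{-1})$ is completely positive on $\ost_{(n)}$.

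The key step is to identify the compression structure. Fix the orthonormal basis $e_0,\dots,e_{n-1}$ of the subspace $K_n\subseteq H^2(S^1)$ it spans, with associated projection $Q_n$, and note that for $f\in C(S^1)_{(n)}$ the ``band'' structure of the infinite Toeplitz matrix of $T_f$ (nonzero entries only within distance $n-1$ of the diagonal) means $T_f$ is completely determined by, and determines, the finite $n\times n$ Toeplitz matrix $Q_nT_fQ_n{}_{\vert K_n}\in C(S^1)^{(n)}$. So there is a linear bijection $\beta\colon\ost_{(n)}\to C(S^1)^{(n)}$, $\beta(T_f)=Q_nT_fQ_n{}_{\vert K_n}$, and I would check that $\beta\circ\varphi_n$ is exactly the inverse of the map $\phi$ appearing in the proof of Theorem~\ref{main result proof} composed with the canonical identification $(C(S^1)_{(n)})^{d}\cong C(S^1)_{(n)}$ via the faithful state $\mathfrak e_0$ --- equivalently, that $\beta\circ\varphi_n$ sends $\chi_k$ to $r_{-k}$ (up to the explicit normalization dictated by \eqref{lf defn}), which is a direct entrywise comparison. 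Once $\beta\circ\varphi_n$ is a unital complete order isomorphism by Theorem~\ref{main result proof}, it remains only to show $\beta$ itself (hence $\beta^{-1}$) is a unital complete order isomorphism of $\ost_{(n)}$ onto $C(S^1)^{(n)}$, and then $\varphi_n=\beta^{-1}\circ(\beta\circ\varphi_n)$ is one too.

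For the claim about $\beta$: the map $x\mapsto Q_nxQ_n{}_{\vert K_n}$ is a compression, hence ucp from $\B(H^2(S^1))$ to $\B(K_n)$, so $\beta$ is ucp. Conversely, I would show its inverse is ucp by exhibiting a ucp left inverse on the ambient algebras. Since $C(S^1)^{(n)}$ is hyperrigid in $\M_n(\mathbb C)$ (Proposition~\ref{hyperrigid}) and, more to the point, since the infinite Toeplitz matrix $T_f$ for $f\in C(S^1)_{(n)}$ is built from $Q_nT_fQ_n{}_{\vert K_n}$ by tiling shifted copies along the diagonal --- concretely $T_f=\sum_{k=-n+1}^{n-1}\hat f(k)\,S^{\,k}$ where $S$ is the unilateral shift on $H^2(S^1)$ and $S^{\,k}$ means $(S^*)^{-k}$ for $k<0$ --- the reconstruction map $C(S^1)^{(n)}\to\ost_{(n)}$ sending the finite Toeplitz matrix $\sum_k\tau_k s^k$ to $\sum_k\tau_k S^{\,k}$ is visibly a unital $*$-preserving linear bijection agreeing with $\beta^{-1}$, and I would show it is completely positive by factoring it through a ampliation/conditional-expectation argument: positivity of a finite band Toeplitz matrix is equivalent (operator Fej\'er--Riesz, exactly as used in the proof of Theorem~\ref{main result proof}) to the existence of an analytic factorization, which transports to a factorization of the corresponding element of $\ost_{(n)}$ and hence to positivity there; the matrix level is handled identically with matrix coefficients.

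The main obstacle I anticipate is verifying the complete positivity of $\beta^{-1}$ cleanly, i.e.\ that compressing the infinite Toeplitz operator to the $n\times n$ corner loses no positivity information at all matrix levels. The cheapest route, and the one I would actually take, is to avoid reproving this by hand: identify $\ost_{(n)}$ with $(C(S^1)_{(n)})^{d}$ directly. Indeed the symbol map $\varphi$ is, after restriction, literally the map $f\mapsto$ ``the functional $g\mapsto$ (suitable Fourier pairing)'' once one unwinds that $\langle T_f e_j,e_\ell\rangle=\hat f(\ell-j)$; so $\varphi_n$ \emph{is} (up to the normalizations in \eqref{lf defn}) the map $\phi^{-1}$ of Theorem~\ref{main result proof} read through $C(S^1)^{(n)}\simeq(C(S^1)_{(n)})^d$ and $C(S^1)_{(n)}\simeq (C(S^1)_{(n)})^{dd}$. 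Spelling out this dictionary and then quoting Theorem~\ref{main result proof} (together with the self-duality of finite-dimensional operator systems, Corollary~\ref{main result dual}) gives the complete order isomorphism with no new hard analysis --- the only work is the bookkeeping of which basis vector goes where, which I would present as a short explicit computation of $\varphi_n(\chi_k)$ on a general $g\in C(S^1)_{(n)}$.
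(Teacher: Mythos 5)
Your proposal founders on its central structural claim. You assert that the corner compression $\beta\colon T_f\mapsto Q_nT_fQ_n{}_{\vert K_n}$ is a unital complete order isomorphism of $\ost_{(n)}$ onto $C(S^1)^{(n)}$ (and, in your fallback, that $\ost_{(n)}$ can be identified with $(C(S^1)_{(n)})^d$). Both claims are false, and the paper records the counterexample immediately after this very proposition: for $f(z)=1+z+z^{-1}$ the Toeplitz operator $T_f$ is \emph{not} positive (its essential spectrum is $f(S^1)=[-1,3]$), yet its $2\times2$ corner $\left[\begin{smallmatrix}1&1\\1&1\end{smallmatrix}\right]$ is positive; so $\beta$ is a ucp bijection whose inverse is not even positive. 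Structurally this had to fail: the proposition you are proving says $\ost_{(n)}\simeq C(S^1)_{(n)}$, whose C$^*$-envelope is the abelian algebra $C(S^1)$, whereas $C(S^1)^{(n)}\simeq \left(C(S^1)_{(n)}\right)^d$ has C$^*$-envelope $\M_n(\mathbb C)$; for $n\geq2$ no unital complete order isomorphism between these two exists, so any route through $C(S^1)^{(n)}$ or through the dual $\left(C(S^1)_{(n)}\right)^d$ is a dead end. Your Fej\'er--Riesz step is likewise misapplied: positivity of the $n\times n$ Toeplitz corner is \emph{not} equivalent to positivity of the band symbol, which is exactly what the counterexample shows.

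What is missing is the one genuine operator-theoretic input, namely that $T_f\geq0$ forces $f\geq0$ for $f\in C(S^1)_{(n)}$. The paper gets this from the identity $\mathrm{Sp}_e(T_f)=f(S^1)$ for continuous symbols: if $T_f$ is positive, its essential spectrum lies in $[0,\infty)$, hence $f\geq0$ on $S^1$. (An elementary alternative: $0\leq\langle T_fp,p\rangle=\frac{1}{2\pi}\int f\,|p|^2$ for analytic polynomials $p$, and by scalar Fej\'er--Riesz every positive trigonometric polynomial has the form $|p|^2$, which forces $f\geq0$ --- note this applies Fej\'er--Riesz to the symbol, not to the matrix corner.) Once positivity of $\varphi_n^{-1}$ is in hand, complete positivity is automatic because the codomain $C(S^1)_{(n)}$ sits inside the abelian C$^*$-algebra $C(S^1)$ (Lemma \ref{cp1}, or \cite[Theorem 3.9]{Paulsen-book}); that reduction you did anticipate, but the positivity step itself is absent from your argument.
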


\begin{proof} The function $\varphi_n: C(S^1)_{(n)}\rightarrow\B\left(H^2(S^1)\right)$ is a unital completely
positive linear map with
range $\ost_{(n)}$. In considering Fourier coefficients, the linear map 
$\varphi_n$ is clearly a linear isomorphism, and so we aim to prove that $\varphi_n^{-1}$
is completely positive. 

First note that because $\varphi_n^{-1}:\ost_{(n)}\rightarrow C(S^1)_{(n)}$ and $C(S^1)_{(n)}$ is an operator subsystem
of the abelian C$^*$-algebra $C(S^1)$, the complete positivity of $\varphi_n$ is automatic, by \cite[Theorem 3.9]{Paulsen-book},
once it has been shown that $\varphi_n$ is positive. To this end, let $T_f\in \ost_{(n)}$ be a positive Toeplitz operator
with symbol $f\in C(S^1)_{(n)}$; thus, the essential spectrum $\mbox{\rm Sp}_e(T_f)$ of $T_f$ is a subset of $[0,\infty)$.
Because $\mbox{\rm Sp}_e(T_f)=f(S^1)$ (see, for example, \cite[\S 4.6]{Arveson-book}), the symbol $f$ is a positive
element of $C(S^1)$ and, hence, of $C(S^1)_{(n)}$, thereby proving that 
$\varphi_n$ is a positive map.
\end{proof}
 
In passing to duals and applying Theorem \ref{main result proof}, 
we obtain another result that seems curious upon first encountering it:
the $n\times n$ Toeplitz matrices are dual to the infinite Toeplitz matrices arising from
symbols in $C(S^1)_{(n)}$.

\begin{corollary} $\left(\ost_{(n)}\right)^d \simeq C(S^1)^{(n)}$.
\end{corollary}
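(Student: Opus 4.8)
The plan is to obtain the asserted isomorphism by dualizing Proposition \ref{to} and composing the result with the Connes--van Suijlekom isomorphism of Theorem \ref{main result proof}, exactly as Corollary \ref{main result dual} is derived. Concretely: by Proposition \ref{to} the symbol map $\varphi_n : C(S^1)_{(n)} \to \ost_{(n)}$ is a unital complete order isomorphism, so $\varphi_n$ and $\varphi_n^{-1}$ are both unital and completely positive. By the duality principle recorded in the Introduction --- a linear map $\phi$ of operator systems is positive if and only if $\phi^d$ is positive, and $\phi \otimes \mathrm{id}_{\M_p(\mathbb C)}$ is positive if and only if $\phi^d \otimes \mathrm{id}_{\M_p(\mathbb C)}$ is positive --- the adjoint $\varphi_n^d : \left(\ost_{(n)}\right)^d \to \left(C(S^1)_{(n)}\right)^d$ and its inverse $\left(\varphi_n^{-1}\right)^d = \left(\varphi_n^d\right)^{-1}$ are both completely positive; hence $\varphi_n^d$ is a complete order isomorphism of the underlying matrix-ordered $*$-vector spaces.

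Next I would pin down the Archimedean order unit of $\left(\ost_{(n)}\right)^d$ so that the relation $\simeq$ is unambiguous. The constant function $e_0 \in H^2(S^1)$, $e_0(z) = (2\pi)^{-1/2}$, induces the vector state $\omega$ on $\ost_{(n)}$ given by $\omega(T_f) = \langle T_f e_0, e_0 \rangle = \hat f(0)$; this state is faithful (since $\varphi_n$ is an order isomorphism and $\mathfrak e_{(n)}$ is faithful on $C(S^1)_{(n)}$), and I would designate $\omega$ as the Archimedean order unit of $\left(\ost_{(n)}\right)^d$. Because $\varphi_n^d(\omega) = \omega \circ \varphi_n$ is the functional $f \mapsto \hat f(0)$, which is precisely the designated Archimedean order unit $\mathfrak e_0$ of $\left(C(S^1)_{(n)}\right)^d$, the complete order isomorphism $\varphi_n^d$ is \emph{unital}, so $\left(\ost_{(n)}\right)^d \simeq \left(C(S^1)_{(n)}\right)^d$. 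Composing with $\left(C(S^1)_{(n)}\right)^d \simeq C(S^1)^{(n)}$ from Theorem \ref{main result proof} yields $\left(\ost_{(n)}\right)^d \simeq C(S^1)^{(n)}$.

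The only step requiring genuine care --- everything else being formal --- is the bookkeeping of Archimedean order units under dualization: one must check that the faithful state chosen to make $\left(\ost_{(n)}\right)^d$ an operator system is carried by $\varphi_n^d$ onto the faithful state $\mathfrak e_0$ already fixed on $\left(C(S^1)_{(n)}\right)^d$, and this is exactly what the computation $\varphi_n^d(\omega) = \omega\circ\varphi_n \colon f \mapsto \hat f(0)$ secures.
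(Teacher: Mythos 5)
Your proposal is correct and follows the paper's (one-line) argument exactly: dualize the complete order isomorphism $\varphi_n$ of Proposition \ref{to} to get $\left(\ost_{(n)}\right)^d \simeq \left(C(S^1)_{(n)}\right)^d$, then compose with Theorem \ref{main result proof}. Your explicit verification that the vector state $\omega(T_f)=\langle T_fe_0,e_0\rangle=\hat f(0)$ is the natural faithful state on $\ost_{(n)}$ and is carried by $\varphi_n^d$ to $\mathfrak e_0$ is a careful piece of bookkeeping the paper leaves implicit.
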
 

There is another natural ucp map of interest: the one that maps a Toeplitz operator $T_f$ with symbol $f\in C(S^1)_{(n)}$
to the $n\times n$ Toeplitz matrix $t_f=\displaystyle\sum_{k=-n+1}^{n-1}\hat f(k) r_k$. 
(Equivalently, by Proposition \ref{to}, the map that sends the infinite Toeplitz matrix $T_f$
to its $n\times n$ leading principal submatrix $t_f$.)
If we denote this map
by $\psi_n$, then $\psi_n:C(S^1)_{(n)}\rightarrow C(S^1)^{(n)}$ is a ucp bijection; 
however, $\psi_n$ is not a complete order isomorphism.
For example, in the case $n=2$, the function $f(z)=1+z+z^{-1}$ is not positive on $S^1$, even though
$f=\psi_2^{-1}(t_f)$, where the
element $t_f=\left[\begin{array}{rcl} 1& 1 \\ 1& 1\end{array}\right]$ of $C(S^1)^{(2)}$ is positive.

\section{Applications to Matrix Theory}

\begin{theorem}\label{acp proof}
If $\phi:\M_{n}(\mathbb C)\rightarrow \B(\H)$ is a positive linear map, and 
if $\phi_0$ denotes the restriction of $\phi$ to the Toeplitz matrices $C(S^1)^{(n)}$, 
then $\phi_0$ is a completely positive linear map.
\end{theorem}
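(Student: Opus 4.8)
The plan is to deduce Theorem~\ref{acp proof} from the dual formulation of the Connes--van Suijlekom isomorphism, exactly as the preceding sections have set up. First I would reduce to the finite-dimensional situation: since $\phi$ is positive, its range together with the identity operator spans a finite-dimensional operator subsystem $\ost\subseteq\B(\H)$, so $\phi_0$ may be regarded as a positive linear map $C(S^1)^{(n)}\rightarrow\ost$ of operator systems (after noting that $\phi_0$ sends the identity matrix to a positive element, and enlarging $\ost$ so that it is genuinely a unital operator subsystem). At this point the key structural input is Lemma~\ref{abelian dual}: a positive linear map out of a finite-dimensional operator system $\osr$ into $\B(\H)$ is automatically completely positive provided $\cstare(\osr^d)$ is abelian. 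So it suffices to apply this with $\osr = C(S^1)^{(n)}$.

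The heart of the argument is therefore the identification of $\cstare\big((C(S^1)^{(n)})^d\big)$. By Corollary~\ref{main result dual}, $(C(S^1)^{(n)})^d\simeq C(S^1)_{(n)}$ as operator systems, and by Theorem~\ref{lit}(2) we have $\cstare\big(C(S^1)_{(n)}\big)=C(S^1)$, which is abelian. Since the C$^*$-envelope is an invariant of the unital complete order isomorphism class of an operator system, it follows that $\cstare\big((C(S^1)^{(n)})^d\big)$ is abelian. Feeding this into Lemma~\ref{abelian dual} with $\osr=C(S^1)^{(n)}$ immediately yields that every positive linear map $\phi_0:C(S^1)^{(n)}\rightarrow\B(\H)$ is completely positive, which is the claim; Theorem~\ref{auto cp} is then the special case $\H=\mathbb C^m$.

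The only genuinely delicate point — though it is more bookkeeping than obstacle — is the passage from an arbitrary positive $\phi:\M_n(\mathbb C)\rightarrow\B(\H)$ to a positive map of operator systems with codomain a finite-dimensional operator system, so that Lemma~\ref{abelian dual} literally applies. One must check that restricting $\phi$ to $C(S^1)^{(n)}$ and corestricting to the operator subsystem $\ost$ generated by $\phi\big(C(S^1)^{(n)}\big)$ and $1_{\B(\H)}$ preserves positivity at all matrix levels in the sense required, and that complete positivity of the corestricted map is equivalent to complete positivity of $\phi_0$ as a map into $\B(\H)$ — both of which are routine since $\ost$ is a unital operator subsystem of $\B(\H)$, hence an operator system in the inherited matrix order. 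With that in hand the proof is essentially a two-line citation of Corollary~\ref{main result dual}, Theorem~\ref{lit}(2), and Lemma~\ref{abelian dual}.

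I would also remark that this gives a conceptual explanation of why the decomposability discussion in the introduction is a red herring: the complete positivity here has nothing to do with writing $\phi_0$ as a sum of completely positive and completely co-positive pieces, but rather stems entirely from the fact that the operator system dual of $C(S^1)^{(n)}$ sits inside an abelian C$^*$-algebra, forcing every positive map on $C(S^1)^{(n)}$ to be completely positive regardless of the behaviour of $\phi$ on the rest of $\M_n(\mathbb C)$.
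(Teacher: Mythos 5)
Your proposal is correct and follows essentially the same route as the paper: both arguments reduce the statement to Lemma \ref{abelian dual} applied with $\osr=C(S^1)^{(n)}$, using Theorem \ref{main result proof} (equivalently Corollary \ref{main result dual}) and Theorem \ref{lit}(2) to see that $\cstare\bigl((C(S^1)^{(n)})^d\bigr)\cong\cstare\bigl(C(S^1)_{(n)}\bigr)=C(S^1)$ is abelian. The only difference is that your preliminary corestriction to a finite-dimensional operator subsystem of $\B(\H)$ is already built into the statement and proof of Lemma \ref{abelian dual}, so it need not be repeated.
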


\begin{proof} The stated assertion is a direct consequence of
 Lemma \ref{abelian dual}, Theorem \ref{main result proof}, and the fact that
the C$^*$-envelope of $\left(C(S^1)^{(n)}\right)^d\simeq C(S^1)_{(n)}$ is abelian.
\end{proof}

To illustrate this result above, consider the indecomposable positive linear  
map $\psi:\M_3(\mathbb C)\rightarrow \M_3(\mathbb C)$
(known as the Choi map; see \cite{tanahashi--tomiyama1988}) given by
\[
\psi\left(
\left[ \begin{array}{ccc} 
\alpha_{11} & \alpha_{12} & \alpha_{13} \\
\alpha_{21} & \alpha_{22} & \alpha_{23} \\
\alpha_{31} & \alpha_{32} & \alpha_{33} \\
\end{array}\right] 
\right) 
=
\left[ \begin{array}{ccc} 
\alpha_{11}+\alpha_{33}    &- \alpha_{12} & -\alpha_{13} \\
-\alpha_{21} & \alpha_{22}+\alpha_{11}&- \alpha_{23} \\
-\alpha_{31} & -\alpha_{32} &  \alpha_{33}+\alpha_{22}\\
\end{array}\right] .
\]
Note that, 
if $x\in \M_3(\mathbb C)$ is a Toeplitz matrix, then $\psi(x)=x\circ g$, the Schur-Hadamard
product of $x$ with the matrix
\[
g= 
\left[ \begin{array}{ccc} 
2 & -1& -1 \\
-1 & 2& -1 \\
-1 & -1 & 2 \\
\end{array}\right] .
\]
By the Ger\v sgorin circle theorem, the eigenvalues of the symmetric matrix $g$ are nonnegative, and so the mapping $\psi$
coincides on $C(S^1)^{(3)}$ with the completely 
positive map on $\M_3(\mathbb C)$ given by the Schur-Hadamard multiplication 
of $y\in\M_3(\mathbb C)$ by the
positive (semidefinite) matrix $g$.

Theorem \ref{acp proof} also admits a version for Toeplitz matrices over nuclear C$^*$-algebras: see Corollary \ref{nuclear2}.

Theorem \ref{main result proof} also leads to an 
alternative proof of the following result first established in \cite{farenick--mastnak--popov2016}  
concerning the structure of unital linear isometries on the operator system of Toeplitz matrices.

\begin{proposition}\label{ti} If $\phi:C(S^1)^{(n)}\rightarrow M_n(\mathcal C)$ is a unital linear isometry, 
then there exists a unitary $v$ such that $\phi(x)=v^*xv$ for every $x\in C(S^1)^{(n)}$.
\end{proposition}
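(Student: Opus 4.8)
The plan is to reduce the statement to the rigidity of unital complete order isomorphisms by first upgrading the hypothesis from ``unital linear isometry'' to ``unital complete order isomorphism onto its range.'' First I would recall that a unital linear isometry between operator systems need not be completely positive in general, but here the domain is $C(S^1)^{(n)}$, which by Theorem~\ref{acp proof} has the property that every positive linear map out of $\M_n(\mathbb C)$ restricts to a completely positive map on it. So the strategy is: (1) show a unital \emph{isometry} $\phi$ on $C(S^1)^{(n)}$ is automatically unital and positive (and hence, by the machinery already in the paper, completely positive with completely positive inverse), and (2) invoke hyperrigidity (Proposition~\ref{hyperrigid}) together with the Choi--Effros embedding into the C$^*$-envelope $\cstare(C(S^1)^{(n)})=\M_n(\mathbb C)$ (Theorem~\ref{lit}(1)) to conclude $\phi$ extends to a $*$-automorphism of $\M_n(\mathbb C)$, which by the Skolem--Noether theorem is inner, i.e.\ a unitary similarity.

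For step (1), the key observation is the standard fact (Paulsen) that a unital linear map $\phi$ between operator systems is positive provided it is contractive, and is a complete order isomorphism onto its range provided it is a complete isometry. Here $\phi$ is given to be a unital isometry; the subtlety is passing from isometry to \emph{complete} isometry. This is exactly where Theorem~\ref{acp proof} enters: viewing $\phi$ as a unital positive map $C(S^1)^{(n)}\to\M_n(\mathbb C)$, extend it (using injectivity of $\M_n(\mathbb C)$, Arveson) to a positive, hence — by Theorem~\ref{acp proof} applied to the extension restricted back to $C(S^1)^{(n)}$ — actually a completely positive unital map $\widetilde\phi:\M_n(\mathbb C)\to\M_n(\mathbb C)$; the same argument applied to $\phi^{-1}$ (a unital isometry of $\phi(C(S^1)^{(n)})$ onto $C(S^1)^{(n)}$, which one checks is again an operator subsystem of $\M_n(\mathbb C)$ containing the shift, hence has abelian-dual and satisfies the conclusion of Theorem~\ref{acp proof}) shows $\phi^{-1}$ is completely positive. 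Therefore $\phi$ is a unital complete order isomorphism of $C(S^1)^{(n)}$ onto its range.

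Now for step (2): hyperrigidity of $C(S^1)^{(n)}$ in $\M_n(\mathbb C)$ means that the unital completely positive map $C(S^1)^{(n)}\hookrightarrow\M_n(\mathbb C)\xrightarrow{\;\mathrm{id}\;}\M_n(\mathbb C)$ has a unique ucp extension. Apply this to the ucp map obtained by composing the complete order embedding $\phi:C(S^1)^{(n)}\to\phi(C(S^1)^{(n)})\subseteq\M_n(\mathbb C)$ with a ucp extension of its inverse, and dually to $\phi^{-1}$; together these force $\phi$ to extend to a unital $*$-homomorphism $\Phi:\M_n(\mathbb C)\to\M_n(\mathbb C)$ with unital $*$-homomorphism inverse, i.e.\ a $*$-automorphism. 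By Skolem--Noether every $*$-automorphism of $\M_n(\mathbb C)$ is $\ad v$ for a unitary $v$, so $\phi(x)=\Phi(x)=v^*xv$ for all $x\in C(S^1)^{(n)}$, as required.

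The main obstacle is step (1) — specifically, making rigorous that the inverse map $\phi^{-1}$, defined only on the range operator system $\phi(C(S^1)^{(n)})$, also falls under the hypotheses of Theorem~\ref{acp proof}. The cleanest route is to avoid that issue entirely by arguing that \emph{any} unital isometry of an operator system into $\M_n(\mathbb C)$ whose domain has the ``positive implies completely positive'' property of Theorem~\ref{acp proof} is automatically a complete isometry: a unital contractive $\phi$ is positive, positivity gives complete positivity on the domain by Theorem~\ref{acp proof}, and then the dual map $\phi^d:\M_n(\mathbb C)^d\to (C(S^1)^{(n)})^d\simeq C(S^1)_{(n)}$ is completely positive because its range sits in an abelian C$^*$-algebra (Lemma~\ref{cp1}); unwinding shows $\phi$ has completely positive inverse on its range. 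I expect this dual-map argument, run in parallel with the isometry hypothesis to pin down that $\phi^{-1}$ is also unital and contractive, to be the delicate bookkeeping; once it is in place, hyperrigidity plus Skolem--Noether finish the proof immediately.
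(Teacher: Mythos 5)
Your step (1) contains a genuine gap, and it is exactly the point where the paper has to do real work. Getting from ``unital isometry'' to ``positive'' and then to ``completely positive'' via Theorem~\ref{acp proof} is fine (positivity of $\phi$ follows because a unital contraction between operator systems is positive, and in fact the paper argues it via numerical ranges). The problem is the inverse. Your ``unwinding'' argument --- that $\phi^d:\M_n(\mathbb C)^d\to (C(S^1)^{(n)})^d\simeq C(S^1)_{(n)}$ is completely positive by Lemma~\ref{cp1}, hence $\phi$ has completely positive inverse --- does not work: complete positivity of $\phi^d$ is \emph{equivalent} to complete positivity of $\phi$ and says nothing about $\phi^{-1}$. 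The paper stresses in the introduction that a completely positive bijection need not have completely positive inverse, and Section~4 exhibits a concrete ucp bijection $\psi_n:C(S^1)_{(n)}\to C(S^1)^{(n)}$ whose inverse is not even positive. Your fallback, applying Theorem~\ref{acp proof} to $\phi^{-1}$, fails because its domain is $\phi(C(S^1)^{(n)})$, which you have no reason to believe contains the shift $s$ (it contains $\phi(s)$, an a priori arbitrary contraction) or to be completely order isomorphic to a Toeplitz system; Lemma~\ref{abelian dual} would require knowing that $\cstare\bigl(\phi(C(S^1)^{(n)})^d\bigr)$ is abelian, which is circular. A smaller issue: Arveson's extension theorem extends \emph{completely} positive maps, not merely positive ones, so the order of operations in your extension step needs care.

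The missing ingredient is precisely what the paper supplies: a unital isometry preserves numerical ranges, and by Wu's theorem the numerical range of the nilpotent Jordan block $s$ determines it up to unitary equivalence among contractions, so $\phi(s)=v^*sv$ for some unitary $v$. With that unitary in hand, the paper considers a ucp extension of $y\mapsto v\phi(y)v^*$ to $\M_n(\mathbb C)$; its fixed-point set contains the irreducible pair $s,s^*$, so Arveson's Boundary Theorem forces it to be the identity, giving $\phi=\mathrm{Ad}\,v$ outright --- no separate verification that $\phi^{-1}$ is completely positive is ever needed. Your step (2) (C$^*$-envelope rigidity or hyperrigidity plus Skolem--Noether) would be a reasonable finish \emph{if} you had first established that $\phi$ is a unital complete order embedding, but as written the proposal never gets there.
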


\begin{proof} Let $W(y)$ denote the numerical range of a matrix $y\in \M_n(\mathbb C)$:
\[
W(y)=\bigcap_{\alpha,\beta\in\mathbb C}\left\{\lambda\in\mathbb C\,|\,|\alpha\lambda+\beta| \leq \|\alpha y +\beta 1_n\|\right\}.
\]
Thus, $W\left(\phi(x)\right)=W(x)$ for every $x\in C(S^1)^{(n)}$, 
since $\phi:C(S^1)^{(n)}\rightarrow M_n(\mathcal C)$ is a unital linear isometry.
As this is also the case for the lower-triangular nilpotent shift matrix $s\in C(S^1)^{(n)}$, the numerical ranges of the 
contractions $s$ and $\phi(s)$ coincide;
hence, by a theorem of Wu \cite{wu1998},
there is a unitary $v$ such that
then $\phi(s)=v^*sv$.

Because an element $x\in C(S^1)^{(n)}$ is positive if and only if the numerical range of $x$ is contained in $[0,\infty)$, 
the unital linear isometry $\phi$ preserves positivity. Hence, by Theorem \ref{acp proof}, $\phi$ is a completely positive linear map.
Now consider the unital completely positive map $\psi:\M_n(\mathbb C)\rightarrow \M_n(\mathbb C)$
defined by $\psi(y)=v\phi(y)v^*$. The fixed point set $\mathfrak F^{\psi}=\{y\in\M_n(\mathbb C)\,|\,\psi(y)=y\}$ contains $s$ and $s^*$,
and so $\mathfrak F^{\psi}$ is an irreducible operator subsystem of $\M_n(\mathbb C)$ such that the restriction of $\psi$ to
$\mathfrak F^{\psi}$ is the identity map. This implies, by Arveson's Boundary Theorem \cite{arveson1972,arveson2008,farenick2011b},
$\psi(y)=y$ for all $y\in M_n(\mathbb C)$; in particular, $\phi(x)=v^*xv$, for every $x\in C(S^1)^{(n)}$.
\end{proof}

Let $\A(S^1)^{(n)}$ denote the set of \emph{analytic Toeplitz matrices}, by which is meant
those Toeplitz matrices that are lower-triangular. Note that $\A(S^1)^{(n)}$ is a unital abelian algebra generated by the
shift $s$.

\begin{proposition}\label{ati} If $\phi:\A(S^1)^{(n)}\rightarrow \M_{n}(\mathcal C)$ is a 
unital isometry, then there exists a unitary $v$ such that $\phi(a)=v^*av$ for every $a\in\A(S^1)^{(n)}$.
\end{proposition}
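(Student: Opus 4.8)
The plan is to follow the scheme of Proposition~\ref{ti}, after one preliminary reduction moving the problem from the nonselfadjoint algebra $\A(S^1)^{(n)}$ to the operator system $C(S^1)^{(n)}$. Since $\phi$ is unital and linear, $\alpha\phi(a)+\beta 1_n=\phi(\alpha a+\beta 1_n)$ for all $a\in\A(S^1)^{(n)}$ and all $\alpha,\beta\in\mathbb{C}$, so the isometry hypothesis yields $\|\alpha\phi(a)+\beta 1_n\|=\|\alpha a+\beta 1_n\|$ and hence, via the intersection formula for the numerical range used in the proof of Proposition~\ref{ti}, $W(\phi(a))=W(a)$ for every $a\in\A(S^1)^{(n)}$. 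In particular $W(\phi(s))=W(s)$ for the nilpotent shift $s$, so Wu's theorem \cite{wu1998} supplies a unitary $v$ with $\phi(s)=v^*sv$. Replacing $\phi$ by $a\mapsto v\phi(a)v^*$, I may assume $\phi(s)=s$; it then suffices to prove that this normalised $\phi$ is the identity map on $\A(S^1)^{(n)}$.

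The next step is to extend $\phi$ to a map on $C(S^1)^{(n)}$. Each $x\in C(S^1)^{(n)}$ is of the form $x=a+b^*$ with $a,b\in\A(S^1)^{(n)}$, and two such representations differ only by a scalar, so the prescription $\tilde\phi(a+b^*)=\phi(a)+\phi(b)^*$ defines a unital linear map $\tilde\phi:C(S^1)^{(n)}\to\M_n(\mathbb{C})$ that extends $\phi$ and commutes with the adjoint. The crucial claim is that $\tilde\phi$ is positive. A selfadjoint element of $C(S^1)^{(n)}$ has the form $x=a+a^*$ for some $a\in\A(S^1)^{(n)}$, and since $\operatorname{Re}\langle a\xi,\xi\rangle=\tfrac12\langle(a+a^*)\xi,\xi\rangle$ one has $x\geq0$ exactly when $W(a)$ lies in the closed right half-plane, exactly when $W(\phi(a))$ does (by the numerical-range identity above), exactly when $\tilde\phi(x)=\phi(a)+\phi(a)^*\geq0$. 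Since $C(S^1)^{(n)}$ is a finite-dimensional operator system whose dual $\left(C(S^1)^{(n)}\right)^d\simeq C(S^1)_{(n)}$ has abelian C$^*$-envelope (Corollary~\ref{main result dual} and Theorem~\ref{lit}(2)), Lemma~\ref{abelian dual} upgrades the positive unital map $\tilde\phi$ to a unital completely positive map.

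To conclude I would reproduce the boundary-representation argument of Proposition~\ref{ti}. By Arveson's extension theorem $\tilde\phi$ extends to a unital completely positive map $\Psi:\M_n(\mathbb{C})\to\M_n(\mathbb{C})$, and $\Psi$ fixes $1_n$, $s$, and $s^*$. Hence the fixed-point set of $\Psi$ contains the operator subsystem $\operatorname{span}\{1_n,s,s^*\}$, which is irreducible (its commutant equals $\{s,s^*\}'=\mathbb{C}1_n$) and generates $\M_n(\mathbb{C})$ as a C$^*$-algebra, so Arveson's Boundary Theorem \cite{arveson1972,arveson2008,farenick2011b} forces $\Psi=\operatorname{id}$. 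Restricting to $\A(S^1)^{(n)}$ shows the normalised $\phi$ is the identity, i.e.\ the original $\phi$ satisfies $\phi(a)=v^*av$ for all $a\in\A(S^1)^{(n)}$.

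I expect the positivity of $\tilde\phi$ to be the step requiring the most care: one must verify that $\tilde\phi$ is well defined (the ambiguity in writing $x=a+b^*$ is a scalar, which unitality absorbs) and that a matrix $a$ has numerical range in the closed right half-plane precisely when $a+a^*\geq0$. After that, the remaining ingredients --- Corollary~\ref{main result dual}, Theorem~\ref{lit}, Lemma~\ref{abelian dual}, Arveson's extension theorem, and Arveson's Boundary Theorem --- are invoked exactly as in the proof of Proposition~\ref{ti}.
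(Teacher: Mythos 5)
Your proof is correct and follows essentially the same route as the paper: extend $\phi$ to the map $\tilde\phi(a+b^*)=\phi(a)+\phi(b)^*$ on $C(S^1)^{(n)}$, establish that $\tilde\phi$ is unital completely positive, and then run the numerical-range/Wu/Boundary-Theorem argument of Proposition \ref{ti}. The only difference is cosmetic: you verify well-definedness and positivity of $\tilde\phi$ by hand (via the numerical-range criterion for $a+a^*\geq 0$) and then upgrade to complete positivity with Lemma \ref{abelian dual}, whereas the paper cites Propositions 2.12 and 3.5 of \cite{Paulsen-book} for this step; both are sound.
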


\begin{proof}
Because the operator system  
$C(S^1)^{(n)}=\{a+b^*\,|\,a,b\in\A(S^1)^{(n)}\}$, the mapping $\tilde\phi:C(S^1)^{(n)}\rightarrow \M_{n}(\mathcal C)$
given by 
\[
\tilde\phi(a+b^*)=\phi(a)+\phi(b)^*  
\]
is well-defined and determines a unital completely positive linear map 
\cite[Proposition 2.12, 3.5]{Paulsen-book}. Further, because $\|\alpha\tilde\phi(s)+\beta 1\|=\|\alpha s + \beta 1 \|$ for all
$\alpha,\beta\in\mathbb C$ (by hypothesis), $\tilde\phi(s)$ is a matrix of unit norm with numerical range equal to that of the shift $s$.
Applying the proof of Proposition \ref{ti} to $\tilde\phi$, we obtain the stated structure for $\tilde\phi$ and, hence, for $\phi$.
\end{proof}

\section{Positivity of Block Toeplitz Matrices via Tensor Products}\label{S:otp}

If $\oss$ is any operator system, then with respect to 
the linear basis $\{r_{-n+1},\dots,r_{n-1}\}$ of $C(S^1)^{(n)}$ identified earlier,
an arbitrary element $x$ of
the algebraic tensor product $C(S^1)^{(n)}\otimes\oss$ can be written as
\[
x=\sum_{\ell=\-n+1}^{n-1} r_\ell\otimes s_\ell,
\]
for some $s_{-n+1}, \dots,s_{n-1}\in \oss$. Hence, the 
algebraic tensor product $C(S^1)^{(n)}\otimes\oss$ 
is naturally identified with the $*$-closed complex vector space of
$n\times n$ Toeplitz matrices with entries from $\oss$, whereby the element $x$ above is represented as
\begin{equation}\label{tms}
x=\sum_{\ell=\-n+1}^{n-1} r_\ell\otimes s_\ell
=\left[ \begin{array}{cccccc} 
s_0 & s_{-1} & s_{-2} &  \dots&  s_{-n+2}& s_{-n+1} \\
s_1 & s_0 & s_{-1} & s_{-2}& \dots & s_{-n+2} \\
s_2 & s_1 & s_0 & s_{-1} &\ddots&   \vdots\\
\vdots & \ddots & \ddots & \ddots &\ddots &s_{-2} \\
s_{n-2} &   &  \ddots & \ddots &\ddots & s_{-1} \\
s_{n-1} & s_{n-2} & \dots & s_{2} &s_{1}&s_0
\end{array}
\right],
\end{equation}
Such elements $x\in  C(S^1)^{(n)}\otimes\oss$ 
are, therefore, called \emph{block Toeplitz matrices}. 

The purpose of this section is to consider various ways in which block Toeplitz matrices 
$x$ can be said to be ``positive,'' particularly in the cases where 
$\oss=\M_m(\mathbb C)$ or $\oss=C(S^1)^{(m)}$, as these cases are of special interest in applied mathematics
(for example, \cite{jeuris--vandebril2016}).

With the case $\oss=\M_m(\mathbb C)$, the positivity of block Toeplitz matrices is determined 
by way of the following
theoretical criterion, which is a direct consequence of (the proof of)
Theorem \ref{main result proof}. 

\begin{proposition}\label{pos bt} 
The following statements are equivalent for a matrix $T\in\M_n(\M_m(\mathbb C))$ of the form
\[
T= 
\left[ \begin{array}{cccccc} 
\tau_0 & \tau_{-1} & \tau_{-2} &  \dots&  \tau_{-n+2}& \tau_{-n+1} \\
\tau_1 & \tau_0 & \tau_{-1} & \tau_{-2}& \dots & \tau_{-n+2} \\
\tau_2 & \tau_1 & \tau_0 & \tau_{-1} &\ddots&   \vdots\\
\vdots & \ddots & \ddots & \ddots &\ddots &\tau_{-2} \\
\tau_{n-2} &   &  \ddots & \ddots &\ddots & \tau_{-1} \\
\tau_{n-1} & \tau_{n-2} & \dots & \tau_{2} &\tau_{1}& \tau_0
\end{array}
\right],
\]
where each $\tau_k\in\M_m(\mathbb C)$:
\begin{enumerate}
\item $T$ is positive;
\item for every positive function $F:S^1\rightarrow \M_m(\mathbb C)$ of the form $F(z)=\displaystyle\sum_{k=-n+1}^{n-1}a_kz^k$,
for some matrices $a_k\in\M_m(\mathbb C)$, the matrix
\[
\sum_{k=-n+1}^{n-1}\tau_{-k}\circ a_k
\]
is positive. 
\end{enumerate}
\end{proposition}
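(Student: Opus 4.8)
The plan is to read off the result from the explicit computation already carried out in the proof of Theorem \ref{main result proof}. In the notation used there, write $\tilde T=T\otimes 1_m$ for the ampliation whose $(k,\ell)$-block is $\tau_{k-\ell}\otimes 1_m$, so that $T$ is positive if and only if $\tilde T$ is positive, and recall the identity in equation (\ref{e:q}), namely $\langle q\xi,\xi\rangle=\langle\tilde T\eta,\eta\rangle$, where $q=\sum_\ell\tau_{-\ell}\otimes a_\ell\in\M_m(\mathbb{C})\otimes\M_m(\mathbb{C})$, $\eta=\bigoplus_{i=0}^{n-1}\eta_i$ with $\eta_i=(1_m\otimes b_i)\xi$, and $v^*qv=\sum_k\tau_{-k}\circ a_k$ for the Schur--Hadamard isometry $v$. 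The implication (1)$\Rightarrow$(2) is then contained verbatim in that proof: if $T$, hence $\tilde T$, is positive, then for a positive $F(z)=\sum_k a_kz^k$ one applies the operator-valued Riesz-Fej\'er theorem to write $F=H^*H$ with $H(z)=\sum_{j=0}^{n-1}b_jz^j$, so that $a_\ell=\sum_{j\in I_\ell}b_j^*b_{\ell+j}$ as in (\ref{e:rf}); since $\langle q\xi,\xi\rangle=\langle\tilde T\eta,\eta\rangle\ge0$ for every $\xi$, the element $q$ is positive and therefore so is its compression $v^*qv=\sum_k\tau_{-k}\circ a_k$.

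For (2)$\Rightarrow$(1) the single new point is to run the same computation in reverse using a good supply of test vectors. Let $\Omega=\sum_k e_k\otimes e_k\in\mathbb{C}^m\otimes\mathbb{C}^m$ be the maximally entangled vector, so $\Omega=v\zeta_0$ with $\zeta_0=\sum_k e_k\in\mathbb{C}^m$. The elementary observation is that, as $b$ ranges over $\M_m(\mathbb{C})$, the vectors $(1_m\otimes b)\Omega$ exhaust $\mathbb{C}^m\otimes\mathbb{C}^m$; hence an arbitrary $\eta=\bigoplus_{i=0}^{n-1}\eta_i$ in the direct sum of $n$ copies of $\mathbb{C}^m\otimes\mathbb{C}^m$ may be written with $\eta_i=(1_m\otimes b_i)\Omega$ for suitable $b_0,\dots,b_{n-1}\in\M_m(\mathbb{C})$. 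Given such an $\eta$, put $H(z)=\sum_{j=0}^{n-1}b_jz^j$ and $F(z)=H(z)^*H(z)=\sum_\ell a_\ell z^\ell$, a positive element of $C(S^1)_{(n)}\otimes\M_m(\mathbb{C})$; by (2) the matrix $\sum_k\tau_{-k}\circ a_k=v^*qv$ is positive. Then equation (\ref{e:q}) gives
\[
\langle\tilde T\eta,\eta\rangle=\langle q\Omega,\Omega\rangle=\langle v^*qv\,\zeta_0,\zeta_0\rangle\ge0 ,
\]
and since $\eta$ was arbitrary this shows $\tilde T=T\otimes 1_m\ge0$, hence $T\ge0$.

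I do not expect a serious obstacle: the Riesz-Fej\'er factorization and the Schur--Hadamard isometry, both already established in the proof of Theorem \ref{main result proof}, carry all of the analytic content, and the only genuinely new ingredient is the remark that the maximally entangled vector produces enough test vectors to detect positivity of $\tilde T$. The one thing to watch is bookkeeping: keeping the sign conventions ($\tau_{-k}$ versus $\tau_k$), the index sets $I_\ell$ of (\ref{e:I_j}), and the ampliation $\tilde T=T\otimes 1_m$ versus $T$ itself consistent with the conventions fixed in the proof of Theorem \ref{main result proof}, so that the forward and backward computations align exactly. One could alternatively phrase the argument abstractly, noting that $\phi^{[m]}$ is a complete order isomorphism and that positivity of $T$ is therefore equivalent to positivity of $\phi^{[m]}(T)$ in the dual cone of $\bigl(C(S^1)_{(n)}\bigr)^d\otimes\M_m(\mathbb{C})$, which unwinds to condition (2) through the Schur--Hadamard pairing; but this reduces to the same two computations.
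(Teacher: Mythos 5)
Your proof is correct. The forward implication (1)$\Rightarrow$(2) is, as you say, exactly the computation already carried out in the proof of Theorem \ref{main result proof} (Riesz--Fej\'er factorization, the identity (\ref{e:q}), and compression by the Schur--Hadamard isometry), and this is also how the paper reads off that half of the proposition. Where you genuinely diverge is in (2)$\Rightarrow$(1): the paper obtains the converse abstractly, from the fact that $\phi^{-1}$ is completely positive --- which it proves by a pure-state/Krein--Milman argument followed by dualizing into the abelian C$^*$-algebra $C(S^1)$ --- together with the standard identification of condition (2) with positivity of $\phi^{[m]}(T)$ in the matrix ordering of $\bigl(C(S^1)_{(n)}\bigr)^d$. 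Your argument instead runs the computation (\ref{e:q}) backwards: the observation that $b\mapsto(1_m\otimes b)\Omega$ is a linear bijection of $\M_m(\mathbb C)$ onto $\mathbb C^m\otimes\mathbb C^m$ shows that the vectors $\eta=\bigoplus_i(1_m\otimes b_i)\Omega$ exhaust the whole space, and since $\Omega=v\zeta_0$, positivity of $v^*qv=\sum_k\tau_{-k}\circ a_k$ at the single vector $\zeta_0$ already yields $\langle\tilde T\eta,\eta\rangle\geq0$. This is a self-contained, elementary verification that avoids both the duality machinery and the complete positivity of $\phi^{-1}$; what it costs is that it is special to the matrix-target case $\oss=\M_m(\mathbb C)$, whereas the paper's route explains the proposition as one instance of the complete order isomorphism and so transfers to the other tensor-product statements in the paper. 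The only bookkeeping points to keep straight --- the sign convention $\tau_{-k}$ versus $\tau_k$, the index sets in (\ref{e:I_j}), and the identification of $\tilde T$ with $T\otimes 1_m$ --- you have handled correctly.
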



\subsection{Operator system tensor products}

One way to approach the positivity question for block Toeplitz matrices for operator systems $\oss$
different from $\M_m(\mathbb C)$
is via tensor products of operator systems.

An \emph{operator system tensor product} \cite{kavruk--paulsen--todorov--tomforde2011}
$\osr\otimes_\sigma\oss$ of operator systems $\oss$ and $\ost$
is an operator system structure $\otimes_\sigma$ on the algebraic tensor product $\osr\otimes\oss$ 
such that:
\begin{enumerate} 
\item $(\osr\otimes\oss, \{\mathcal C_n\}_{n\in\mathbb N}, e_\osr\otimes e_\oss)$ is an operator system, where $\mathcal C_n\subseteq\M_n(\osr\otimes\oss)$, for each $n$,
and $e_\osr$ and $ e_\oss$ denote the Archimedean order units for $\osr$ and $\oss$;
\item $a\otimes b\in \mathcal C_{nm}$, for all $a\in\M_n(\osr)_+$, $b\in\M_m(\oss)_+$, and $n,m\in\mathbb N$;
\item for all $n,m\in\mathbb N$ and all ucp maps $\phi:\osr\rightarrow\M_n(\mathbb C)$ and $\psi:\oss\rightarrow\M_m(\mathbb C)$, the linear map
$\phi\otimes\psi:\osr\otimes_\sigma\oss\rightarrow \M_{nm}(\mathbb C)$ is completely positive.
\end{enumerate}

Suppose that $\osr_1\subseteq\oss_1$ and $\osr_2\subseteq\oss_2$ are inclusions of operator systems. Let $\iota_j:\osr_j\rightarrow\oss_j$ denote the
inclusion maps $\iota_j(x_j)=x_j$ for $x_j\in\oss_j$, $j = 1,2$, so that the map
$\iota_1\otimes\iota_2:\osr_1\otimes\osr_2\rightarrow\oss_1\otimes\oss_2$ is a linear
inclusion of vector spaces. If $\gamma$ and $\sigma$ are operator system structures on $\osr_1\otimes\osr_2$ and $\oss_1\otimes\oss_2$ respectively,
then we use the notation
\[
\osr_1\otimes_\gamma\osr_2\,\subseteq_+\,\oss_1\otimes_\sigma\oss_2
\]
to denote that $\iota_1\otimes\iota_2:\osr_1\otimes_\gamma\osr_2\rightarrow\oss_1\otimes_\sigma\oss_2$ is a
(unital) completely positive map. This notation is motivated by the fact that $\iota_1 \otimes \iota_2$ is a
completely positive map if and only if, for every $p\in\mathbb N$, the cone $\M_p(\osr_1\otimes_\gamma\osr_2)_+$ is
contained in the cone $\M_p(\oss_1\otimes_\sigma \oss_2)_+$.
If, in addition, $\iota_1\otimes\iota_2$ is a complete order isomorphism onto its range, then we write
\[
\osr_1\otimes_\gamma\osr_2\coisubset\oss_1\otimes_\sigma\oss_2\,.
\]
In particular, if $\gamma$ and $\sigma$ are two operator system tensor-product structures on $\osr\otimes\oss$, then
$\osr\otimes_\gamma\oss=\osr\otimes_\sigma\oss$ means that the identity map is a unital complete order isomorphism
(equivalently, that the matrix positivity
cones for $\osr\otimes_\gamma\oss$ and $\osr\otimes_\sigma\oss$ coincide).

\subsection{Minimal and maximal tensor products of Toeplitz matrices}

An operator system tensor product is described by indicating what the matrix positivity cones are.
 
The \emph{minimal tensor product $\omin$} is the familiar spatial tensor product in matrix and operator theory:
if $\osr\subseteq \B(\H)$ and $\oss\subseteq \B(\K)$, where
$\H$ and $\K$ are Hilbert spaces, then $\osr\omin\oss$ is the operator system arising from the
natural inclusion of $\oss\otimes\ost$ into $\B(\H\otimes\K)$. In this regard, Proposition \ref{pos bt} is a characterisation
of the cone
$\left(C(S^1)^{(n)}\omin \M_p(\mathbb C)\right)_+$.

When considering the finite-dimensional operator system $\ost_n$ of Toeplitz operators $T_f$ acting on the Hardy space
$H^2(S^1)$ with symbols $f\in C(S^1)_{(n)}$, the operator system $\ost_n\omin\ost_m$ is the operator system of
two-level Toeplitz operators $T_h$
acting on the Hardy space $H^2(S^1\times S^1)$ of the torus $S^1\times S^1$ using symbols 
from the set $C(S^1\times S^1)_{(n,m)}$
of all continuous functions
$h:S^1\times S^1\rightarrow \mathbb C$ of the form
\[
h(z,w) =\sum_{k=-n+1}^{n-1}\sum_{j=-m+1}^{m-1}\tau_{kj}z^kw^j,
\]
for some $\tau_{kj}\in\mathbb C$.

The \emph{maximal tensor product $\omax$} is the operator system structure on $\osr\otimes\oss$ obtained through
declaring a matrix $x\in\M_p(\osr\otimes\oss)$ to be positive 
if for each $\varepsilon>0$ there are $n,m\in\mathbb N$, $a\in \M_n(\osr)_+$, $b\in\M_m(\oss)_+$, and a linear map 
$\delta:\mathbb C^p\rightarrow\mathbb C^n\otimes\mathbb C^m$
such that
\[
\varepsilon(e_\osr\otimes e_\oss) + x= \delta^*(a\otimes b)\delta.
\]

It is more difficult for a matrix to be ``max positive'' than ``min positive.'' For example, if $x$ is a strictly positive element
of $\oss\omax\osr$, then there exist $N\in\mathbb N$, $A=[a_{ij}]_{i,j}\in \M_N(\oss)_+$, and $B=[b_{ij}]_{i,j}\in \M_N(\osr)_+$
such that $x=\sum_{i,j}a_{ij}\otimes b_{ij}$. 
Therefore, for $\osr\omin\oss=\osr\omax\oss$ to hold, at least one of $\oss$ or $\osr$ ought to be rich in positive elements and matrices.
In general,   
\[
\osr\omax\oss\subseteq_+ \osr\otimes_\sigma\oss \subseteq_+ \osr\omin\oss,
\]
for every operator system tensor product structure $\otimes_\sigma$ on $\osr\otimes\oss$.

The following theorem gives two fundamental results relating the min and max tensor products.

\begin{theorem}\label{tp thms}{\rm (\cite{farenick--paulsen2012,kavruk--paulsen--todorov--tomforde2011})} 
If $\osr$ and $\oss$ are finite-dimensional operator systems and 
$\ost$ is an arbitrary operator system, then
\begin{enumerate}
\item $\ost\omin\M_m(\mathbb C)= \ost\omax\M_m(\mathbb C)$, for all $m\in\mathbb N$, and
\item $\left(\osr\omin\oss\right)^d \simeq \osr^d\omax\oss^d$.
\end{enumerate}
\end{theorem}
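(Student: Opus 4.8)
The plan is to prove the two assertions by separate arguments. For (1), I would first note the standard fact that the minimal tensor product with $\M_m(\mathbb C)$ recovers the matrix amplification $\M_m(\ost)$ with its canonical operator system structure, directly from the spatial description of $\omin$; since $\omax\subseteq_+\omin$ holds for every pair of operator systems, it remains only to show that positivity in $\M_p(\ost\omin\M_m)$ forces positivity in $\M_p(\ost\omax\M_m)$, for each $p$. Under the canonical shuffle $\M_p(\M_m(\ost))\cong\M_{pm}(\ost)$ such a matrix becomes an element $Y\in\M_{pm}(\ost)_+$. The key device is the ``maximally entangled'' matrix $b=\sum_{i,j=1}^m e_{ij}\otimes e_{ij}\in\M_m(\M_m)$, which is positive because $b=\Omega\Omega^*$ for $\Omega=\sum_i e_i\otimes e_i$. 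Indexing $\mathbb C^{pm}=\mathbb C^p\otimes\mathbb C^m$ by pairs, a direct computation with the shuffle isomorphisms shows $Y=\delta^*(Y\otimes b)\delta$, where $\delta\colon\mathbb C^p\to\mathbb C^{pm}\otimes\mathbb C^m$ sends $e_c$ to $\sum_k e_{(c,k)}\otimes e_k$. Taking $Y$ itself in the role of the left tensor factor and $b\in\M_m(\M_m)_+$ in the role of the right one, this is exactly the form required by the definition of $\omax$ (even with $\varepsilon=0$), so $Y$ is max-positive. The only labor here is tracking the shuffle identifications; the content is the choice of $b$.

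For (2), the plan is to first establish the ``dual'' identity $(\osr\omax\oss)^d\simeq\osr^d\omin\oss^d$ for finite-dimensional $\osr,\oss$, and then obtain the stated identity by applying this to the pair $(\osr^d,\oss^d)$ and dualizing, using $\osr^{dd}\simeq\osr$ exactly as in the proof of Corollary \ref{main result dual}; throughout one fixes faithful states on $\osr$ and $\oss$ and uses their product to define the dual order unit, so that all isomorphisms are unital. One inclusion, that $\osr^d\omax\oss^d\to(\osr\omin\oss)^d$ is ucp, is routine: for $\varepsilon>0$ a positive element $x$ of $\M_p(\osr^d\omax\oss^d)$ satisfies $\varepsilon\,e+x=\delta^*(A\otimes B)\delta$ with $A\in\M_n(\osr^d)_+$, $B\in\M_m(\oss^d)_+$, that is, with $A,B$ completely positive maps $\osr\to\M_n$, $\oss\to\M_m$; since the minimal tensor product of completely positive maps is completely positive, $A\otimes B\colon\osr\omin\oss\to\M_{nm}$ is completely positive, and compressing by $\delta$ exhibits the matrix of functionals corresponding to $\varepsilon\,e+x$ as positive in $\M_p((\osr\omin\oss)^d)$; letting $\varepsilon\to0$ (the dual cone being closed) gives the inclusion, and dualizing the resulting ucp map, with $\osr^d,\oss^d$ renamed as arbitrary finite-dimensional operator systems, yields $\osr^d\omin\oss^d\to(\osr\omax\oss)^d$. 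The opposite inclusion is the crux: given a completely positive map $\Phi\colon\osr\omax\oss\to\M_p$, represented by $u_\Phi\in\M_p(\osr^d\otimes\oss^d)$, one tests $u_\Phi$ against every pair of finite-dimensional completely positive maps $\alpha\colon\osr^d\to\M_a$, $\beta\colon\oss^d\to\M_b$; by biduality these correspond to matrices $R\in\M_a(\osr)_+$, $S\in\M_b(\oss)_+$, and a currying computation identifies $(\alpha\otimes\beta)^{[p]}(u_\Phi)$, up to the canonical shuffle, with $\Phi^{[ab]}(R\otimes S)$. Since $R\otimes S\in\M_{ab}(\osr\omax\oss)_+$ by property (2) in the definition of an operator system tensor product, and $\Phi$ is completely positive, this matrix is positive; and maps of the form $\alpha\otimes\beta$ detect positivity at every matrix level of the spatial tensor product $\osr^d\omin\oss^d$, so $u_\Phi\in\M_p(\osr^d\omin\oss^d)_+$. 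Combining the two inclusions gives $(\osr\omax\oss)^d\simeq\osr^d\omin\oss^d$, and then the biduality argument gives $(\osr\omin\oss)^d\simeq\osr^d\omax\oss^d$.

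The step I expect to require the most care is the organization of (2): both directions of the min--max duality are needed, the direct arguments each supply only one inclusion (the other arising by dualization), and one must keep careful track of the order units across the duals and biduals so that the resulting maps are genuinely unital. An alternative, more classical route to the crucial inclusion in (2) — extend $\Phi$ to the C$^*$-algebraic minimal tensor product by Arveson's extension theorem, apply the Stinespring dilation to produce commuting representations of $\osr$ and $\oss$, and then approximate in the point-norm topology by completely positive maps factoring through matrix algebras (available because $\osr$ and $\oss$ are finite-dimensional) — is also workable, but that final finite-dimensional approximation is the delicate point of that approach.
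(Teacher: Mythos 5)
The paper offers no proof of this theorem; it is quoted from the literature, with part (1) due to Kavruk--Paulsen--Todorov--Tomforde and part (2) to Farenick--Paulsen. Your reconstruction is correct and is essentially the argument found in those references. For (1), the identity $x=\delta^*(Y\otimes b)\delta$ with $Y\in\M_{pm}(\ost)_+$ the shuffled matrix and $b=\sum_{i,j}e_{ij}\otimes e_{ij}\in\M_m(\M_m(\mathbb C))_+$ is exactly the standard device, and it does place $x$ in the max cone at level $p$ with no Archimedean correction needed. For (2), your organization --- prove $(\osr\omax\oss)^d\simeq\osr^d\omin\oss^d$ by pairing a cp map $\Phi$ on $\osr\omax\oss$ against cp maps $\alpha,\beta$ on the duals (which by biduality are positive matrices $R,S$ over $\osr,\oss$, so that $R\otimes S$ is max-positive by axiom (2) of operator system tensor products), obtain the reverse inclusion by dualizing the routine one, and then recover the stated identity via $\osr^{dd}\simeq\osr$ --- is the same duality argument as in the cited source; the only points requiring care are the ones you flag, namely that positivity in $\omin$ at each matrix level is detected by pairs of (u)cp maps into matrix algebras, that the dual cones are closed so the $\varepsilon\to0$ limit is harmless in finite dimensions, and that the order units on the duals are fixed by faithful states so all the isomorphisms are unital. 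No gaps.
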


Concerning the maximal tensor product of Toeplitz matrices and operators, we have:

\begin{proposition} For all $n,m\in\mathbb N$,
\[
C(S^1)^{(n)}\omax C(S^1)^{(m)} \simeq
\left( C(S^1)_{(n)}\omin C(S^1)_{(m)}\right)^d
\simeq 
\left(\ost_{(n)}\omin\ost_{(m)}\right)^d
\]
and
\[
\ost_{(n)}\omax\ost_{(m)}\simeq (C(S^1)^{(n)})^d\omax (C(S^1)^{(m)})^d
\simeq\left(C(S^1)^{(n)}\omin C(S^1)^{(m)}\right)^d.
\]
\end{proposition}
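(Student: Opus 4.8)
The plan is to deduce both strings of isomorphisms by chaining together the duality identifications already in hand with the functoriality of the minimal and maximal tensor products and with the duality formula of Theorem~\ref{tp thms}(2). First I would record the two structural facts that make such chaining legitimate. Every operator system occurring in the statement is finite-dimensional: $\dim C(S^1)_{(k)} = \dim C(S^1)^{(k)} = 2k-1$, and $\ost_{(k)}\simeq C(S^1)_{(k)}$ by Proposition~\ref{to}; hence Theorem~\ref{tp thms}(2) applies to any pair drawn from this list, and --- as in the proof of Corollary~\ref{main result dual} --- each such system is canonically unitally completely order isomorphic to its bidual. Moreover, the minimal and maximal tensor products are functorial for unital complete order isomorphisms \cite{kavruk--paulsen--todorov--tomforde2011}: if $\alpha\colon\osr_1\to\osr_2$ and $\beta\colon\oss_1\to\oss_2$ are unital complete order isomorphisms, then $\alpha\otimes\beta$ induces unital complete order isomorphisms $\osr_1\omin\oss_1\simeq\osr_2\omin\oss_2$ and $\osr_1\omax\oss_1\simeq\osr_2\omax\oss_2$; and passing to the operator system dual also preserves $\simeq$, since the dual of a unital complete order isomorphism is completely positive with completely positive inverse and carries the designated order unit to the designated one.

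For the first display I would apply Theorem~\ref{tp thms}(2) with $\osr = C(S^1)_{(n)}$ and $\oss = C(S^1)_{(m)}$ to obtain
\[
\bigl(C(S^1)_{(n)}\omin C(S^1)_{(m)}\bigr)^d \simeq \bigl(C(S^1)_{(n)}\bigr)^d\omax\bigl(C(S^1)_{(m)}\bigr)^d .
\]
Since $\bigl(C(S^1)_{(k)}\bigr)^d\simeq C(S^1)^{(k)}$ for $k = n,m$ by Theorem~\ref{main result proof}, functoriality of $\omax$ rewrites the right-hand side as $C(S^1)^{(n)}\omax C(S^1)^{(m)}$, which yields the first isomorphism. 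For the remaining term I would use $C(S^1)_{(k)}\simeq\ost_{(k)}$ ($k = n,m$) from Proposition~\ref{to}, apply functoriality of $\omin$ to get $C(S^1)_{(n)}\omin C(S^1)_{(m)}\simeq\ost_{(n)}\omin\ost_{(m)}$, and then pass to duals to conclude
\[
\bigl(C(S^1)_{(n)}\omin C(S^1)_{(m)}\bigr)^d\simeq\bigl(\ost_{(n)}\omin\ost_{(m)}\bigr)^d .
\]

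For the second display I would first combine Proposition~\ref{to} with Corollary~\ref{main result dual} to obtain $\ost_{(k)}\simeq C(S^1)_{(k)}\simeq\bigl(C(S^1)^{(k)}\bigr)^d$ for $k = n,m$, so that functoriality of $\omax$ gives
\[
\ost_{(n)}\omax\ost_{(m)}\simeq\bigl(C(S^1)^{(n)}\bigr)^d\omax\bigl(C(S^1)^{(m)}\bigr)^d ;
\]
then Theorem~\ref{tp thms}(2), now with $\osr = C(S^1)^{(n)}$ and $\oss = C(S^1)^{(m)}$, gives
\[
\bigl(C(S^1)^{(n)}\bigr)^d\omax\bigl(C(S^1)^{(m)}\bigr)^d\simeq\bigl(C(S^1)^{(n)}\omin C(S^1)^{(m)}\bigr)^d ,
\]
completing the chain. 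I do not expect any step to present a genuine obstacle; the only point requiring care is the bookkeeping that each $\simeq$ so produced is a \emph{unital} complete order isomorphism --- that is, that the designated Archimedean order units (the identity matrices, the faithful states $\mathfrak e_0$, and the canonical order units of the tensor products) are matched by every isomorphism in the chain --- and this is precisely where the finite-dimensionality of all the systems involved is used.
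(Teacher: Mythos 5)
Your proof is correct and follows exactly the route the paper intends: the paper's own proof simply states that the proposition is an immediate consequence of Proposition~\ref{to}, Theorem~\ref{main result proof}, and Theorem~\ref{tp thms}, and your write-up supplies precisely the chaining of those three results (via functoriality of $\omin$ and $\omax$ and passage to duals) that the paper leaves implicit.
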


\begin{proof} This is an immediate consequence of Proposition \ref{to}
and Theorems \ref{main result proof} and \ref{tp thms}.
\end{proof}

Theorem \ref{tp thms} demonstrates that
\[
C(S^1)^{(n)}\omin\M_m(\mathbb C)= C(S^1)^{(n)}\omax\M_m(\mathbb C),
\] 
for all $n,m\in\mathbb N$.
Corollary \ref{min neq max} below shows that if 
we change the entries $s_j$ of the Toeplitz matrix $x$ in (\ref{tms}) from arbitrary
$m\times m$ complex matrices to arbitrary $m\times m$ Toeplitz matrices, then the two forms of matrix
positivity (min and max) are distinct. To prepare for the proof, we require the following lemma.

\begin{lemma} \label{poly roots} Suppose that $p,m\in\mathbb N$ are such 
that $p$ is prime and $p\geq m$. If $\zeta=e^{2\pi i/p}$ and if $f\in C(S^1)_{(m)}$, then
\[
\hat f(0)= \frac{1}{p}\sum_{k=1}^p f(\zeta^k) .
\]
\end{lemma}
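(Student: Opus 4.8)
The plan is to exploit the discrete orthogonality relations for the $p$-th roots of unity. Write $f(z)=\sum_{j=-m+1}^{m-1}\alpha_j z^j$, where $\alpha_j=\hat f(j)$ and in particular $\alpha_0=\hat f(0)$. Substituting $z=\zeta^k$ and summing over $k=1,\dots,p$ gives
\[
\sum_{k=1}^p f(\zeta^k)=\sum_{j=-m+1}^{m-1}\alpha_j\sum_{k=1}^p \zeta^{kj}.
\]
The inner sum is a geometric series: if $p\mid j$ then every term equals $1$ and the sum is $p$; otherwise $\zeta^j\neq 1$ and the sum telescopes to $\zeta^j\,\frac{\zeta^{pj}-1}{\zeta^j-1}=0$ since $\zeta^{pj}=1$.

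The only remaining point is to check which indices $j$ in the range $-m+1\le j\le m-1$ are divisible by $p$. Since $p\ge m$, the inequality $|j|\le m-1<m\le p$ forces $j=0$ to be the unique multiple of $p$ in that range. Hence the double sum collapses to the single term $\alpha_0\cdot p$, and dividing by $p$ yields $\hat f(0)=\frac1p\sum_{k=1}^p f(\zeta^k)$, as claimed. The primality of $p$ is not actually needed for this computation — only $p\ge m$ matters — but it is presumably invoked because the lemma will be applied in a context (constructing a witness distinguishing min and max positivity) where a prime modulus is required for other reasons.

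There is no real obstacle here: the argument is the standard ``sampling recovers low-degree Fourier coefficients'' fact, and the whole proof is the geometric-series evaluation together with the elementary observation about multiples of $p$ in a short interval. If anything, the one spot meriting a word of care is the edge case $m=1$ (where $f$ is constant and the statement is immediate) and making explicit that $\zeta^{kj}$ ranges correctly when $j<0$, i.e. that $\zeta^{-1}=\bar\zeta$ is still a $p$-th root of unity so the same dichotomy applies.
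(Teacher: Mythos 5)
Your proof is correct and follows essentially the same route as the paper's: expand $f=\sum_j \hat f(j)\chi_j$, interchange the two sums, and observe that $\sum_{k=1}^{p}\zeta^{kj}$ vanishes unless $p\mid j$, which for $|j|\le m-1<p$ forces $j=0$. Your side remark is also accurate: the paper justifies the vanishing by using primality to see that $\zeta^{j}$ is a \emph{primitive} $p$-th root of unity, whereas your geometric-series evaluation needs only $\zeta^{j}\neq 1$, so the hypothesis $p\ge m$ alone suffices and the primality is imposed for reasons external to this lemma.
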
 

\begin{proof} Write $f$ as $f(z)=\displaystyle\sum_{\ell=-m+1}^{m-1}\alpha_\ell z^\ell$,
where $\alpha_\ell=\hat f(\ell)$ for each $\ell$. Suppose that $\ell\in \{-m+1,\dots,m-1\}$ is
nonzero. Because $p$ is prime, $\zeta^\ell$ is a primitive $p$-th root of unity, and so 
$\{(\zeta^\ell)^k\,|\,k=1,\dots,p\}$ is the set of all $p$-th roots of unity. Thus,
$\displaystyle\sum_{k=1}^p (\zeta^\ell)^k = 0$. Hence,
\[
\displaystyle\sum_{k=1}^p f(\zeta^k) = 
\displaystyle\sum_{k=1}^p\displaystyle\sum_{\ell=-m+1}^{m-1}\alpha_\ell(\zeta^k)^\ell  
= \displaystyle\sum_{\ell=-m+1}^{m-1}\alpha_\ell\left(\displaystyle\sum_{k=1}^p(\zeta^\ell)^k\right)  
=p\alpha_0,
\]
which completes the proof.
\end{proof}

The following theorem was established in the case $n=m=2$ in  
\cite[Theorem 4.7]{farenick--kavruk--paulsen--todorov2014}; the proof below draws from
the proof of that result.

\begin{theorem}\label{min neq max fs}
$C(S^1)_{(n)}\omin C(S^1)_{(m)} \not= C(S^1)_{(n)}\omax C(S^1)_{(m)}$, for all $n,m\geq2$.
\end{theorem}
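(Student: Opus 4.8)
The plan is to exhibit a single element of the algebraic tensor product $C(S^1)_{(n)}\otimes C(S^1)_{(m)}$ that is positive in the minimal tensor product but not in the maximal one, thereby witnessing $\omin\neq\omax$. Since $\omin$ and $\omax$ agree with matrix algebras by Theorem~\ref{tp thms}(1), any such separating element must genuinely use the non-matricial nature of both factors, so I expect it to be built from the generating unitaries $\chi_1\in C(S^1)_{(n)}$ and $\chi_1\in C(S^1)_{(m)}$ (or their real parts). A natural candidate, following \cite[Theorem 4.7]{farenick--kavruk--paulsen--todorov2014}, is an element of the form $u=\chi_0\otimes\chi_0 + \lambda(\chi_1\otimes\chi_{-1}) + \bar\lambda(\chi_{-1}\otimes\chi_1)$ for a suitably chosen scalar $\lambda$, or more robustly a positive linear combination of the tensors $\chi_k\otimes\chi_{-k}$; viewed as a function on $S^1\times S^1$ this is $h(z,w)=\sum_k \mu_k (z\bar w)^k$, which is manifestly positive as an element of $C(S^1\times S^1)$ provided $\sum_k\mu_k e^{ik\theta}\geq 0$, hence positive in $C(S^1)_{(n)}\omin C(S^1)_{(m)}$.

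Next I would show this element fails to be positive in the maximal tensor product. The cleanest route is duality: by Theorem~\ref{tp thms}(2) and Corollary~\ref{main result dual}, $\left(C(S^1)_{(n)}\omax C(S^1)_{(m)}\right)^d\simeq \left(C(S^1)_{(n)}\right)^d\omin\left(C(S^1)_{(m)}\right)^d\simeq C(S^1)^{(n)}\omin C(S^1)^{(m)}$, so positivity in $\omax$ is tested by pairing against min-positive block Toeplitz matrices, i.e. against positive linear functionals detected by Proposition~\ref{pos bt}. Thus it suffices to produce a positive element $X\in C(S^1)^{(n)}\omin C(S^1)^{(m)}$ (a Toeplitz matrix whose entries are $m\times m$ Toeplitz matrices, positive as an $nm\times nm$ matrix) such that the canonical pairing $\langle X, u\rangle <0$. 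Here is where Lemma~\ref{poly roots} enters: choosing a prime $p\geq\max(n,m)$ and $\zeta=e^{2\pi i/p}$, one can try to build $X$ as a sum of rank-one positive Toeplitz-of-Toeplitz matrices coming from the vectors $(1,\zeta^k,\zeta^{2k},\dots)$, using the Fejér-type averaging of Lemma~\ref{poly roots} to force the pairing to pick out exactly the ``wrong-sign'' Fourier coefficients of $u$ while the positivity constraints (which only see $\hat{}(0)$-type averages) remain satisfied.

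I would then verify the two computations: (i) $X$ is positive in the min tensor product — this should reduce, via Proposition~\ref{pos bt} applied twice (once for each Toeplitz layer) or via exhibiting $X$ explicitly as $\sum_j \eta_j\eta_j^*$ with $\eta_j$ built from roots of unity, to elementary positive-semidefiniteness of Vandermonde-type Gram matrices; and (ii) $\langle X,u\rangle<0$ — this is a finite trigonometric sum that collapses under Lemma~\ref{poly roots}. Both are routine once the right $X$ and $u$ are in hand.

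The main obstacle is precisely the matching of $u$ and $X$: the separating element cannot be ``too simple'' because $\omin$ and $\omax$ coincide against $\M_m(\mathbb C)$, so the construction must exploit that \emph{both} factors are genuine operator subsystems of abelian C$^*$-algebras with the same C$^*$-envelope $C(S^1)$, and it must be arranged so that $X$'s positivity survives the coarse min-criterion of Proposition~\ref{pos bt} while the pairing still detects a defect. Getting the scalar coefficients $\mu_k$ (in $u$) and the weights and root-of-unity nodes (in $X$) to simultaneously satisfy ``$u\geq 0$ on $S^1\times S^1$,'' ``$X\geq 0$ as a scalar matrix,'' and ``$\langle X,u\rangle<0$'' is the delicate balancing act; I would lean heavily on the $n=m=2$ construction of \cite{farenick--kavruk--paulsen--todorov2014} and on Lemma~\ref{poly roots} to make the general-$n,m$ version go through, very likely taking $u$ supported on the ``anti-diagonal'' tensors $\chi_k\otimes\chi_{-k}$ so that the torus symbol depends only on $z\bar w$ and the problem effectively becomes one-dimensional.
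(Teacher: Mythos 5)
Your plan has a step that provably fails, and the construction it defers is exactly the heart of the matter. The candidate separating element you lean toward --- a positive combination $u=\sum_k \mu_k\,\chi_k\otimes\chi_{-k}$ with $g(\zeta):=\sum_k\mu_k\zeta^k\geq 0$ on $S^1$ --- is always \emph{max}-positive, so it can never witness $\omin\neq\omax$. Indeed, by the Fej\'er--Riesz theorem write $g(\zeta)=|q(\zeta)|^2$ with $q(\zeta)=\sum_j c_j\zeta^j$; then
\[
u=\sum_{j,\ell}c_j\bar c_\ell\,\chi_{j-\ell}\otimes\chi_{\ell-j}=\sum_{j,\ell}f_{j\ell}\otimes g_{j\ell},
\qquad f_{j\ell}=(c_j\chi_j)\overline{(c_\ell\chi_\ell)},\quad g_{j\ell}=\chi_{-j}\overline{\chi_{-\ell}},
\]
and both $[f_{j\ell}]$ and $[g_{j\ell}]$ are rank-one positive matrices over $C(S^1)_{(n)}$ and $C(S^1)_{(m)}$ respectively, so $u=\delta^*\bigl([f_{j\ell}]\otimes[g_{j\ell}]\bigr)\delta$ lies in the max cone by definition. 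The ``one-dimensional reduction'' to a symbol in $z\bar w$ therefore collapses: any min-positive element of that anti-diagonal form is automatically separable in the relevant sense. More generally, you are hunting for a witness in the ground-level cone of $C(S^1)_{(n)}\otimes C(S^1)_{(m)}$, but the known separation (and the paper's proof) occurs at the level of $2\times 2$ matrices over the tensor product: the paper imports from \cite[Theorem 4.7]{farenick--kavruk--paulsen--todorov2014} a strictly positive element of $C(S^1)_{(2)}\omin\M_2\bigl(C(S^1)_{(m)}\bigr)$, assumes it admits a max-type decomposition $\sum_{i,j}f_{ij}\otimes g_{ij}$ with $[f_{ij}]$, $[g_{ij}]$ positive, and derives a contradiction by averaging the resulting $4\times4$ matrix over $p$-th roots of unity (Lemma \ref{poly roots}) to produce a scalar matrix that must be, but is not, positive semidefinite. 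You correctly guessed the role of Lemma \ref{poly roots} and of the FKPT construction, but you deploy them in a duality argument whose witness pair $(u,X)$ you never exhibit, and whose most natural candidates are ruled out by the computation above.

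To be clear about what is salvageable: your duality framework is sound in principle --- $\bigl(C(S^1)_{(n)}\omax C(S^1)_{(m)}\bigr)^d\simeq C(S^1)^{(n)}\omin C(S^1)^{(m)}$ does hold for these finite-dimensional systems, and a ground-level element is max-positive iff it pairs nonnegatively with every min-positive block Toeplitz matrix. But you would have to (i) prove that the ground-level cones actually differ (which is not established anywhere in the paper and may fail), or else run the pairing at matrix level $p=2$, and (ii) actually produce the pair $(u,X)$ with $u$ min-positive, $X$ min-positive in the dual, and $\langle X,u\rangle<0$. You acknowledge this is ``the delicate balancing act,'' but that balancing act \emph{is} the theorem; as written, the proposal defers the only nontrivial content and commits to a candidate family that cannot work.
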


\begin{proof}
Assume that
$C(S^1)_{(n)}\omin C(S^1)_{(m)} = C(S^1)_{(n)}\omax C(S^1)_{(m)}$, contrary to what we aim to prove.  

For each $j,n\in\mathbb N$ such that $j\leq n$,
let $\iota_{j,n}:C(S^1)_{(j)}\rightarrow C(S^1)_{(n)}$ 
be the canonical inclusion map, and note that this map is completely positive.
Because the tensor products $\omin$ and $\omax$ are functorial, the linear map
$\iota_{j,n}\otimes \iota_{k,m}$, for $j\le n$ and $k\leq m$, 
is a unital completely positive embedding of 
$C(S^1)_{(j)}\omin C(S^1)_{(k)}$ 
into $C(S^1)_{(n)}\omin C(S^1)_{(m)}$ and of $C(S^1)_{(j)}\omax C(S^1)_{(k)}$ into 
$C(S^1)_{(n)}\omax C(S^1)_{(m)}$.

Identify $\M_2(C(S^1)_{(2)}\omin C(S^1)_{(m)})$ with $C(S^1)_{(2)}\omin\M_2(C(S^1)_{(m)})$
and let $x\in C(S^1)_{(2)}\omin\M_2(C(S^1)_{(m)})$ be given by
\[
x=\chi_0\otimes b_0\,+\,
\chi_1\otimes b_1
\,+\, \chi_{-1}\otimes b_{-1},
\]
where $b_0=3\chi_0$, $b_1=\left[\begin{array}{cc} \chi_1& 0 \\ 2\chi_{-1} &-\chi_1 \end{array}\right]$, and
$b_{-1}=b_1^*$. As shown in \cite[Theorem 4.7]{farenick--kavruk--paulsen--todorov2014},
this matrix-valued function is a positive element of 
$C(S^1)_{(2)}\omin\M_2(C(S^1)_{(m)})$ and the eigenvalues of
$x(z,w)$, for $(z,w)\in S^1\times S^1$, are uniformly bounded below by some $\delta>0$.
Therefore, the
matrix $x$ is strictly positive in 
$C(S^1)_{(n)}\omin\M_2(C(S^1)_{(m)})$, which by hypothesis coincides with
$C(S^1)_{(n)}\omax\M_2(C(S^1)_{(m)})$.

As $x\in C(S^1)_{(n)}\omax\M_2(C(S^1)_{(m)})$ is strictly positive, 
there exist, by \cite[Lemma 2.7]{farenick--kavruk--paulsen--todorov2014}, 
$N\in\mathbb N$ and positive matrices
$F=[f_{ij}]_{ij}\in \M_N(C(S^1)_{(n)})$ and $G=[g_{ij}]_{ij}\in \M_N(C(S^1)_{(m)})$
such that
\[
\chi_0\otimes b_0\,+\,
\chi_1\otimes b_1
\,+\, \chi_{-1}\otimes b_{-1} \,=\,\sum_{i,j=1}^N  f_{ij}\otimes g_{ij}.
\]
Express each $f_{ij}$ as $f_{ij}=\displaystyle\sum_{\ell=-n+1}^{n-1}\alpha_\ell^{(ij)}\chi_\ell$
so that the sum above becomes
\[
 \chi_0\otimes b_0\,+\,
\chi_1\otimes b_1
\,+\, \chi_{-1}\otimes b_{-1} \,=\,\sum_{\ell=-n+1}^{n-1}\chi_\ell\otimes 
\left(\sum_{i,j=1}^N   \alpha_\ell^{(ij)} g_{ij}\right).
\]
Thus, by the linear independence of $\{\chi_\ell\,|\,\ell=-n+1,\dots,n-1\}$, 
\[
\sum_{i,j=1}^N   \alpha_\ell^{(ij)} g_{ij} = 0,
\]
whenever $|\ell|>1$, and
\[
b_0 = \sum_{i,j=1}^N   \alpha_0^{(ij)} g_{ij}  \,\mbox{ and } \,
b_1 = \sum_{i,j=1}^N   \alpha_1^{(ij)} g_{ij} = b_{-1}^*.
\]
If $A_0=[\alpha_0^{(ij)} ]_{i,j=1}^N$ and $A_1= [\alpha_1^{(ij)} ]_{i,j=1}^N$, then
$A_0$ is positive, $A_1$ is hermitian, and,
by the proof of \cite[Lemma 4.6]{farenick--kavruk--paulsen--todorov2014}, there exist 
positive complex matrices
$\Gamma_1=[\gamma_{i,j}^{(1)}]_{i,j=1}^N$ and $\Gamma_2=[\gamma_{i,j}^{(2)}]_{i,j=1}^N$
such that $\Gamma_1+\Gamma_2=A_0$ and 
\[
Y=\left[ \begin{array}{cc} h_1 & b_1 \\ b_{-1} & h_2 \end{array}\right]\in \left(\M_4( C(S^1)_{(m)}\right)_+,
\]
where $h_1,h_2\in \M_2(C(S^1)_{(m)}$ are given by 
\[
h_k=\sum_{i,j}\gamma_{ij}^{(k)}g_{ij}, \mbox{ for }k=1,2,
\]
and satisfy $h_1+h_2=b_0$.

As in the proof of \cite[Theorem 4.7]{farenick--kavruk--paulsen--todorov2014}, $Y$ is unitarily equivalent
to a positive
matrix-valued function of the form
\[
\tilde Y (w) = 
\left[ 
\begin{array}{cccc} h_{11}(w) &  h_{12} & 1 & 0 \\
 h_{21}(w) & h_{22}(w) & 2 & -1 \\
1&2& 3-h_{11}(w) &  h_{12}(w) \\
0&-1&  h_{21}(w) & 3-h_{22}(w)
\end{array}
\right],
\]
for some $h_{ij}\in C(S^1)_{(m)}$ in which $\hat h_{12}(0)=\hat h_{21}(0)=0$. By selecting a prime $p\geq m$
and the primitive $p$-th root of unity $\zeta=e^{2\pi i /p}$, we have, by Lemma \ref{poly roots},
\[
\frac{1}{p}\sum_{k=1}^p \tilde Y(\zeta^k)=
\left[ 
\begin{array}{cccc} \hat h_{11}(0) &  0 & 1 & 0 \\
 0& \hat h_{22}(0) & 2 & -1 \\
1&2& 3-\hat h_{11}(0) &  0 \\
0&-1&  0 & 3-\hat h_{22}(0)
\end{array}
\right].
\]
By hypothesis, this matrix above is positive, since $Y$ and $\tilde Y$ are positive. However, 
as shown in the proof of  
\cite[Theorem 4.7]{farenick--kavruk--paulsen--todorov2014}, it is not.
Therefore, the assumption that 
$C(S^1)_{(n)}\omin C(S^1)_{(m)} = C(S^1)_{(n)}\omax C(S^1)_{(m)}$ leads to a contradiction.
\end{proof}

\begin{corollary}\label{min neq max}
$C(S^1)^{(n)}\omin C(S^1)^{(m)} \not= C(S^1)^{(n)}\omax C(S^1)^{(m)}$, for all $n,m\geq2$.
\end{corollary}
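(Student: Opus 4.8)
The plan is to deduce Corollary~\ref{min neq max} from Theorem~\ref{min neq max fs} by passing to operator system duals, exploiting that duality interchanges $\omin$ with $\omax$ on finite-dimensional operator systems (Theorem~\ref{tp thms}(2)) while interchanging $C(S^1)^{(k)}$ with $C(S^1)_{(k)}$ (Theorem~\ref{main result proof} and Corollary~\ref{main result dual}). I would argue by contradiction, assuming $C(S^1)^{(n)}\omin C(S^1)^{(m)}=C(S^1)^{(n)}\omax C(S^1)^{(m)}$; by the definition recorded before Theorem~\ref{tp thms}, this says that the identity map on $C(S^1)^{(n)}\otimes C(S^1)^{(m)}$ is a unital complete order isomorphism from the $\omin$-structure onto the $\omax$-structure.

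First I would record two dualization identities, both implemented by the canonical identification of $(\osr\otimes\oss)^d$ with $\osr^d\otimes\oss^d$. Since $C(S^1)^{(n)},C(S^1)^{(m)}$ are finite-dimensional, Theorem~\ref{tp thms}(2) gives $\bigl(C(S^1)^{(n)}\omin C(S^1)^{(m)}\bigr)^d\simeq (C(S^1)^{(n)})^d\omax (C(S^1)^{(m)})^d$; applying the same theorem to the (finite-dimensional) duals and dualizing once more, using that a finite-dimensional operator system is canonically isomorphic to its bidual, gives the companion identity $\bigl(C(S^1)^{(n)}\omax C(S^1)^{(m)}\bigr)^d\simeq (C(S^1)^{(n)})^d\omin (C(S^1)^{(m)})^d$. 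Dualizing the assumed identity map and reading it through these two identifications, I would conclude that the identity map on $(C(S^1)^{(n)})^d\otimes (C(S^1)^{(m)})^d$ is a unital complete order isomorphism from $\omin$ onto $\omax$, i.e.\ $(C(S^1)^{(n)})^d\omin (C(S^1)^{(m)})^d=(C(S^1)^{(n)})^d\omax (C(S^1)^{(m)})^d$.

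Next I would transport this equality along the complete order isomorphisms $\theta_k\colon (C(S^1)^{(k)})^d\to C(S^1)_{(k)}$ supplied by Corollary~\ref{main result dual}. As $\theta_n,\theta_m$ and their inverses are unital completely positive, functoriality of $\omin$ and of $\omax$ makes $\theta_n\otimes\theta_m$ simultaneously a complete order isomorphism for the $\omin$- and for the $\omax$-structures; conjugating the identity map of the preceding paragraph by $\theta_n\otimes\theta_m$ produces the identity map on $C(S^1)_{(n)}\otimes C(S^1)_{(m)}$, now witnessed to be a complete order isomorphism from $\omin$ onto $\omax$. Thus $C(S^1)_{(n)}\omin C(S^1)_{(m)}=C(S^1)_{(n)}\omax C(S^1)_{(m)}$, contradicting Theorem~\ref{min neq max fs} and proving the corollary.

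The one point deserving care is bookkeeping: verifying that each dualization and each transport carries the relevant \emph{identity} map to an \emph{identity} map, so that what is obtained downstairs is genuine equality of tensor-product structures in the sense of the definition of ``$\omin=\omax$'', not merely an abstract complete order isomorphism. This reduces to checking that the canonical vector-space identifications $(\osr\otimes\oss)^d\cong\osr^d\otimes\oss^d$ and $\osr\cong\osr^{dd}$ underlying Theorem~\ref{tp thms}(2) are the natural ones and are compatible with the isomorphisms of Theorem~\ref{main result proof} and Corollary~\ref{main result dual}; granting this, the intermediate isomorphisms cancel under conjugation and the identity map is preserved throughout.
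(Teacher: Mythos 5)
Your proposal is correct and follows essentially the same route as the paper: the paper's proof is precisely the one-line observation that dualizing the assumed equality (via Theorem~\ref{tp thms}(2) together with the identification $(C(S^1)^{(k)})^d\simeq C(S^1)_{(k)}$) yields $C(S^1)_{(n)}\omin C(S^1)_{(m)}=C(S^1)_{(n)}\omax C(S^1)_{(m)}$, contradicting Theorem~\ref{min neq max fs}. Your write-up merely makes explicit the bookkeeping (the companion identity $(\osr\omax\oss)^d\simeq\osr^d\omin\oss^d$ and the transport along $\theta_n\otimes\theta_m$) that the paper leaves implicit.
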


\begin{proof} The dual of the equality $C(S^1)^{(n)}\omin C(S^1)^{(m)} = C(S^1)^{(n)}\omax C(S^1)^{(m)}$ is
$C(S^1)_{(n)}\omin C(S^1)_{(m)} = C(S^1)_{(n)}\omax C(S^1)_{(m)}$, which is false by Theorem
\ref{min neq max fs}.
\end{proof}

\subsection{The commuting tensor product}

The 
\emph{commuting tensor product $\oc$} is the operator system structure on $\osr\otimes\oss$ obtained by
declaring a matrix $X\in\M_p(\osr\otimes\oss)$ to be positive if
$(\phi\cdot\psi)^{(p)}(X)$ is a positive operator for all pairs of completely positive maps $\phi :\osr\rightarrow\B(\H)$
and $\psi :\oss\rightarrow\B(\H)$ with commuting ranges, where  
$\phi\cdot\psi$ denotes the linear map
$\osr\otimes\oss\rightarrow\B(\H)$
defined by $\phi\cdot\psi(x\otimes y) = \phi(x)\psi(y)$, for $x\in \osr$, $y\in \oss$.

The relationship of $\oc$ to $\omin$ can be vexing to determine, even for operator systems of low dimension. (The problem of whether $\oss_2\omin\oss_2=\oss_2\oc\oss_2$,
for the operator system $\oss_2$ generated by the unitary generators of the free group C$^*$-algebra $\cstar(\mathbb F_2)$, is equivalent to the
Connes Embedding Problem \cite[Theorem 5.11]{kavruk2014}.) 
In contrast, the relationship of $\oc$ to $\omax$ can often be discerned, as is the case with the Toeplitz operator system and its dual.

\begin{proposition} $C(S^1)^{(n)}\oc C(S^1)_{(n)} \neq C(S^1)^{(n)}\omax C(S^1)_{(n)}$, for all $n\geq2$.
\end{proposition}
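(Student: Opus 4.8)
The plan is to determine the commuting tensor product exactly, namely to prove $C(S^1)^{(n)}\oc C(S^1)_{(n)}=C(S^1)^{(n)}\omin C(S^1)_{(n)}$, and then to separate it from the maximal one by establishing the strict inequality $C(S^1)^{(n)}\omin C(S^1)_{(n)}\neq C(S^1)^{(n)}\omax C(S^1)_{(n)}$; together with the always-valid $\omax\subseteq_+\oc\subseteq_+\omin$, this yields the proposition. (As usual the case $n=1$ is trivial, so $n\ge2$ throughout.)

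For the first equality the key input is that $C(S^1)^{(n)}$ is hyperrigid in the nuclear C$^*$-algebra $\M_n(\mathbb C)$ (Proposition \ref{hyperrigid}). Fix a matrix $X$ over $C(S^1)^{(n)}\otimes C(S^1)_{(n)}$ that is positive in the minimal tensor product, and let $\phi:C(S^1)^{(n)}\to\B(\H)$ and $\psi:C(S^1)_{(n)}\to\B(\H)$ be completely positive with commuting ranges. I would extend $\phi$ to the (by hyperrigidity, unique) completely positive map $\tilde\phi:\M_n(\mathbb C)\to\B(\H)$; for a unitary $u$ in the C$^*$-algebra generated by $\psi(C(S^1)_{(n)})$ one has $u\phi(\cdot)u^*=\phi(\cdot)$, so $u\tilde\phi(\cdot)u^*$ is again a completely positive extension of $\phi$, whence by uniqueness $u$ commutes with $\tilde\phi(\M_n(\mathbb C))$; letting $u$ vary, $\tilde\phi(\M_n(\mathbb C))$ still commutes with $\psi(C(S^1)_{(n)})$. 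Now $\M_n(\mathbb C)\oc C(S^1)_{(n)}=\M_n(\mathbb C)\omax C(S^1)_{(n)}=\M_n(\mathbb C)\omin C(S^1)_{(n)}$, the first equality because $\M_n(\mathbb C)$ is a C$^*$-algebra \cite{kavruk--paulsen--todorov--tomforde2011} and the second by Theorem \ref{tp thms}(1); since $C(S^1)^{(n)}\omin C(S^1)_{(n)}\coisubset\M_n(\mathbb C)\omin C(S^1)_{(n)}$ by injectivity of $\omin$, the element $X$ is positive there, so $(\tilde\phi\cdot\psi)(X)\ge0$, and this equals $(\phi\cdot\psi)(X)$ because $\tilde\phi$ restricts to $\phi$. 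As $(\phi,\psi)$ was arbitrary (and the same reasoning applies with $\B(\H)$ replaced by $\B(\H^{(p)})$), $X$ is positive in the commuting tensor product.

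With $\oc=\omin$ in hand, the proposition reduces to $C(S^1)^{(n)}\omin C(S^1)_{(n)}\neq C(S^1)^{(n)}\omax C(S^1)_{(n)}$, and here I would first reduce to $n=2$. The map $\chi_1\mapsto\chi_{n-1}$ is a unital complete order embedding $C(S^1)_{(2)}\coisubset C(S^1)_{(n)}$ (since $z\mapsto z^{n-1}$ is onto $S^1$, pointwise positivity of matrix-valued symbols is preserved in both directions), and it admits the unital completely positive conditional expectation $f\mapsto\hat f(-(n-1))\chi_{-1}+\hat f(0)\chi_0+\hat f(n-1)\chi_1$, whose complete positivity is an elementary consequence of the operator Riesz--Fej\'er factorization; dualizing this data through the Connes--van Suijlekom isomorphism (Theorem \ref{main result proof}) produces the corresponding embedding $C(S^1)^{(2)}\coisubset C(S^1)^{(n)}$ together with a unital completely positive expectation onto it. Since both $\omin$ and $\omax$ are injective along complemented unital complete order inclusions, an equality for $n$ would force one for $n=2$, so it suffices to show $C(S^1)^{(2)}\omin C(S^1)_{(2)}\neq C(S^1)^{(2)}\omax C(S^1)_{(2)}$.

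For this last point I would adapt the argument in the proof of Theorem \ref{min neq max fs}, with the roles of the two operator systems interchanged: exhibit an explicit $2\times2$ matrix over $C(S^1)^{(2)}\otimes C(S^1)_{(2)}$ (built from the blocks $b_j$ appearing there) that is positive in the minimal tensor product; assume a decomposition $\sum_{i,j}A_{ij}\otimes B_{ij}$ witnessing maximal positivity of an $\varepsilon$-perturbation, with $[A_{ij}]$ positive over $C(S^1)^{(2)}$ and $[B_{ij}]$ positive over $C(S^1)_{(2)}$; apply the matrix Riesz--Fej\'er factorization to $[B_{ij}]$ and the linear independence of $\{\chi_k\}$ to read off relations among the $A_{ij}$; evaluate at roots of unity as in Lemma \ref{poly roots}; and conclude that a specific Choi-type $4\times4$ matrix would have to be positive semidefinite, which it is not. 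The step I expect to be the main obstacle is precisely this: finding the right separating matrix for the mixed tensor product $C(S^1)^{(2)}\otimes C(S^1)_{(2)}$ and pushing the Riesz--Fej\'er and roots-of-unity bookkeeping through. By contrast, the hyperrigidity half ($\oc=\omin$) and the reduction to $n=2$ should both be short once the two conditional expectations are written down.
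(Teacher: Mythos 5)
Your two-step strategy (prove $\oc=\omin$ for this pair, then prove $\omin\neq\omax$) is genuinely different from the paper's argument, but as written each step has a real gap. The first is the use of hyperrigidity: Proposition \ref{hyperrigid} only guarantees a unique completely positive extension for maps of the form $\pi_{\vert C(S^1)^{(n)}}$ with $\pi$ a representation of $\M_n(\mathbb C)$, not for an arbitrary completely positive $\phi:C(S^1)^{(n)}\rightarrow\B(\H)$. For a general such $\phi$ the Arveson extension to $\M_n(\mathbb C)$ is far from unique: already for $n=2$, the state $\varphi$ on $C(S^1)^{(2)}$ with $\varphi(s)=\varphi(s^*)=0$ extends to a state on $\M_2(\mathbb C)$ by assigning $e_{11}$ any value in $[0,1]$. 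So the step ``$u\tilde\phi(\cdot)u^*$ is again a completely positive extension, hence equals $\tilde\phi$'' collapses, and with it the commutation of $\tilde\phi(\M_n(\mathbb C))$ with the range of $\psi$. This is not a repairable slip of notation: since $\oc$ is governed by $\cstaru(C(S^1)^{(n)})\omax\cstaru(C(S^1)_{(n)})$, and $\cstaru(C(S^1)^{(n)})$ is not even exact (Theorem \ref{lit}), one should not expect to reduce to the C$^*$-envelope $\M_n(\mathbb C)$ so cheaply; indeed the general equality $C(S^1)^{(n)}\oc\ost=C(S^1)^{(n)}\omin\ost$ is essentially the double commutant expectation property for $C(S^1)^{(n)}$, which the paper's concluding section flags as nontrivial for $n>2$. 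The second gap is that the separating example for $C(S^1)^{(2)}\omin C(S^1)_{(2)}\neq C(S^1)^{(2)}\omax C(S^1)_{(2)}$ --- the mixed pair, one factor a Toeplitz matrix system and the other its dual --- is exactly the point you defer: Theorem \ref{min neq max fs} treats $C(S^1)_{(n)}\otimes C(S^1)_{(m)}$ and does not transfer verbatim. So the proposal is not yet a proof.

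For comparison, the paper disposes of the statement in a few lines by a different route: by Theorem \ref{main result proof} the tensor product in question has the form $\oss^d\otimes\oss$ with $\oss=C(S^1)_{(n)}$, and \cite[Proposition 4.3]{kavruk2014} shows that $\oss^d\oc\oss=\oss^d\omax\oss$ forces $\oss$ to be completely order isomorphic to a C$^*$-algebra $\A$; since the C$^*$-envelope is an invariant and $\cstare(C(S^1)_{(n)})=C(S^1)$ is infinite dimensional, no such $\A$ exists. If you want to salvage the second half of your plan, a cleaner route to $\omin\neq\omax$ for the mixed pair is via the maximally entangled element $\xi_n$: it is positive in $C(S^1)^{(n)}\omin C(S^1)_{(n)}$, while max-positivity of $\xi_n+\varepsilon\,r_0\otimes\chi_0$ for all $\varepsilon>0$ would make the identity of $C(S^1)^{(n)}$ approximately factor through matrix algebras by completely positive maps, and hence (after an Arveson extension and a compactness argument) produce a unital completely positive map of $\M_n(\mathbb C)$ into $C(S^1)^{(n)}$ fixing $s$ and $s^*$, which the Boundary Theorem argument of Proposition \ref{pure omin} rules out.
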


\begin{proof} Via the complete order isomorphism $(C(S^1)_{(n)})^d\simeq C(S^1)^{(n)}$, 
the equality of $C(S^1)^{(n)}\oc C(S^1)_{(n)}$ and $C(S^1)^{(n)}\omax C(S^1)_{(n)}$ is possible only
if $C(S^1)_{(n)}$ is completely order isomorphic to a C$^*$-algebra $\A$ \cite[Proposition 4.3]{kavruk2014}.
As the C$^*$-envelope is invariant under complete order isomorphism, this would imply that
$\A=\cstare(\A)\cong\cstare(C(S^1)_{(n)})=C(S^1)$, yielding a linear isomorphism between the 
finite-dimensional vector space $C(S^1)_{(n)}$ and the infinite-dimensional 
vector space $C(S^1)$, which is
impossible.
\end{proof} 

The matrix ordering of $\oss\oc\osr$ is achieved through the canonical embedding of the algebraic tensor
product $\oss\otimes\osr$ into the C$^*$-algebra $\cstaru(\oss)\omax\cstaru(\osr)$ \cite{kavruk--paulsen--todorov--tomforde2011}; that is,
\[
\oss\oc\osr\subseteq_{\rm coi} \cstaru(\oss)\omax\cstaru(\osr).
\]
Even so, the universal C$^*$-algebras of $C(S^1)^{(n)}$ and $C(S^1)_{(n)}$ are not sufficiently tractable (e.g., see Theorem \ref{lit})
to draw additional information from.

\section{The Maximally Entangled Toeplitz Matrix}

\begin{definition}\label{def:se} If $\otimes_\sigma$ is an operator system tensor product structure on $\oss\otimes\osr$, for operator systems $\oss$
and $\osr$, then an element $x\in(\oss\otimes_\sigma\osr)_+$ is:
\begin{enumerate}
\item \emph{$\sigma$-separable}, if there exist $k\in\mathbb N$, $s_1,\dots,s_k\in\oss_+$, and $r_1,\dots,r_k\in\osr_+$ such that
$x=\displaystyle\sum_{j=1}^k s_j\otimes r_j$; or
\item \emph{$\sigma$-entangled}, if $x$ is not $\sigma$-separable.
\end{enumerate}
In the case where $\otimes_\sigma$ is the minimal operator system tensor product $\omin$, then we simply refer to $x\in(\oss\omin\osr)_+$ as being
\emph{separable} or \emph{entangled}.
\end{definition}

A beautiful result of Gurvits shows that positive block Toeplitz matrices with blocks coming from $\M_m(\mathbb C)$ are separable.

\begin{theorem}[Gurvits] Every positive element of $C(S^1)^{(n)}\omin\M_m(\mathbb C)$ is separable.
\end{theorem}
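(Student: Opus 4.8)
The plan is to reduce the statement to a concrete factorization problem for a positive block Toeplitz matrix and then exhibit an explicit separable decomposition. By Proposition~\ref{pos bt}, or rather directly from the structure of a positive element $T=\sum_{\ell=-n+1}^{n-1} r_\ell\otimes\tau_\ell\in C(S^1)^{(n)}\omin\M_m(\mathbb C)$, positivity of $T$ as a matrix in $\M_n(\M_m(\mathbb C))$ is equivalent (by the operator-valued Riesz--Fej\'er theorem, exactly as invoked in the proof of Theorem~\ref{main result proof}) to the existence of matrices $b_0,\dots,b_{n-1}\in\M_m(\mathbb C)$ with $\tau_\ell=\sum_{j\in I_\ell}b_j^*b_{\ell+j}$, i.e.\ $T=B^*B$ where $B$ is the block lower-triangular Toeplitz matrix with symbol $H(z)=\sum_{k=0}^{n-1}b_kz^k$. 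The goal is to turn this single ``outer product'' $T=B^*B$ into a sum of simple tensors $s_j\otimes \rho_j$ with $s_j\in \big(C(S^1)^{(n)}\big)_+$ and $\rho_j\in\big(\M_m(\mathbb C)\big)_+$.

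The key idea is an averaging/integration argument over the circle. First I would observe that for a fixed $\lambda\in S^1$ the rank-one Toeplitz matrix
\[
\Lambda_\lambda=\big[\,1\ \ \lambda\ \ \dots\ \ \lambda^{n-1}\,\big]^*\big[\,1\ \ \lambda\ \ \dots\ \ \lambda^{n-1}\,\big]
\]
is a positive element of $C(S^1)^{(n)}$ (it already appeared in the proof of Theorem~\ref{main result proof}). Next, write $H(\lambda)=\sum_k b_k\lambda^k\in\M_m(\mathbb C)$ and consider the positive matrix-valued integrand $\Lambda_\lambda\otimes\big(H(\lambda)^*H(\lambda)\big)$. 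Expanding the two tensor factors in powers of $\lambda$ and $\bar\lambda$ and integrating over $S^1$ with respect to normalized Lebesgue measure, the cross terms $\lambda^{k-j}$ with $k\neq j$ vanish, and a direct bookkeeping check shows
\[
\frac{1}{2\pi}\int_0^{2\pi}\Lambda_{e^{i\theta}}\otimes\big(H(e^{i\theta})^*H(e^{i\theta})\big)\,d\theta \;=\; \sum_{\ell=-n+1}^{n-1} r_\ell\otimes\Big(\sum_{j\in I_\ell}b_j^*b_{\ell+j}\Big)\;=\;T,
\]
where on the right one recognizes precisely the Riesz--Fej\'er coefficients. This realizes $T$ as a ``continuous convex combination'' of product positives; since the integrand is a norm-continuous function of $\theta\in S^1$ valued in the finite-dimensional space $C(S^1)^{(n)}\otimes\M_m(\mathbb C)$, the integral is a norm limit of Riemann sums, each of which is a finite sum $\sum_j c_j\,\Lambda_{\lambda_j}\otimes\big(H(\lambda_j)^*H(\lambda_j)\big)$ with $c_j>0$ — a genuine separable element. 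Finally, because the separable cone inside a finite-dimensional operator system tensor product is closed (it is the conical hull of the compact set of products of normalized positives, hence closed), the limit $T$ is itself separable.

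The main obstacle I anticipate is making the last ``closedness of the separable cone'' step airtight: a priori the set of finite sums $\sum s_j\otimes\rho_j$ need not be closed, so the Riemann-sum approximation alone does not finish the argument. The clean way around this is to use Carath\'eodory's theorem in the finite-dimensional real vector space $\big(C(S^1)^{(n)}\otimes\M_m(\mathbb C)\big)_{\mathrm{sa}}$: the separable elements are exactly the conical combinations of extreme rays $s\otimes\rho$ with $s,\rho$ normalized positives, these rays form a compact set, and hence their conical hull is closed and every point needs only boundedly many (at most $\dim+1$) summands. Combined with the integral representation above, this shows $T$ lies in that closed cone, i.e.\ $T$ is separable. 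An alternative, perhaps slicker, route that sidesteps limits entirely: use the exact quadrature of Lemma~\ref{poly roots} — since each tensor factor is a trigonometric polynomial of degree $<n$ (so the integrand has degree $<2n-1$ in $\lambda$), averaging over the $p$-th roots of unity for any prime $p\ge 2n-1$ reproduces $T$ \emph{exactly} as a finite sum $\frac1p\sum_{k=1}^p \Lambda_{\zeta^k}\otimes\big(H(\zeta^k)^*H(\zeta^k)\big)$, which is manifestly separable with no approximation needed. I would present this finite-quadrature version as the clean proof.
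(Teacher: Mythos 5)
There is a genuine gap, and it occurs in your very first step. You claim that positivity of the block Toeplitz matrix $T=\sum_\ell r_\ell\otimes\tau_\ell$ is equivalent, via the operator-valued Riesz--Fej\'er theorem, to a factorization $\tau_\ell=\sum_{j\in I_\ell}b_j^*b_{\ell+j}$, i.e.\ $T=B^*B$ with $B$ block lower-triangular Toeplitz. This is false: in the proof of Theorem \ref{main result proof} the Riesz--Fej\'er theorem is applied to a positive \emph{function} $F\in C(S^1)_{(n)}\otimes\M_p(\mathbb C)$, not to the positive Toeplitz \emph{matrix} $T$; these two cones are dual to one another, not equal (this is precisely the point of the example contrasting the positive matrix $t_f=\bigl[\begin{smallmatrix}1&1\\1&1\end{smallmatrix}\bigr]$ with the non-positive function $f(z)=1+z+z^{-1}$ in the discussion following Proposition \ref{to}). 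Concretely, already for $n=2$, $m=1$ the positive Toeplitz matrix with $\tau_0=\tau_1=1$ admits no such factorization, since $\tau_1=\bar b_0b_1$ forces $|\tau_1|\le\tfrac12\bigl(|b_0|^2+|b_1|^2\bigr)=\tfrac12\tau_0$. Proposition \ref{pos bt} does not supply the factorization either: it is a pairing criterion of $T$ against positive functions $F$, not a decomposition of $T$. Consequently your integral $\frac{1}{2\pi}\int\Lambda_{e^{i\theta}}\otimes H(e^{i\theta})^*H(e^{i\theta})\,d\theta$ produces only those block Toeplitz matrices whose blocks are the Fourier coefficients of a positive matrix-valued trigonometric polynomial of degree less than $n$ --- a proper subcone of $\bigl(C(S^1)^{(n)}\omin\M_m(\mathbb C)\bigr)_+$.

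The second half of your argument is sound, and the strategy can be repaired. What you need in place of Riesz--Fej\'er is the truncated trigonometric moment problem (Carath\'eodory--Toeplitz; in the operator-valued setting this is Ando's theorem \cite{ando1970}, which the paper itself invokes later for exactly this purpose): a block Toeplitz matrix $T$ is positive if and only if there is a positive $\M_m(\mathbb C)$-valued measure $E$ on $S^1$ with $\tau_k=\int_{S^1}z^{-k}\,dE(z)$, equivalently $T=\int_{S^1}\Lambda_z\otimes dE(z)$. From there your closing argument applies essentially verbatim: the normalized products $\Lambda_z\otimes\rho$ form a compact set not containing $0$, so by Carath\'eodory the separable cone is closed, and approximating $E$ by finitely atomic positive measures exhibits $T$ as a limit of, hence an element of, that cone. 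Note that the exact root-of-unity quadrature of Lemma \ref{poly roots} is no longer available for a general measure $E$, so the closure step cannot be bypassed. Note also that the paper does not reprove the theorem at all; it cites Gurvits and Barnum \cite{gurvits--barnum2002}, whose argument is likewise a moment-problem argument rather than a Fej\'er--Riesz factorization, so a corrected version of your proof would be a genuine self-contained addition.
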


\begin{proof} The proof of Gurvits' theorem given in \cite[\S III]{gurvits--barnum2002} yields the result for the operator system
$C(S^1)^{(n)}\omin\M_m(\mathbb C)$, although the result is predominantly cited in the literature as pertaining to the matrix algebra
$\M_n(\mathbb C)\omin\M_m(\mathbb C)$.
\end{proof}

\begin{corollary}\label{nuclear} If $\A$ is a unital nuclear C$^*$-algebra and $x\in (C(S^1)^{(n)}\omin\A)_+$, then for every $\varepsilon>0$
there exists a separable $y\in (C(S^1)^{(n)}\omin\A)_+$ such that $\|x-y\|<\varepsilon$.
\end{corollary}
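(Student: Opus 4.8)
The plan is to reduce the nuclear case to Gurvits' theorem by a finite-dimensional approximation argument. First I would invoke nuclearity of $\A$ in its standard approximation form: there exist nets of unital completely positive maps $\alpha_\lambda:\A\rightarrow\M_{m_\lambda}(\mathbb C)$ and $\beta_\lambda:\M_{m_\lambda}(\mathbb C)\rightarrow\A$ such that $\beta_\lambda\circ\alpha_\lambda\rightarrow\mathrm{id}_\A$ in the point-norm topology. Tensoring with the identity map on the finite-dimensional operator system $C(S^1)^{(n)}$, and using that $\omin$ is functorial for ucp maps, I obtain ucp maps
\[
\mathrm{id}\otimes\alpha_\lambda:\;C(S^1)^{(n)}\omin\A\;\longrightarrow\;C(S^1)^{(n)}\omin\M_{m_\lambda}(\mathbb C)
\]
and $\mathrm{id}\otimes\beta_\lambda$ going back. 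Since $C(S^1)^{(n)}$ is finite-dimensional, the point-norm convergence $\beta_\lambda\circ\alpha_\lambda\rightarrow\mathrm{id}_\A$ upgrades to $(\mathrm{id}\otimes\beta_\lambda)\circ(\mathrm{id}\otimes\alpha_\lambda)\rightarrow\mathrm{id}$ in norm on the algebraic tensor product (which here is all of $C(S^1)^{(n)}\otimes\A$ — wait, $\A$ need not be finite-dimensional, so I should instead argue that on the algebraic tensor product convergence is automatic, and then note $x$ lies in the closure; more carefully, pick a finite expansion $x=\sum_{\ell} r_\ell\otimes s_\ell$ using the basis of $C(S^1)^{(n)}$, so $x$ is literally in the algebraic tensor product, and apply point-norm convergence to each $s_\ell$).

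Given $\varepsilon>0$, choose $\lambda$ so that $\|(\mathrm{id}\otimes\beta_\lambda\alpha_\lambda)(x)-x\|<\varepsilon$. Now $(\mathrm{id}\otimes\alpha_\lambda)(x)$ is a positive element of $C(S^1)^{(n)}\omin\M_{m_\lambda}(\mathbb C)$, so by Gurvits' theorem it is separable: $(\mathrm{id}\otimes\alpha_\lambda)(x)=\sum_j t_j\otimes c_j$ with $t_j\in C(S^1)^{(n)}_+$ and $c_j\in\M_{m_\lambda}(\mathbb C)_+$. Applying the ucp map $\mathrm{id}\otimes\beta_\lambda$ — which preserves positivity of each factor, since $\beta_\lambda$ is positive — yields
\[
y:=(\mathrm{id}\otimes\beta_\lambda)\bigl((\mathrm{id}\otimes\alpha_\lambda)(x)\bigr)=\sum_j t_j\otimes\beta_\lambda(c_j),
\]
which is a sum of elementary tensors of positive elements, hence separable in $C(S^1)^{(n)}\omin\A$, and positive. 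Since $\|x-y\|<\varepsilon$, this is the desired approximation.

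The one point requiring care — and the likely main obstacle — is the passage from point-norm convergence of $\beta_\lambda\circ\alpha_\lambda$ on $\A$ to norm convergence of $(\mathrm{id}\otimes\beta_\lambda\alpha_\lambda)(x)$ toward $x$ in the completed operator system $C(S^1)^{(n)}\omin\A$. This is where finite-dimensionality of $C(S^1)^{(n)}$ is essential: writing $x=\sum_{\ell=-n+1}^{n-1} r_\ell\otimes s_\ell$ in the canonical basis, one has
\[
x-(\mathrm{id}\otimes\beta_\lambda\alpha_\lambda)(x)=\sum_{\ell=-n+1}^{n-1} r_\ell\otimes\bigl(s_\ell-\beta_\lambda\alpha_\lambda(s_\ell)\bigr),
\]
and each of the finitely many terms has norm at most $\|r_\ell\|\cdot\|s_\ell-\beta_\lambda\alpha_\lambda(s_\ell)\|\rightarrow 0$; summing the finitely many estimates and using the cross-norm bound for $\omin$ gives the claim. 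I should also double-check that the image of a separable element under a map of the form $\mathrm{id}\otimes\beta$ with $\beta$ positive is again separable — this is immediate from the elementary-tensor description, since $\beta$ sends positive elements to positive elements. No appeal to complete positivity of $\beta_\lambda$ is needed for separability, though it is needed (together with functoriality of $\omin$) to ensure $(\mathrm{id}\otimes\alpha_\lambda)(x)$ lands in the positive cone of the min tensor product in the first place.
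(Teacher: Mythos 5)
Your proposal is correct and follows essentially the same route as the paper: apply the completely positive approximation property of the nuclear algebra $\A$, use Gurvits' theorem on $(\mathrm{id}\otimes\alpha_\lambda)(x)$ in $C(S^1)^{(n)}\omin\M_{m_\lambda}(\mathbb C)$, and push the separable decomposition back via $\mathrm{id}\otimes\beta_\lambda$. Your extra care about upgrading point-norm convergence to norm convergence on $x$ via the finite basis expansion of $C(S^1)^{(n)}$ is a detail the paper leaves implicit, and it is handled correctly.
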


\begin{proof} As $\A$ is nuclear, there exists nets $\{\psi_\lambda\}_\lambda$ and $\{\phi_\lambda\}_\lambda$ of
completely positive linear maps $\psi_\lambda:\A\rightarrow\M_{n_\lambda}$ and 
$\phi_\lambda:\M_{n_\lambda}\rightarrow\A$ such that $\lim_\lambda\|\phi_\lambda\circ\psi_\lambda(a)-a\|=0$ for every $a\in\A$.
If $\iota_n$ denotes the identity map on $C(S^1)^{(n)}$, then by Gurvits' theorem there exist, for each $\lambda$,
positive Toeplitz matrices $t_1^\lambda,\dots,t_{k_\lambda}^\lambda\in C(S^1)^{(n)}$ and positive $g_1^\lambda,\dots,g_{k_\lambda}^\lambda\in \M_{n_\lambda}(\mathbb C)$
such that
\[
(\iota_n\otimes\psi_\lambda)[x]=\sum_{j=1}^{k_\lambda} t_j^\lambda\otimes g_j^\lambda.
\]
Thus, 
\[
(\iota_n\otimes\phi_\lambda)\circ(\iota_n\otimes\psi_\lambda)[x]=\displaystyle\sum_{j=1}^{k_\lambda} t_j^\lambda\otimes\phi_\lambda(g_j^\lambda).
\]
Because $\lim_\lambda\left\|x-(\iota_n\otimes\phi_\lambda)\circ(\iota_n\otimes\psi_\lambda)[x] \right\|=0$, 
for each $\varepsilon>0$ there exists a separable $y\in (C(S^1)^{(n)}\omin\A)_+$ with $\|x-y\|<\varepsilon$.
\end{proof}

The second consequence of Gurvits' separation theorem extends Theorem \ref{acp proof}, showing 
that every positive linear map $\psi$ of a unital nuclear C$^*$-algebra $\A$
is ``Toeplitz completely positive.''

\begin{corollary}\label{nuclear2} If $\A$ is a unital nuclear C$^*$-algebra and if $\psi:\A\rightarrow\B(\H)$ is a positive linear map,
then
\[
\left[ \begin{array}{cccc} 
\psi(a_0) & \psi(a_{-1}) &    \dots&  \psi(a_{-n+1}) \\
\psi(a_1) & \ddots & \ddots& \vdots\\
\vdots & \ddots & \ddots & \psi(a_{-1})  \\
 \psi(a_{n-1}) & \dots &  \psi(a_{1})&\psi(a_0)
\end{array}
\right]
\]
is a positive operator on $\displaystyle\bigoplus_1^n\H$, for every positive Toeplitz matrix
\[
\left[ \begin{array}{cccc} 
a_0 & a_{-1}   &  \dots&   a_{-n+1} \\
a_1 & \ddots & \ddots & \vdots \\
\vdots & \ddots & \ddots & a_{-1}  \\
a_{n-1} &   \dots &  a_{1}&a_0
\end{array}
\right]
\]
over $\A$.
\end{corollary}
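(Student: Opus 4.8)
The plan is to read the hypothesis as a positivity statement in the minimal operator system tensor product and then transport it through $\psi$ by means of Gurvits' separation theorem in the form of Corollary~\ref{nuclear}. Write $x=[a_{k-\ell}]_{k,\ell=0}^{n-1}$ for the positive Toeplitz matrix over $\A$ appearing in the statement. Because $\omin$ is the spatial tensor product, $C(S^1)^{(n)}\omin\A$ is (completely order isomorphic to) the operator subsystem of $\M_n(\mathbb C)\omin\A=\M_n(\A)$ consisting of Toeplitz matrices over $\A$, and in particular its positive cone is exactly the set of Toeplitz matrices lying in $\M_n(\A)_+$. Thus the hypothesis says precisely that $x\in(C(S^1)^{(n)}\omin\A)_+$, and the conclusion to be established is that the entry-wise image $[\psi(a_{k-\ell})]_{k,\ell=0}^{n-1}$ is positive in $\M_n(\B(\H))$, i.e. a positive operator on $\bigoplus_1^n\H$.

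Next I would record two elementary facts about the entry-wise map $\Psi\colon\M_n(\A)\to\M_n(\B(\H))$, $\Psi([b_{k\ell}])=[\psi(b_{k\ell})]$, which is just $\mathrm{id}_{\M_n(\mathbb C)}\otimes\psi$. First, $\Psi$ is bounded: a positive linear map on a unital C$^*$-algebra is automatically bounded, so $\psi$ is bounded, and hence so is $\Psi$, with $\|\Psi\|\le n^2\|\psi\|$. Second, $\Psi$ is positive on separable elements: if $y=\sum_j t_j\otimes c_j$ with $t_j\in(C(S^1)^{(n)})_+\subseteq(\M_n(\mathbb C))_+$ and $c_j\in\A_+$ (the shape of a separable positive element per Definition~\ref{def:se}), then $\Psi(y)=\sum_j t_j\otimes\psi(c_j)$, and each summand is the tensor product of a positive matrix with the positive operator $\psi(c_j)$, hence positive in $\M_n(\B(\H))$; so $\Psi(y)\ge 0$. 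Note that only positivity of $\psi$, not complete positivity, is used here, because one tensor factor is a scalar matrix.

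Finally I would close the argument by approximation. Given $\varepsilon>0$, Corollary~\ref{nuclear} produces a separable $y\in(C(S^1)^{(n)}\omin\A)_+$ with $\|x-y\|<\varepsilon$; then $\|\Psi(x)-\Psi(y)\|\le\|\Psi\|\,\varepsilon$ while $\Psi(y)\ge 0$, so $\Psi(x)$ lies within distance $\|\Psi\|\,\varepsilon$ of the positive cone of $\M_n(\B(\H))$. Since $\varepsilon$ is arbitrary and that cone is norm closed, $\Psi(x)=[\psi(a_{k-\ell})]_{k,\ell}$ is positive, which is the assertion. I do not expect a genuine obstacle: all the real content is already packaged in Gurvits' theorem and its nuclear extension, Corollary~\ref{nuclear}. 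The only two points that need a moment's care are the identification of positive Toeplitz matrices over $\A$ with the positive cone of $C(S^1)^{(n)}\omin\A$, and the automatic boundedness of $\psi$ — it is exactly this boundedness that allows the norm approximation to finish the proof even though $\psi$ is merely positive rather than completely positive.
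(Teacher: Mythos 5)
Your proposal is correct and follows essentially the same route as the paper: approximate $x$ by separable positive elements via Corollary~\ref{nuclear}, observe that $\mathrm{id}_{\M_n(\mathbb C)}\otimes\psi$ sends each separable positive element $\sum_j t_j\otimes c_j$ to the positive operator $\sum_j t_j\otimes\psi(c_j)$ using only positivity of $\psi$, and pass to the norm limit. The only (immaterial) difference is in the last step, where you invoke norm-closedness of the positive cone while the paper argues via upper semicontinuity of the spectrum.
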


\begin{proof} Assume that $x=[a_{k-\ell}]_{\ell,k=0}^{n-1}$ is a positive Toeplitz matrix over $\A$
and let  $\psi^{(n)}=\iota_n\otimes\psi$, where $\iota_n$ is the identity map on $C(S^1)^{(n)}$. 
By Corollary \ref{nuclear}, for each $k\in\mathbb N$  there is a positive separable Toeplitz
matrix $x_k\in C(S^1)^{(n)}\omin\A$ such that
$\|x-x_k\|<1/k$. In writing $x_k$ as 
\[
x_k=\sum_{j=1}^{m_k} t_j^{(k)}\otimes g_j^{(k)},
\]
for some $t_j^{(k)}\in(C(S^1)^{(n)})_+$ and $g_j^{(k)}\in\A_+$, we have
\[
 \psi^{(n)}(x_k ) = \sum_{j=1}^{m_k} t_j^{(k)}\otimes \psi(g_j^{(k)}),
\]
which is a positive element of $C(S^1)^{(n)}\omin\B(\H)$.
As $\psi^{(n)}$ is norm-continuous, 
$\|\psi^{(n)}(x)-\psi^{(n)}(x_k)\|\rightarrow0$ as $k\rightarrow\infty$. Because the spectrum of each $x_k$ is nonnegative, 
the upper semicontinuity of
the spectrum as a set-valued function
implies that the selfadjoint operator $\psi^{(n)}(x)$ also has nonnegative spectrum. Hence, $\psi^{(n)}(x)$
is a positive operator on $\displaystyle\bigoplus_1^n\H$.
\end{proof}
 
The maximally entangled state in quantum theory is  
the density operator
\[
\rho= \sum_{k=1}^d \vert\psi_k\rangle\langle\psi_k\vert
\]
acting on a $d$-dimensional Hilbert space $\H$, where $\langle \psi_1\vert, \dots, \langle\psi_d\vert$ is a given orthonormal basis
of $\H$.
As the bra-vectors $\vert\psi_1\rangle, \dots, \vert\psi_d\rangle$ form a dual basis of (the dual space) $\H^d$ relative to the basis of ket-vectors
$\langle \psi_1\vert, \dots, \langle\psi_d\vert$ of $\H$, it seems natural to say, for a finite-dimensional operator system $\oss$ with linear basis 
$\{s_1,\dots,s_m\}$ and dual basis $\{\delta_1,\dots,\delta_m\}$, that 
 the element 
\begin{equation}\label{e:kavruk}
\xi=\sum_{j=1}^m s_j\otimes \delta_j  
\end{equation}
is maximally entangled in $\oss\omin \oss^d$. This terminology is drawn from \cite[Appendix A]{kavruk2015}.

To apply this notion in the case where $\oss=C(S^1)^{(n)}$, we denote the dual basis of the linear basis $\{r_{-n+1},\dots,r_{n-1}\}$ of
$C(S^1)^{(n)}$ by
$\{\delta_{-n+1},\dots,\delta_{n-1}\}$. We have the identification
$C(S^1)^{(n)} \simeq \left( C(S^1)_{(n)}\right)^d$
via the unital complete order isomorphism $\phi:C(S^1)^{(n)} \rightarrow \left( C(S^1)_{(n)}\right)^d$ given in
Theorem \ref{main result proof}. The effect of $\phi$ on the canonical linear basis of $C(S^1)^{(n)}$ is
\[
\phi(r_k)=\mathfrak e_{-k}, 
\]
for every $k=-n+1,\dots,n-1$.
Note that the linear basis $\{\mathfrak e_{-n+1},\dots,\mathfrak e_{n-1}\}$ of $\left( C(S^1)_{(n)}\right)^d$ is dual to the linear
basis $\{\chi_{-n+1},\dots,\chi_{n-1}\}$ of $C(S^1)_{(n)}$. Therefore, if $\psi$ is the unital complete order isomorphism that implements
$\left(C(S^1)^{(n)}\right)^d\simeq  C(S^1)_{(n)}$, then
\[
\psi(\delta_k)=\chi_{-k},  
\]
for each $k=-n+1,\dots,n-1$. Thus, in equation (\ref{e:kavruk}) above, we replace each $\delta_k$ with $\chi_{-k}$ 
and arrive at the following definition.

\begin{definition} The element  $\xi_n\in C(S^1)^{(n)}\otimes C(S^1)_{(n)}$ defined by
\[
\xi_n= \sum_{k=-n+1}^{n-1} r_k \otimes \chi_{-k}
\]
is called the \emph{maximally entangled Toeplitz matrix}.
\end{definition}

Observe that $\xi_n$ can be view as the following function $S^1\rightarrow C(S^1)^{(n)}$:
\begin{equation}\label{e:metf}
\xi_n(z)= 
\left[ \begin{array}{cccccc} 
1 & z  & z^2 &  \dots&  z^{n-2}& z^{n-1} \\
z^{-1} & 1 & z & z^2& \dots & z^{n-2} \\
z^{-2} & z^{-1} & 1 & z &\ddots&   \vdots\\
\vdots & \ddots & \ddots & \ddots &\ddots &z^2 \\
z^{2-n}&   &  \ddots & \ddots &\ddots & z \\
z^{1-n} & z^{2-n} & \dots & z^{-2}  &z^{-1}&1
\end{array}
\right].
\end{equation}

In light of the Gurvits separability theorem, it is not obvious {\it a priori} that the maximally entangled Toeplitz matrix $\xi_n\in C(S^1)^{(n)}\omin C(S^1)_{(n)}$ is entangled
in the sense of Definition \ref{def:se}; therefore, a proof of entanglement is given in 
Corollary \ref{ent} below. Similarly, when considered as an element of the operator system $C(S^1)^{(n)}\omin C(S^1)$, the Toeplitz matrix
$\xi_n$ is a norm limit of a sequence of positive separable elements, by Corollary \ref{nuclear}; 
nevertheless, $\xi_n$ is entangled in this operator system, as shown below.

\begin{proposition}\label{ent big} The matrix $\xi_n$ is entangled in $C(S^1)^{(n)}\omin C(S^1)$, if $n\geq2$.
\end{proposition}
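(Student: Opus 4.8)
The plan is to show that the maximally entangled Toeplitz matrix $\xi_n$, viewed as an element of $C(S^1)^{(n)}\omin C(S^1)$, cannot be written as a finite sum $\sum_{j=1}^k t_j\otimes h_j$ with each $t_j\in (C(S^1)^{(n)})_+$ and each $h_j\in C(S^1)_+$. First I would record the explicit form of $\xi_n$ as the matrix-valued function in (\ref{e:metf}): $\xi_n(z)$ is the rank-one positive matrix $\xi_n(z) = v(z)v(z)^*$, where $v(z) = [\,1\ \ z^{-1}\ \ z^{-2}\ \ \dots\ \ z^{1-n}\,]^t$. The key structural observation is that for \emph{each fixed} $z\in S^1$, the matrix $\xi_n(z)$ has rank one, hence lies on an extreme ray of the cone $(\M_n(\mathbb C))_+ = (C(S^1)^{(n)})_+ \oplus \cdots$; more precisely it is an extreme point of the set of positive matrices with that trace.

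The heart of the argument: suppose $\xi_n = \sum_{j=1}^k t_j\otimes h_j$ with $t_j\geq 0$ in $C(S^1)^{(n)}$ and $h_j\geq 0$ in $C(S^1)$. Evaluating at $z$ gives $\xi_n(z) = \sum_j h_j(z)\, t_j$, a decomposition of the rank-one matrix $\xi_n(z)$ into a nonnegative combination of the fixed positive matrices $t_j$. Because $\xi_n(z)$ generates an extreme ray of $(\M_n(\mathbb C))_+$, each $t_j$ with $h_j(z)>0$ must be a nonnegative scalar multiple of $\xi_n(z) = v(z)v(z)^*$. Now I would argue that the $t_j$ are \emph{fixed} Toeplitz matrices independent of $z$, whereas $v(z)v(z)^*$ genuinely varies with $z$ (its range line rotates as $z$ moves around $S^1$): so for at most one value of $z$ in the support of a given $h_j$ can $t_j$ be proportional to $v(z)v(z)^*$, unless $t_j = 0$. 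Since $\sum_j h_j(z) t_j = \xi_n(z) \neq 0$ for every $z$, some $h_j(z)>0$, forcing a nonzero $t_j$ proportional to $v(z)v(z)^*$. Running $z$ over the (uncountable) circle while there are only finitely many $t_j$ yields a contradiction, since a single nonzero $t_j$ cannot be proportional to $v(z)v(z)^*$ for two distinct values of $z$ (the vectors $v(z)$, $v(z')$ are linearly independent when $z\neq z'$, as the first two coordinates already form a Vandermonde pair).

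I would organize this as: (1) exhibit $\xi_n(z) = v(z)v(z)^*$ and note it is rank one; (2) the extreme-ray lemma for positive matrices: if a rank-one positive matrix $P$ equals $\sum_j c_j A_j$ with $c_j\geq 0$ and $A_j\geq 0$, then each $A_j$ with $c_j>0$ is a nonnegative multiple of $P$; (3) assuming a finite separable decomposition exists, deduce that each nonzero $t_j$ is proportional to $v(z)v(z)^*$ for every $z$ in the open set $\{h_j>0\}$; (4) since $\{h_j>0\}$ is open and nonempty for at least one $j$ (as the $h_j$ cannot all vanish at a point where $\xi_n$ does not), this set contains two distinct points $z\neq z'$, and $v(z)v(z)^* $ not proportional to $v(z')v(z')^*$ gives the contradiction. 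The main obstacle I anticipate is step (3)/(4): one must be slightly careful that the $h_j$ are continuous functions on $S^1$ and that $\{h_j > 0\}$ is genuinely nonempty for the relevant index --- this follows because if all $h_j$ vanished at some $z_0$ then $\xi_n(z_0)=0$, contradicting that $\xi_n(z_0)$ has $1$'s on the diagonal --- and then argue that a nonempty open subset of $S^1$ is infinite while there are only finitely many $t_j$, so by pigeonhole some single $t_j$ must be proportional to $v(z)v(z)^*$ at two distinct circle points, which is impossible. No new machinery beyond elementary linear algebra and the explicit form (\ref{e:metf}) is needed.
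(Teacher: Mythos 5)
Your proposal is correct, and its first half coincides with the paper's proof: both arguments evaluate a putative separable decomposition $\xi_n=\sum_j t_j\otimes h_j$ at a point $z\in S^1$, observe that $\xi_n(z)$ is rank one, and use the domination $0\leq h_j(z)t_j\leq \xi_n(z)$ to conclude that each summand is a scalar multiple $\alpha_j(z)\xi_n(z)$. Where you diverge is the endgame. The paper picks a nondiagonal $t_j$, compares the $(0,0)$ and $(\ell,0)$ entries of the identity $h_j(z)t_j=\alpha_j(z)\xi_n(z)$ to get $\tau_0^{(j)}h_j(z)=\tau_\ell^{(j)}h_j(z)z^{\ell}$ for all $z$, and then forces $h_j=0$ by comparing Fourier coefficients of $h_j$ and $h_j\chi_\ell$. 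You instead exploit the rigidity of the rank-one family $z\mapsto v(z)v(z)^*$: a fixed nonzero matrix $t_j$ cannot be proportional to $v(z_1)v(z_1)^*$ and $v(z_2)v(z_2)^*$ for $z_1\neq z_2$, since $v(z_1)$ and $v(z_2)$ are linearly independent, while continuity of $h_j$ guarantees that $\{h_j>0\}$ is a nonempty open set containing two such points. Your finish is more elementary (no Fourier analysis) and slightly more general, as it never uses that the $t_j$ are Toeplitz --- it would rule out separability even with arbitrary positive matrices as left tensor factors; the paper's Fourier argument, by contrast, is the one that generalizes naturally to the symbol-space setting used elsewhere in the article. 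One small streamlining: you do not actually need the pigeonhole over all of $S^1$; after discarding zero terms, any single index $j$ already has $h_j\not\equiv 0$, hence two points of positivity, which suffices.
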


\begin{proof}
Assume, on the contrary, that there are nonzero positive Toeplitz matrices $t_1,\dots, t_k\in C(S^1)^{(n)}$ and nonzero positive functions
$f_1,\dots,f_k\in C(S^1)$ such that
\[
\xi_n=\sum_{j=1}^m t_j\otimes f_j.
\]
Considering these elements as matrix-valued functions, we have, for every $z\in S^1$,
\begin{equation}\label{sep eq1}
\xi_n(z)=\sum_{j=1}^m f_j(z)t_j .
\end{equation}
Fix $z\in S^1$. The positive matrix $\xi_n(z)$ has rank-1; hence, equation (\ref{sep eq1}) shows that every vector in
$\mathbb C^n$ annihilated by $\xi_n(z)$ is also annihilated by $f_j(z)t_j$ for every $j$. In passing to orthogonal complements
and making use of the fact that these matrices are selfadjoint, we deduce that the rank of each $f_j(z)t_j$ is $0$ or $1$ and that
the range of each $f_j(z)t_j$ is contained in $\xi_n(z)$. Hence, there are scalars $\alpha_j(z)\in [0,1]$, for $j=1,\dots,m$, such that
\begin{equation}\label{sep eq2}
f_j(z)t_j =\alpha_j(z)\xi_n(z).
\end{equation}
Now allowing $z$ to be arbitrary, we see that equation (\ref{sep eq2}) holds for every $z\in S^1$.

As there are no $z\in S^1$ for which $\xi_n(z)$ is diagonal, at least one $t_j$ must be nondiagonal. With such a $j$, write
$t_j=\left[ \tau^{(j)}_{k-\ell}\right]_{k,\ell=0}^{n-1}$. As $t_j$ is positive and nonzero, its diagonal entry satisfies
$\tau_0^{(j)}>0$; and because $t_j$ is not diagonal, there is
a $\ell>0$ such that $\tau_\ell^{(j)}\not=0$. Thus,
equation (\ref{sep eq2}) yields
\[
f_j(z)\tau_0^{(j)}=\alpha_j(z) \mbox{ and } f_j(z)\tau_\ell^{(j)} = \alpha_j(z)z^{-\ell},
\]
for every $z\in S^1$. This equation above shows that $\alpha_j(z)$ is continuous in $z$ and 
\begin{equation}\label{sep eq3}
\alpha_j(z)=\tau_0^{(j)}f_j(z) = \tau_\ell^{(j)} \left(f_j(z)z^\ell\right),
\end{equation}
for every $z\in S^1$. Hence, equation (\ref{sep eq3}) implies that
the Fourier coefficients of $f_j $ and $f_j\chi_\ell$ agree at every $k\in\mathbb Z$, which can happen only if $f_j$ is identically zero. Because $f_j=0$ 
contradicts the hypothesis that the functions $f_1,\dots, f_m\in C(S^1)$ be nonzero, it must be that $\xi_n$ is not separable.
\end{proof}

\begin{corollary}\label{ent} The matrix $\xi_n$ is entangled in $C(S^1)^{(n)}\omin C(S^1)_{(n)}$, if $n\geq2$.
\end{corollary}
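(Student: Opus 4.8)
The plan is to derive Corollary \ref{ent} from Proposition \ref{ent big} by transporting a hypothetical separable decomposition from the smaller tensor product $C(S^1)^{(n)}\omin C(S^1)_{(n)}$ up into $C(S^1)^{(n)}\omin C(S^1)$. The one point to isolate first is that $C(S^1)_{(n)}$ is, by the very definition of its operator system structure, an operator subsystem of the abelian C$^*$-algebra $C(S^1)$; hence the inclusion $\iota:C(S^1)_{(n)}\hookrightarrow C(S^1)$ is unital and completely positive, and a trigonometric polynomial $g$ of degree less than $n$ is a positive element of $C(S^1)_{(n)}$ exactly when $g(z)\geq0$ for all $z\in S^1$, that is, exactly when $\iota(g)$ is positive in $C(S^1)$.

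Next I would argue by contradiction. Suppose $\xi_n$ is separable in $C(S^1)^{(n)}\omin C(S^1)_{(n)}$ in the sense of Definition \ref{def:se}; then there are positive Toeplitz matrices $t_1,\dots,t_k\in C(S^1)^{(n)}$ and positive elements $g_1,\dots,g_k\in C(S^1)_{(n)}$ with $\xi_n=\sum_{j=1}^k t_j\otimes g_j$. This is an identity in the algebraic tensor product $C(S^1)^{(n)}\otimes C(S^1)_{(n)}$, which sits inside $C(S^1)^{(n)}\otimes C(S^1)$; under this inclusion the left-hand side is precisely the element $\xi_n$ appearing in Proposition \ref{ent big}, and by the observation of the previous paragraph each $g_j$ is again a positive element, now of $C(S^1)$. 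Thus $\xi_n=\sum_{j=1}^k t_j\otimes g_j$ exhibits $\xi_n$ as a separable element of $C(S^1)^{(n)}\omin C(S^1)$, contradicting Proposition \ref{ent big}. Hence $\xi_n$ is entangled in $C(S^1)^{(n)}\omin C(S^1)_{(n)}$.

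There is essentially no obstacle here; the whole content is the identification of the positivity cone of $C(S^1)_{(n)}$ with the restriction of the pointwise-positivity cone of $C(S^1)$, after which the result is immediate. Equivalently, one may phrase it via functoriality of $\omin$: the unital complete order embedding $\iota$ induces a unital completely positive map $\mbox{\rm id}_{C(S^1)^{(n)}}\otimes\iota:C(S^1)^{(n)}\omin C(S^1)_{(n)}\to C(S^1)^{(n)}\omin C(S^1)$ which fixes $\xi_n$ and sends separable elements to separable elements, so that entanglement of $\xi_n$ in the larger operator system forces entanglement in the smaller one.
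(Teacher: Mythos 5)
Your argument is correct and is essentially the paper's own proof: the paper invokes the complete order inclusion $C(S^1)^{(n)}\omin C(S^1)_{(n)}\subseteq_{\rm coi} C(S^1)^{(n)}\omin C(S^1)$ to transport a hypothetical separable decomposition of $\xi_n$ into the larger tensor product, contradicting Proposition \ref{ent big}. Your explicit observation that positivity in $C(S^1)_{(n)}$ coincides with pointwise positivity in $C(S^1)$ is exactly the content of that inclusion, so the two proofs coincide.
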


\begin{proof} Since $C(S^1)^{(n)}\omin C(S^1)_{(n)}\subseteq_{\rm coi} C(S^1)^{(n)}\omin C(S^1)$, 
if $\xi_n$ were separable in $C(S^1)^{(n)}\omin C(S^1)_{(n)}$, then it would be separable in 
$C(S^1)^{(n)}\omin C(S^1)$ as well, in contradiction to Proposition \ref{ent big}.
\end{proof}

Besides the notion of entanglement, another property of relevance to elements of convex cones is that of purity.
 
\begin{definition} An element $x$ of a convex cone $\mathcal C$ is \emph{pure} if
the equation $x=y+z$, for $y,z\in\mathcal C$, implies that $z=\lambda x$ and $y=(1-\lambda)x$
for some real number $\lambda\in[0,1]$.
\end{definition}

Suppose that $f$ and $g$ are positive continuous functions $S^1\rightarrow C(S^1)^{(n)}$
such that $f+g=\xi_n$. Select $\lambda\in S^1$. The positive Toeplitz matrix $\xi_n(\lambda)$
has rank 1, and so it is a pure element of the cone $\M_n(\mathbb C)_+$, which implies that it is 
a pure element of the subcone $(C(S^1)^{(n)})_+$. Hence, from $f(\lambda)+g(\lambda)=\xi_n(\lambda)$,
there is a scalar $\alpha(\lambda)\in[0,1]$ such that $f(\lambda)=\alpha(\lambda)\xi_n(\lambda)$.
Thus, the mapping $z\mapsto\alpha(z)$ is
a function $\alpha:S^1\rightarrow[0,1]$ in which $\alpha(z)=\frac{\|f(z)\|}{\|\xi_n(z)\|}$ for every $z\in S^1$.
Hence, $\alpha$ is continuous, and we obtain $f(z)=\alpha(z)\xi_n(z)$ for all $z\in S^1$. 
The following proposition indicates that $\alpha$ is a constant function if the Fourier coefficients of $f$ 
vanish for all $|k|\geq n$.

\begin{proposition}\label{pure omin}
The maximally entangled Toeplitz matrix $\xi_n$ is pure in the convex
cone $\left(C(S^1)^{(n)}\omin C(S^1)_{(n)}\right)_+$.
\end{proposition}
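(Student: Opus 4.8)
The plan is to build directly on the analysis in the paragraph preceding the statement. There it is shown that if $\xi_n=f+g$ with $f$ and $g$ in $\left(C(S^1)^{(n)}\omin C(S^1)_{(n)}\right)_+$, then, using that each $\xi_n(z)$ has rank one and is therefore pure in $(C(S^1)^{(n)})_+$, there is a continuous function $\alpha:S^1\to[0,1]$ with $f(z)=\alpha(z)\,\xi_n(z)$ for every $z\in S^1$. Hence it suffices to prove that $\alpha$ is constant: then $f=\alpha\,\xi_n$ and $g=(1-\alpha)\,\xi_n$ are positive scalar multiples of $\xi_n$, which is precisely purity.

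First I would record that, since $C(S^1)^{(n)}$ and $C(S^1)_{(n)}$ are finite-dimensional, $f$ lies in the algebraic tensor product $C(S^1)^{(n)}\otimes C(S^1)_{(n)}$, so with respect to the canonical basis $\{r_{-n+1},\dots,r_{n-1}\}$ it can be written as $f=\sum_{\ell=-n+1}^{n-1}r_\ell\otimes h_\ell$ with $h_\ell\in C(S^1)_{(n)}$; in the function picture this reads $f(z)=\sum_\ell h_\ell(z)\,r_\ell$. Since $r_\ell$ has $(a,b)$-entry $\delta_{a-b,\ell}$, the $(a,b)$-entry of $f(z)$ equals $h_{a-b}(z)$; comparing with the $(a,b)$-entry $z^{b-a}$ of $\xi_n(z)$ read off from (\ref{e:metf}) and with $f(z)=\alpha(z)\xi_n(z)$, I get
\[
h_m(z)=\alpha(z)\,z^{-m}\qquad\text{for all }m\in\{-n+1,\dots,n-1\},\ z\in S^1.
\]

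The rigidity step is then short. Taking $m=0$ gives $\alpha=h_0\in C(S^1)_{(n)}$, so $\alpha(z)=\sum_{k=-n+1}^{n-1}a_kz^k$ for some scalars $a_k$. Feeding this into the identity with $m=n-1$, the function $h_{n-1}(z)=\sum_k a_k z^{k-(n-1)}$ must again lie in $C(S^1)_{(n)}$, and since the exponents $k-(n-1)$ run through $\{-(2n-2),\dots,0\}$ this forces $a_k=0$ for every $k<0$. The symmetric computation with $m=-(n-1)$ --- or, equivalently, the observation that $\alpha$ is real-valued --- then gives $a_k=0$ for every $k>0$ as well, so $\alpha\equiv a_0\in[0,1]$. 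Therefore $f=a_0\,\xi_n$, $g=(1-a_0)\,\xi_n$, and $\xi_n$ is pure.

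The one point I would treat with care, and the conceptual crux, is the passage from the pointwise rank-one structure to a global rigidity statement about $\alpha$: rank-one-ness of each $\xi_n(z)$ by itself only produces a continuous scalar field, which would even be consistent with $\xi_n$ being non-pure in the larger system $C(S^1)^{(n)}\omin C(S^1)$. What actually forces the conclusion is the finite-bandwidth constraint $h_\ell\in C(S^1)_{(n)}$ carried by the second tensor factor, since multiplying a trigonometric polynomial of degree $<n$ by $z^{\pm(n-1)}$ lands back in $C(S^1)_{(n)}$ only when that polynomial is constant. Everything else is the elementary Fourier-exponent bookkeeping sketched above.
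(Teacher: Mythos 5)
Your argument is correct, and it takes a genuinely different route from the paper's. The paper does set up exactly the reduction you start from --- in the paragraph before the proposition it produces the continuous $\alpha:S^1\to[0,1]$ with $f(z)=\alpha(z)\xi_n(z)$ from the pointwise rank-one (hence pure) structure of $\xi_n(z)$ --- but its actual proof of purity then abandons the function picture entirely: it uses the isomorphism $\osr\otimes\osr^d\cong\mathcal L(\osr)$ to identify $\left(C(S^1)^{(n)}\omin C(S^1)_{(n)}\right)_+$ with the cone of completely positive maps on $C(S^1)^{(n)}$ and $\xi_n$ with the identity map, extends a decomposition $\iota=\vartheta+\omega$ to completely positive maps on $\M_n(\mathbb C)$ by Arveson's extension theorem, invokes the Boundary Theorem (via irreducibility of $C(S^1)^{(n)}$) to conclude the extended sum is $\mathrm{id}_{\M_n(\mathbb C)}$, and finishes with purity of irreducible representations in the CP cone. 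Your proof instead completes the function-theoretic reduction by elementary Fourier bookkeeping: writing $f=\sum_\ell r_\ell\otimes h_\ell$ with $h_\ell\in C(S^1)_{(n)}$, matching entries to get $h_m(z)=\alpha(z)z^{-m}$, and observing that $\alpha=h_0\in C(S^1)_{(n)}$ together with $\alpha\,\chi_{-(n-1)}\in C(S^1)_{(n)}$ (and realness of $\alpha$, or the symmetric computation) forces $\alpha$ to be constant. Both are complete; yours is self-contained and pinpoints exactly where the bandwidth restriction on the second tensor factor is used --- correctly flagging that purity fails in $C(S^1)^{(n)}\omin C(S^1)$, where any continuous $\alpha:S^1\to[0,1]$ yields a positive summand $\alpha\,\xi_n$ --- while the paper's argument is heavier machinery but exhibits the purity of $\xi_n$ as an instance of a structural fact (purity of the identity map in the CP cone of an irreducible operator system generating $\M_n(\mathbb C)$) that does not depend on the explicit Toeplitz coordinates.
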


\begin{proof}
If $\mathcal W$ is a finite-dimensional vector space, then the 
tensor product $\mathcal W\otimes \mathcal W^d$ 
is linearly isomorphic to $\mathcal L(\mathcal W)$, 
the vector space of linear transformations on $\mathcal W$. 
If we apply this linear isomorphism to a finite-dimensional 
operator system $C(S^1)^{(n)}$ and its operator system dual $C(S^1)_{(n)}$,
then the cone $\mathcal C\mathcal P(C(S^1)^{(n)})$ in $\mathcal L(C(S^1)_{(n)})$ 
of completely positive linear maps on $C(S^1)^{(n)}$ determines a cone in 
$C(S^1)^{(n)}\otimes C(S^1)_{(n)}$: namely, 
$\left(C(S^1)^{(n)}\omin C(S^1)_{(n)}\right)_+$ 
\cite[\S4]{kavruk--paulsen--todorov--tomforde2011}. 

The canonical linear isomorphism 
between $\osr\otimes\osr^d$ and $\mathcal L(\osr)$ is the one 
that maps elementary tensors $x\otimes\psi\in\osr\otimes\osr^d$ 
to rank-1 linear transformations $r\mapsto\psi(r)x$, for $r\in\osr$. Let
$\Gamma$ be the inverse of this linear isomorphism and take $\osr=C(S^1)^{(n)}$, thereby obtaining a
linear isomorphism in which
\[
\Gamma\left(\mathcal C\mathcal P(C(S^1)^{(n)})\right)=\left(C(S^1)^{(n)}\omin C(S^1)_{(n)}\right)_+.
\]
Observe that $\phi\in\mathcal C\mathcal P(\osr)$ is pure in the cone 
$\mathcal C\mathcal P(C(S^1)^{(n)})$ if and only if
$\Gamma(\phi)$ is pure in the cone 
$\left(C(S^1)^{(n)}\omin C(S^1)_{(n)}\right)_+$.

Let $\iota:C(S^1)^{(n)}\rightarrow C(S^1)^{(n)}$ be the identity map of 
$C(S^1)^{(n)}$ and note that $\Gamma(\iota)=\xi_n$. Thus, we aim to show that
$\iota$ is a pure completely positive linear map.
To this end, suppose that $\iota=\vartheta+\omega$, for two completely
positive linear maps $\vartheta,\omega:C(S^1)^{(n)}\rightarrow C(S^1)^{(n)}$. Viewing these maps
as completely positive linear maps from $C(S^1)^{(n)}$ into $\M_n(\mathbb C)$,
they admit completely positive linear extensions $\Theta,\Omega:\M_n(\mathbb C)\rightarrow\M_n(\mathbb C)$.
Hence, $\Delta=\Theta+\Omega$ is a ucp map on $\M_n(\mathbb C)$ such that, for each $x\in C(S^1)^{(n)}$,
$\Delta(x)=\theta(x)+\omega (x)=x$.
Therefore, by the Boundary Theorem \cite{arveson1972,arveson2008,farenick2011b}, $\Delta$ is the identity map of $\M_n(\mathbb C)$.
Because the identity map is an irreducible representation  of $\M_n(\mathbb C)$,
it is pure in the cone of completely positive linear maps on $\M_n(\mathbb C)$. Hence,  for some $\lambda\in[0,1]$,
$\Theta=\lambda\Delta$ and $\Omega=(1-\lambda)\Delta$, implying that
$\vartheta=\lambda\iota$ and $\omega=(1-\lambda)\iota$.
\end{proof}

 One final point of interest regarding the maximally entangled Toeplitz matrix: it is universal for all (spatially) positive Toeplitz matrices over C$^*$-algebras.

\begin{theorem}[Ando] If $\A$ is a unital C$^*$-algebra and $x\in (C(S^1)^{(n)}\omin\A)_+$, then there exists a completely positive linear map (possibly non-unital)
\[
\phi:C(S^1)^{(n)}\omin C(S^1)_{(n)}\rightarrow C(S^1)^{(n)}\omin\A
\]
such that $\phi(\xi_n)=x$.
\end{theorem}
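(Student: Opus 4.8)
The plan is to recognize that, after the identifications established above, Ando's theorem is essentially a restatement of the dictionary between completely positive maps out of a finite-dimensional operator system and positive elements of a minimal tensor product. First I would write $x=\sum_{k=-n+1}^{n-1}r_k\otimes a_k$ with $a_k\in\A$; the hypothesis $x\in(C(S^1)^{(n)}\omin\A)_+$ is precisely the statement that the block Toeplitz matrix $[a_{k-\ell}]_{k,\ell=0}^{n-1}$ is positive in $\M_n(\A)$. The candidate map is $\phi=\mathrm{id}_{C(S^1)^{(n)}}\otimes\psi$, where $\psi:C(S^1)_{(n)}\to\A$ is the linear map defined on the canonical basis by $\psi(\chi_{-k})=a_k$ for $-n+1\le k\le n-1$. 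With this choice $\phi(\xi_n)=\sum_k r_k\otimes\psi(\chi_{-k})=\sum_k r_k\otimes a_k=x$ immediately, and $\phi$ is completely positive as soon as $\psi$ is, by functoriality of $\omin$. So the entire content of the theorem is the complete positivity of $\psi$.

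To obtain that, I would invoke the standard correspondence of \cite{kavruk--paulsen--todorov--tomforde2011}, already used in the proof of Proposition \ref{pure omin}: for a finite-dimensional operator system $\oss$ with basis $\{s_j\}$ and dual basis $\{\hat s_j\}$, and an arbitrary operator system $\W$, a linear map $\Psi:\oss\to\W$ is completely positive if and only if $\sum_j\Psi(s_j)\otimes\hat s_j$ is positive in $\W\omin\oss^d$. Applying this with $\oss=C(S^1)_{(n)}$, $\W=\A$, $s_j=\chi_j$, and $\hat s_j=\mathfrak e_j$, the map $\psi$ corresponds to $\sum_k\psi(\chi_k)\otimes\mathfrak e_k\in\A\otimes(C(S^1)_{(n)})^d$. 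Transporting this element through the complete order isomorphism $(C(S^1)_{(n)})^d\simeq C(S^1)^{(n)}$ of Theorem \ref{main result proof} (which sends $\mathfrak e_k\mapsto r_{-k}$, since $\phi(r_k)=\mathfrak e_{-k}$ in the notation there) and the canonical flip $\A\omin C(S^1)^{(n)}\simeq C(S^1)^{(n)}\omin\A$, it becomes
\[
\sum_k r_{-k}\otimes\psi(\chi_k)=\sum_j r_j\otimes\psi(\chi_{-j})=\sum_j r_j\otimes a_j=x,
\]
which is positive by hypothesis. Hence $\psi$ is completely positive, and therefore so is $\phi$. Note that $\phi$ need not be unital, since $\psi(\chi_0)=a_0$ need not equal $1_\A$.

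I do not expect a genuine obstacle: the theorem falls out of the operator-system identification $C(S^1)^{(n)}\simeq(C(S^1)_{(n)})^d$ together with an elementary map-to-tensor dictionary. The only step requiring care is the bookkeeping in that dictionary --- keeping track of which tensor slot carries the dual space, and of the fact that it is the \emph{minimal} (not the maximal) tensor product that corresponds to complete positivity of maps out of $C(S^1)_{(n)}$. One can double-check the dictionary and the index conventions in the cases $n=1,2$, or directly against the proof of Proposition \ref{pure omin}, where the same correspondence appears in the special case $\W=C(S^1)^{(n)}$ (recovering $(C(S^1)^{(n)}\omin C(S^1)_{(n)})_+\cong\mathcal{CP}(C(S^1)^{(n)})$).
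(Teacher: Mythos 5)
Your proposal is correct, but it takes a genuinely different route from the paper. The paper's proof really does use Ando's operator-valued truncated trigonometric moment theorem \cite{ando1970} (hence the attribution): it first normalizes the diagonal block by conjugating with $y=r_0\otimes(a_0+\varepsilon 1)^{-1/2}$, invokes Ando's theorem to produce a ucp map $\psi:C(S^1)\rightarrow\A$ with $\psi(z^k)=b_{-k}$, restricts $\psi$ to $C(S^1)_{(n)}$, and conjugates back by $y^{-1}$ to recover $x$ from $\xi_n$. You bypass the moment theorem entirely by reading the positive block Toeplitz matrix $x$ as the Choi element of the linear map $\psi:C(S^1)_{(n)}\rightarrow\A$, $\psi(\chi_{-k})=a_k$, via the correspondence $\mathcal{CP}(\oss,\W)\cong\left(\W\omin\oss^d\right)_+$ from \cite{kavruk--paulsen--todorov--tomforde2011} together with Theorem \ref{main result proof}; your index bookkeeping ($\mathfrak e_k\mapsto r_{-k}$, so the Choi element transports exactly to $x$) is correct, and it is indeed the \emph{minimal} tensor product that governs complete positivity here, so the hypothesis is precisely what is needed. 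Your argument is more self-contained relative to the paper --- it reuses exactly the machinery already deployed in Proposition \ref{pure omin} and avoids the $\varepsilon$-regularization required when $a_0$ is not invertible --- and it has the pleasant side effect of exhibiting the truncated Ando moment theorem as a \emph{consequence} of the duality $C(S^1)^{(n)}\simeq\left(C(S^1)_{(n)}\right)^d$ (via Arveson extension of the resulting cp map $\psi$) rather than as an input; the paper's route, by contrast, makes the historical attribution transparent. The only point you pass over silently is that $\iota_n\otimes\psi$ remains completely positive on the minimal tensor product even though $\psi$ is not unital; this is standard (extend $\psi$ to a cp map into $\B(\H)$ by Arveson, apply Stinespring, and use the spatial description of $\omin$), so it is a presentational footnote rather than a gap.
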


\begin{proof} Write $x\in (C(S^1)^{(n)}\omin\A)_+$ as $x=\displaystyle\sum_{k=-n+1}^{n-1}r_k\otimes a_k$, for some $a_k\in\A$. Fix $\varepsilon>0$ and set
$y=r_0\otimes(a_0+\varepsilon1)^{-1/2}$. Let
\[
\tilde x = yxy = \sum_{k=-n+1}^{n-1}r_k\otimes b_k,
\]
where $b_0=1\in\A$ and $b_k=(a_0+\varepsilon1)^{-1/2}a_k(a_0+\varepsilon1)^{-1/2}$. By \cite[Theorem 4]{ando1970} (see also \cite[Theorem 6.5]{argerami2019}),
there is a ucp map $\psi:C(S^1)\rightarrow \A$ such that $\psi(z^k)=b_k$, for each $k$. Because the transpose map on Toeplitz matrices is 
induced by a unitary similarity transformation, we may assume that $\psi$ in fact satisfies $\psi(z^k)=b_{-k}$, for each $k$. Let $\psi_{n}$ denote the restriction of $\psi$ to the
operator subsystem $C(S^1)_{(n)}$ of $C(S^1)$ and define $\phi:C(S^1)^{(n)}\omin C(S^1)_{(n)}\rightarrow C(S^1)^{(n)}\omin\A$ by 
\[
\phi=(y^{-1})^*(\iota_n\otimes\psi_{n})y^{-1},
\]
where $\iota_n$ is the identity map on $C(S^1)^{(n)}$.
Thus, $\phi$ is completely positive and $\phi(\xi_n)=x$.
\end{proof}

\section{Conclusion}

The identification in the operator system category 
of the operator system $C(S^1)^{(n)}$ of $n\times n$ Toeplitz matrices with the operator system dual of 
the space $C(S^1)_{(n)}$ of trigonometric polynomials of degree less than $n$
has a number of striking consequences, including the implications that every positive linear map
of the Toeplitz matrices is completely positive and every unital $\M_{n}(\mathbb C)$-valued linear isometric
map of $C(S^1)^{(n)}$ is completely isometric. This identification also allows for a clearer understanding of positivity
for block Toeplitz matrices, distinguishing block Toeplitz matrices with blocks that are Toeplitz matrices from 
block Toeplitz matrices with blocks that are arbitrary complex matrices.

An operator system $\oss$ has the \emph{double commutant expectation property} if, for every unital complete order embedding 
map $\kappa:\oss\rightarrow\B(\H)$, there exists a unital completely positive linear map $\phi:\B(\H)\rightarrow\left(\kappa(\oss)\right)''$
such that $\phi\circ\kappa=\kappa$. Using the work developed in \cite{kavruk--paulsen--todorov--tomforde2013}, one can show that, for the
Toeplitz operator system $C(S^1)^{(n)}$, the double commutant expectation property is equivalent to the assertion that
$C(S^1)^{(n)}\omin\B=C(S^1)^{(n)}\omax\B$ for every unital C$^*$-algebra $\B$. (The equality 
$C(S^1)^{(n)}\omin\B=C(S^1)^{(n)}\omax\B$
is known to hold for all nuclear C$^*$-algebras $\B$ \cite{kavruk--paulsen--todorov--tomforde2011}.)
The case $n=2$ provides some insight into the general situation. To show that $C(S^1)^{(2)}$ has the double commutant expectation property, it is
sufficient to prove that every ucp map $\phi:C(S^1)^{(2)}\rightarrow \B(\H)$ has a ucp extension $\tilde\phi:\M_2(\mathbb C)\rightarrow\left(\phi(C(S^1)^{(2)})\right)''$.
By Choi's criterion for complete positivity, the extension $\tilde\phi$ must have the property that
\[
\left[ \begin{array}{cc} \tilde\phi(e_{11}) & \tilde\phi(e_{12}) \\ \tilde\phi(e_{21}) & \tilde\phi(e_{22}) \end{array}\right] =
\left[ \begin{array}{cc} \tilde\phi(e_{11}) &  \phi(r_{-1}) \\  \phi(r_1) & \tilde\phi(e_{22}) \end{array}\right]
\]
is positive. Because $1=\phi(r_0)=\tilde\phi(e_{11})+\tilde\phi(e_{22})$, the question becomes: given $\phi(r_1)=b$, then is there
a positive operator $a$ and a ucp map $\tilde\phi:\M_2(\mathbb C)\rightarrow\left(\phi(C(S^1)^{(2)})\right)''$ such that
$a=\tilde\phi(e_{11})$, $\tilde\phi(e_{22})=1-a$, and $b=\tilde\phi(r_1)$? Ando's theorem \cite{ando1973,argerami2019,arveson1972} 
gives an affirmative answer. Indeed, since the numerical radius
of the matrix $r_1$ is $1/2$, the numerical radius of $\phi(r_1)$ is at most $1/2$; hence, by Ando's theorem,
there does indeed exist a positive operator $a$ such that the map
$\tilde\phi:\M_2(\mathbb C)\rightarrow\B(\H)$ in which $\tilde\phi(e_{11})=a$, $\tilde\phi(e_{22})=1-a$, and $\tilde\phi(e_{21})=\phi(r_1)$ is completely positive.
Some additional arguments can be made (see \cite[Theorem 5.4]{farenick--kavruk--paulsen--todorov2014})
to show that $a$ is in the von Neumann algebra $\left(\phi(C(S^1)^{(2)})\right)''$, thereby 
proving that $C(S^1)^{(2)}$ has the double commutant expectation property.
For $n>2$, a rather sophisticated version of Ando's numerical radius theorem in several variables would need to hold for $C(S^1)^{(n)}$ 
to exhibit the double commutant expectation property. 

The geometry of the positive cone of $C(S^1)^{(n)}$ is analysed in considerable detail in \cite{connes-vansuijlekom2020} and further reflection upon the results in that
work may reveal previously unobserved phenomena concerning positive Toeplitz matrices. 

\section*{Acknowledgement}

I am grateful to Mizanur Rahaman for drawing my attention to the works of Gurvits \cite{gurvits--barnum2002} 
and Ando \cite{ando2004} on the separability of positive Toeplitz matrices, 
and to the referee for a careful review of the manuscript.
  

\end{document}